\documentclass{amsart}

\usepackage{amssymb,amsmath,amscd,amsthm}
\usepackage[OT2,T1]{fontenc}
\newcommand\textcyr[1]{{\fontencoding{OT2}\fontfamily{wncyr}\selectfont #1}}

\numberwithin{equation}{section}

\date{\today}

\newcommand{\Z}{{\mathbb Z}}
\newcommand{\R}{{\mathbb R}}
\newcommand{\C}{{\mathbb C}}

\newcommand{\T}{{\mathbb T}}

\newcommand{\D}{{\mathbb D}}

\newcommand{\PP}{{\mathbb P}}


\newtheorem{theorem}{Theorem}[section]
\newtheorem{lemma}[theorem]{Lemma}
\newtheorem{prop}[theorem]{Proposition}
\newtheorem{coro}[theorem]{Corollary}

\newtheorem*{klc}{Kotani-Last Conjecture}

\theoremstyle{definition}
\newtheorem{remark}[theorem]{Remark}
\theoremstyle{definition}
\newtheorem{defi}[theorem]{Definition}

\sloppy

\renewcommand{\Im}{\mathrm{Im} \, }
\renewcommand{\Re}{\mathrm{Re} \, }

\begin{document}

\title[Counterexamples to the Kotani-Last Conjecture]{Counterexamples to the Kotani-Last Conjecture for Continuum Schr\"odinger Operators via Character-Automorphic Hardy Spaces}

\author[D.\ Damanik]{David Damanik}

\address{Department of Mathematics, Rice University, Houston, TX~77005, USA}

\email{damanik@rice.edu}

\thanks{D.\ D.\ was supported in part by NSF grant DMS--1067988.}

\author[P.\ Yuditskii]{Peter Yuditskii}

\address{Abteilung f\"ur Dynamische Systeme und Approximationstheorie, Johannes Kepler Universit\"at Linz, A-4040 Linz, Austria}

\email{Petro.Yudytskiy@jku.at}

\thanks{P.\ Yu.\ was supported by the Austrian Science Fund FWF, project no: P25591-N25.
}

\begin{abstract}
The Kotani-Last conjecture states that every ergodic operator in one space dimension with non-empty absolutely continuous spectrum must have almost periodic coefficients. This statement makes sense in a variety of settings; for example, discrete Schr\"odinger operators, Jacobi matrices, CMV matrices, and continuum Schr\"odinger operators.

In the main body of this paper we show how to construct counterexamples to the Kotani-Last conjecture for continuum Schr\"odinger operators by adapting the approach developed by Volberg and Yuditskii to construct counterexamples to the Kotani-Last conjecture for Jacobi matrices. This approach relates the reflectionless operators associated with the prescribed spectrum to a family of character-automorphic Hardy spaces and then relates the shift action on the space of operators to the resulting action on the associated characters. The key to our approach is an explicit correspondence between the space of continuum reflectionless Schr\"odinger operators associated with a given set and the space of reflectionless Jacobi matrices associated with a derived set. Once this correspondence is established we can rely to a large extent on the previous work of Volberg and Yuditskii to produce the resulting action on the space of characters. We analyze this action and identify situations where we can observe absolute continuity without almost periodicity.

In the appendix we show how to implement this strategy and obtain analogous results for extended CMV matrices.
\end{abstract}

\dedicatory{\textcyr{...ne stranen kto zh?\\
 Griboedov A. S. ``Gore ot uma''}}

\maketitle

\section{Introduction}\label{s.1}

\subsection{Motivation and Background}

One-dimensional continuum Schr\"odinger operators with ergodic potentials are generated as follows. Suppose $(\Omega, \mathcal{F}, \mu)$ is a probability space, $\{ \mathcal{T}_x \}_{x \in \R}$ is a $\mu$-ergodic one-parameter group of transformations of $\Omega$, and $f : \Omega \to \R$ is measurable and (for simplicity) bounded, we consider potentials
$$
q_\omega(x) = f(\mathcal{T}_x \omega)
$$
and Schr\"odinger operators
$$
[L_{q_\omega} y](x) = - y''(x) + q_\omega(x) y(x)
$$
in $L^2(\R)$.

This class of potentials contains many important special cases, such as periodic potentials, almost periodic potentials, and random potentials. Moreover, there are several general results that hold for all operator families $\{ L_{q_\omega} \}_{\omega \in \Omega}$ generated in this way. For example, the spectrum and the spectral type are almost surely constant, that is, there are sets $\Sigma, \Sigma_\mathrm{ac}, \Sigma_\mathrm{sc}, \Sigma_\mathrm{pp}$ and a set $\Omega_0 \subseteq \Omega$ with $\mu(\Omega_0) = 1$ such that $\sigma(L_{q_\omega}) = \Sigma$ and $\sigma_\bullet(L_{q_\omega}) = \Sigma_\bullet$, $\bullet \in \{ \mathrm{ac}, \mathrm{sc}, \mathrm{pp} \}$, for every $\omega \in \Omega_0$.

It is well known that all periodic potentials lead to $\Sigma_\mathrm{sc} = \Sigma_\mathrm{pp} = \emptyset$, whereas all random potentials lead to $\Sigma_\mathrm{ac} = \Sigma_\mathrm{sc} = \emptyset$. Moreover, the existing results make the following tendency evident: the stronger the disorder in the potentials, the more singular the spectral measures of $H_\omega$ become. That is, as one moves away from the periodic (= completely ordered) case, one should expect that one eventually transitions into purely singular spectrum, that is, $\Sigma_\mathrm{ac} = \emptyset$.

There is indeed absolute continuity beyond periodicity. For example, purely absolutely continuous spectrum can be observed for certain classes of limit-periodic potentials \cite{AS81, C81, PT88}, that is, potentials that may be uniformly approximated by periodic potentials. This fits nicely with the overall picture, as these potentials are the closest a potential can be to being periodic without actually being periodic. Going a bit further, purely absolutely continuous spectrum can also be shown for quasi-periodic potentials with Diophantine frequency vectors and small analytic sampling functions; compare, for example, \cite{DG14, DS75, E92}.

Both limit-periodic potentials and quasi-periodic potentials belong to the class of almost periodic potentials, where in the description above, $\Omega$ is a compact abelian group, $\mu$ is the normalized Haar measure on this group, $\{ \mathcal{T}_x \}_{x \in \R}$ is a translation flow, and $f$ is continuous. Note that a bounded continuous function $q : \R \to \R$ is called almost periodic if any sequence of translates, $q_k (\cdot) = q(x - x_k)$, contains a subsequence that converges uniformly. This description is equivalent to the formulation above in the sense that any such function belongs to some almost periodic family $\{ q_\omega \}_{\omega \in \Omega}$, and for every almost periodic family $\{ q_\omega \}_{\omega \in \Omega}$, every $q_\omega$ is almost periodic.

In light of the existing examples with non-trivial absolutely continuous spectrum, the following problem became quite popular in the community.

\begin{klc}
If the family $\{ L_{q_\omega} \}_{\omega \in \Omega}$ is such that $\Sigma_\mathrm{ac} \not= \emptyset$, then the potentials $\{ q_\omega \}_{\omega \in \Omega}$ are almost periodic.
\end{klc}

This question is attributed to Kotani and Last since both of them were heavily interested in this problem and advocated it in the community. The problem has appeared in print many times, for example in surveys by Damanik~\cite{D07} and Jitomirskaya~\cite{J07}, and in Kotani-Krishna~\cite{KK88} and Simon~\cite{S07}.

Belief in the Kotani-Last conjecture was nurtured in a least two ways. On the one hand, as indicated above, at the time \cite{D07, J07, KK88, S07} were written, all known examples of ergodic families $\{ L_{q_\omega} \}_{\omega \in \Omega}$ with $\Sigma_\mathrm{ac} \not= \emptyset$ were indeed almost periodic. On the other hand, Kotani has shown (in the context of what is nowadays called Kotani theory) that $\Sigma_\mathrm{ac} \not= \emptyset$ implies that the potentials are deterministic, that is, the past (= the restriction to the left half-line) determines the future (= the restriction to the right half-line). Indeed, they are reflectionless on the set $\Sigma_\mathrm{ac}$, and the relation between the half-line Weyl-Titchmarsh functions this entails allows one to determine the future from the past; see \cite{K84, K97}.

In addition, one may consider the following instructive simple case. If the base flow is a simple translation flow on the circle, then there is indeed a transition to purely singular spectrum as soon as one gives up the continuity of the sampling function, and this is precisely where almost periodicity is lost \cite{DK05}. The paper \cite{DK05} is an extension of Kotani \cite{K89} who had looked at potentials taking finitely many values. Moreover, by approximating continuous sampling functions with discontinuous ones, one can show that the absence of absolutely continuous spectrum is generic within the space of continuous sampling functions \cite{AD05}.\footnote{To be precise, the papers mentioned in this paragraph consider discrete Schr\"odinger operators. The ideas developed in these papers have counterparts in the continuum case.}

In fact, Kotani theory exists in several other settings. For example, it was worked out for discrete Schr\"odinger operators by Simon \cite{S83}, for a class of operators including Jacobi matrices by Minami \cite{M86}, and for extended CMV matrices by Geronimo \cite{G93} and Geronimo-Teplyaev \cite{GT94}. In all of these settings the Kotani-Last conjecture makes sense and takes essentially the same form. Namely, if there is non-trivial absolutely continuous spectrum, then the coefficients must be almost periodic. The supporting evidence is also two-fold in these other settings (concrete positive examples, and a determinism property stemming from reflectionlessness), just as in the continuum Schr\"odinger context.

In two independent (and almost simultaneously released) recent papers, the Kotani-Last conjecture was disproved in two of these scenarios. Namely, Avila \cite{A14} worked with successive deformations of periodic potentials to construct non-almost periodic examples for which the absolutely continuous spectrum is non-empty. He does this for both continuum and discrete Schr\"odinger operators. On the other hand, Volberg and Yuditskii \cite{VY14} considered ergodic families of Jacobi matrices and employed an approach based on inverse spectral theory to characterize, within a certain class of Jacobi matrices, those families that are not almost periodic and do have (purely) absolutely continuous spectrum.

Thus, the Kotani-Last conjecture has been shown to fail in the settings of continuum and discrete Schr\"odinger operators as well as Jacobi matrices (of course, the failure within the class of discrete Schr\"odinger operators implies the failure within the class of Jacobi matrices). Moreover, there are two completely different ways of establishing this, one based on direct spectral analysis, and another one based on inverse spectral theory.

Since the continuum Schr\"odinger operator setting is the one Kotani originally worked in, it is an important question whether the Volberg-Yuditskii approach can be used in this setting. Our main purpose in this paper is to show that this is possible. That is, akin to Volberg and Yuditskii in \cite{VY14}, we will consider a certain class of ergodic families of continuum Schr\"odinger operators with purely absolutely continuous spectrum, and characterize those families for which the potentials are not almost periodic (and of course also show that the latter class is non-empty).

\subsection{Setting and Main Result}

Let us describe the precise setting we will work in. We will consider subsets $E$ of the real line of the form
\begin{equation}\label{e.eform}
E = [-1, \infty) \setminus \bigcup_{k = 1}^\infty (\lambda_k^-, \lambda_k^+),
\end{equation}
where the parameters $\lambda^\pm_k$ satisfy
\begin{equation}\label{e.lambdacond}
\lambda_0^+ := -1  < \lambda_1^- < \lambda_1^+ < \lambda_2^- < \lambda_2^+ < \cdots < \infty =: \lambda_0^-.
\end{equation}
We require that $\sup_{k \ge 1} \lambda_k^+ < \infty$, and hence modulo energy reparametrization we assume for the sake of simplicity and without loss of generality that
\begin{equation}\label{e.econtainshalfline}
\sup_{k \ge 1} \lambda_k^+ = 0,
\end{equation}
that is, $E$ contains the right half-line and $0$ is accumulated from the left by gaps of $E$. Let $\mathcal{D} = \bar \C \setminus E$.

We give a parametric description of our class of domains by means of special conformal mappings. Consider the so called comb domain $\Pi=\Pi (\{\omega_k, h_k\}_{k=1}^\infty)$,
\begin{equation}\label{e.Thetadefinition}
\Pi:= \{ w \in \C : \Re w > 0, \; \Im w > 0 \} \setminus \{ \omega_k + i y : 0 < y \le h_k : k \ge 1 \},
\end{equation}
with frequencies $\{ \omega_k \}_{k \ge 1} \subset (0,\infty)$, $\omega_1<\dots<\omega_k<\omega_{k+1}<\dots$, and heights $\{ h_k \}_{k \ge 1} \subset (0, \infty)$ such that
\begin{equation}\label{e.discreteomegaks}
\lim_{k \to \infty} \omega_k =: \omega_* < \infty
\end{equation}
and
\begin{equation}\label{e.widomcondition}
\sum_{k \ge 1} h_k < \infty.
\end{equation}

By the Riemann mapping theorem, $\Pi$ can be mapped conformally on the upper half-plane. The inverse  conformal mapping $\Theta : \C_+ \to \Pi$ is defined uniquely by the following normalization,
\begin{equation}\label{e.Thetanormalization}
\Theta(\lambda) \sim \sqrt{\lambda} \text{ as } \lambda \to -\infty, \quad \Theta(-1) = 0.
\end{equation}
$\Theta$ has a continuous extension to the closed half-plane,  and we define  $E=\Theta^{-1}(\R_+)$. Evidently, the conditions \eqref{e.eform} and \eqref{e.lambdacond} hold automatically. To satisfy \eqref{e.econtainshalfline} we consider a rescaling of the comb, if necessary.

Let us point out that the condition \eqref{e.widomcondition} expresses the fact that
$
\mathcal{D}
$
is a Widom domain, see below.

Our last major assumption is the independence condition on the frequencies. Let
$$
\T^\infty=\{\alpha=\{\alpha_k\}_{k=1}^\infty:\ \alpha_k\in\T\},
$$
where $\T$ denotes the unit circle, $\T = \{ w \in \C : |w|=1 \}$. We consider the translation flow
\begin{equation}\label{pyu1}
\mathcal S_{\ell} \alpha = \{ \alpha_k e^{-2i\omega_k \ell} \}_{k=1}^\infty, \ \alpha \in \T^\infty, \; \ell \in \R.
\end{equation}
We say that the frequencies are independent if the closure of the trajectory $\{\mathcal S_\ell\alpha\}_{\ell\in\R}$ coincides with the whole torus,
\begin{equation}\label{e.frequenciesindependent}
\overline{\{\mathcal S_\ell\alpha : \ell\in\R\}} = \T^\infty,
\end{equation}
for some, and therefore for all, $\alpha\in\T^\infty$. Evidently this condition is stable with respect to a rescaling of $\Pi$.

Throughout the paper we will assume that a set $E$ satisfying \eqref{e.discreteomegaks}, \eqref{e.widomcondition} and \eqref{e.frequenciesindependent} is given. We want to study continuum one-dimensional Schr\"odinger operators
$$
L_q = - \frac{d^2}{dx^2} + q
$$
in $L^2(\R)$ whose spectrum is given by the set $E$ and which are reflectionless on $E$, that is,
$$
\lim_{\varepsilon \downarrow 0} m_+(\lambda + i \varepsilon) = - \lim_{\varepsilon \downarrow 0} \overline{m_-(\lambda + i \varepsilon)} \quad \text{for almost every } \lambda \in E,
$$
where $m_\pm$ is the Weyl-Titchmarsh function of the half-line restriction of $L_q$ to $\R_\pm$ (we recall the definition of $m_\pm$ in Section~\ref{s.2}). This class of operators has been studied, for example, in \cite{C89, K97, K08, K14}.

Let us point out that in our setting every such operator $L_q$ has purely absolutely continuous spectrum; see, for example, \cite{PR09}. We set
$$
q(E) := \{ q \in C_b(\R) : \sigma(L_q) = E, \; L_q \text{ is reflectionless on } E \}.
$$

It turns out that it is beneficial also to consider the following conformal map. Choose
\begin{equation}\label{e.lambda*choice}
\lambda_* < -1
\end{equation}
and consider the change of variables
\begin{equation}\label{e.changeofvariables}
z = \frac{1}{\lambda - \lambda_*}.
\end{equation}
Throughout this paper, whenever in some formula or expression both $\lambda$ and $z$ appear, they are linked via \eqref{e.changeofvariables}. Note that \eqref{e.changeofvariables} maps the upper half-plane to the lower half-plane.

The change of variables \eqref{e.changeofvariables} transforms $E$ and $\mathcal{D}$, respectively, into
$$
\tilde E = \left\{ \frac{1}{\lambda-\lambda_* } : \lambda \in E \right\}, \quad \tilde{\mathcal{D}} = \left\{ \frac{1}{\lambda-\lambda_*} : \lambda \in \mathcal{D} \right\}.
$$
In particular, the gap $(\lambda_k^-,\lambda_k^+)$ of $E$ corresponds to the gap $(z_k^-,z_k^+)$ of $\tilde E$, where $z_k^\pm = \frac{1}{\lambda_k^\mp - \lambda_*}$. Let $\D / \Gamma \simeq \tilde{\mathcal{D}}$ be a uniformization of $\tilde{\mathcal{D}}$, that is, $\Gamma$ is a Fuchsian group and $\mathfrak{z} : \D \to \tilde{\mathcal{D}}$ is a meromorphic function with $\mathfrak{z} \circ \gamma = \mathfrak{z}$ for every $\gamma \in \Gamma$ and such that
\begin{itemize}

\item $\forall z \in \tilde{\mathcal{D}} \; \exists \zeta \in \D : \mathfrak{z}(\zeta) = z$,

\item $\mathfrak{z}(\zeta_1) = \mathfrak{z}(\zeta_2) \Rightarrow \exists \gamma \in \Gamma : \zeta_1 = \gamma(\zeta_2)$.

\end{itemize}
The Fuchsian group $\Gamma$ is equivalent to the fundamental group $\Gamma_{\tilde{\mathcal{D}}}$ of $\tilde{\mathcal{D}}$. We denote its group of characters by $\Gamma^*$. We normalize $\mathfrak{z}$ by $\mathfrak{z}(0) = \infty$ and $(\zeta\mathfrak z)(0)>0$. Composing the maps, we obtain the uniformization $\D / \Gamma \simeq \mathcal{D}$ via $\lambda_* + \frac{1}{\mathfrak{z}} : \D \to \mathcal{D}$.

Recall that a meromorphic function $f$ in the disk $\D$ is said to be of bounded characteristic if it can be represented as the ratio of two bounded analytic functions, $f = f_1/f_2$. The function $f$ is of Smirnov class if in addition $f_2$ is an outer function. Every meromorphic function $F$ in $\tilde{\mathcal{D}}$ defines an automorphic function $f = F \circ \mathfrak{z}$ (i.e., $f \circ \gamma = f$ for every $\gamma \in \Gamma$) and vice versa. We say that $F$ belongs to the Smirnov class $N_+(\tilde{\mathcal{D}})$ if $f = F \circ \mathfrak{z}$ is of Smirnov class in $\D$. Note that Lebesgue measure on $\T$ corresponds to the harmonic measure on $\tilde E$. Thus, every $F \in N_+(\tilde{\mathcal{D}})$ has boundary values for Lebesgue almost all $x \in \tilde E$.

\begin{defi}\label{d.dct}
We say that $E$ satisfies DCT if for every $F \in N_+(\tilde{\mathcal{D}})$ with
$$
\oint_{\partial \tilde{\mathcal{D}}} |F(x)| \, |dx| < \infty \quad \text{ and } F(\infty) = 0,
$$
we have
$$
\frac{1}{2 \pi i} \oint_{\partial \tilde{\mathcal{D}}} \frac{F(x)}{x - z} \, dx = F(z)
$$
for every $z \in \tilde{\mathcal{D}}$.
\end{defi}

We refer the reader to Hasumi \cite{H83} for an extensive discussion of this condition on the domain. It is important to note that the condition DCT does not hold for all Widom domains.

\bigskip

We are now able to state our main theorem.

\begin{theorem}\label{t.main}
Consider a comb domain $\Pi(\{\omega_k,h_k\}_{k=1}^\infty)$ subject to the assumptions \eqref{e.discreteomegaks}, \eqref{e.widomcondition}, and \eqref{e.frequenciesindependent}. Suppose $E$ is the image of $\R_+$ under {\rm (}the continuous extension to the closure of{\rm )} the conformal map $\Theta^{-1}$ sending the comb domain to the upper half-plane. Then we have the following dichotomy:
\begin{itemize}

\item[{\rm (a)}] If $E$ satisfies DCT, then every $q \in q(E)$ is almost periodic.

\item[{\rm (b)}] If $E$ does not satisfy DCT, then every $q \in q(E)$ is not almost periodic.

\end{itemize}
\end{theorem}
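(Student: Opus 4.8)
\emph{Strategy.} My plan is to reduce the entire statement to the behaviour of one map, the generalized Abel map $\mathfrak A:\Gamma^*\to q(E)$. First I would use the change of variables \eqref{e.changeofvariables} to pass from $E$ to the bounded set $\tilde E$ and its domain $\tilde{\mathcal D}\simeq\D/\Gamma$; the point of this step is that $\tilde E$ is of the same ``comb accumulating at a point'' type that Volberg and Yuditskii treat for Jacobi matrices, so that the character-automorphic Hardy space apparatus on $\D/\Gamma$ from \cite{VY14} becomes available. Concretely I would set up an explicit correspondence sending each $q\in q(E)$, via its half-line Weyl functions $m_\pm$ transported by $z=1/(\lambda-\lambda_*)$ and a fixed M\"obius adjustment absorbing the $\lambda\to\infty$ square-root behaviour, to a reflectionless Jacobi matrix with spectrum $\tilde E$, and then invoke \cite{VY14} to obtain the bijective parametrization of that Jacobi isospectral class by $\Gamma^*$ through reproducing kernels of the spaces $H^2(\alpha)$. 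Composing, one gets a bijection $\mathfrak A:\Gamma^*\to q(E)$. From the Hardy space theory I would extract two facts: (i) the map $\mathfrak A^{-1}$, reading the character off the operator, is \emph{always} continuous from $q(E)$ with the uniform topology on potentials to $\Gamma^*$; and (ii) $\mathfrak A$ \emph{itself} is continuous for the sup norm on $C_b(\R)$ \emph{if and only if $E$ satisfies DCT}. The final ingredient is dynamical: the shift $q\mapsto q(\cdot-\ell)$ is intertwined by $\mathfrak A$ with the linear flow $\mathcal S_\ell$ of \eqref{pyu1} on $\T^\infty\cong\Gamma^*$, the frequencies $\omega_k$ being precisely the comb parameters (harmonic measures of the bands at the distinguished point), and by \eqref{e.frequenciesindependent} this flow is minimal on $\Gamma^*$.

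\emph{Proof of (a).} Assume DCT. Then $\mathfrak A:\Gamma^*\to q(E)\subset C_b(\R)$ is continuous for the sup norm and equivariant, $\mathfrak A\circ\mathcal S_\ell=T_\ell\circ\mathfrak A$. Since $\Gamma^*$ is a compact group, $K:=\mathfrak A(\Gamma^*)$ is a $\|\cdot\|_\infty$-compact subset of $C_b(\R)$. For any $q=\mathfrak A(\alpha)\in q(E)$ we get $\{q(\cdot-\ell):\ell\in\R\}=\{\mathfrak A(\mathcal S_\ell\alpha):\ell\in\R\}\subseteq K$, so the set of translates of $q$ is precompact in $(C_b(\R),\|\cdot\|_\infty)$, which is the definition of almost periodicity. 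Hence every $q\in q(E)$ is almost periodic.

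\emph{Proof of (b).} Assume DCT fails and suppose, for contradiction, that some $q\in q(E)$ is almost periodic. Its hull $M:=\overline{\{q(\cdot-\ell):\ell\in\R\}}^{\,\|\cdot\|_\infty}$ is then $\|\cdot\|_\infty$-compact and $T_\ell$-invariant. Because translates of $q$ all have spectrum $E$ and satisfy the same reflectionlessness relation between $m_\pm$ on $E$, and because uniform convergence of potentials yields norm-resolvent convergence (hence Hausdorff convergence of spectra) together with locally uniform convergence of $m_\pm$ on $\C_+$, both properties persist under uniform limits; thus $M\subseteq q(E)$. Applying the always-continuous map $\mathfrak A^{-1}$, the set $A:=\mathfrak A^{-1}(M)\subseteq\Gamma^*$ is compact, nonempty and $\mathcal S_\ell$-invariant, so minimality of the flow forces $A=\Gamma^*$. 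Consequently $\mathfrak A^{-1}|_M:M\to\Gamma^*$ is a continuous bijection between compact Hausdorff spaces, hence a homeomorphism, and its inverse $\mathfrak A:\Gamma^*\to M\subset C_b(\R)$ is therefore continuous for the sup norm. By ingredient (ii) this forces $E$ to satisfy DCT, a contradiction. Hence no $q\in q(E)$ is almost periodic.

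\emph{Main obstacle.} All of the above rests on the construction in the first paragraph, and that is where the real work lies. The hardest part is the \emph{explicit correspondence between continuum reflectionless Schr\"odinger operators on $E$ and reflectionless Jacobi matrices on $\tilde E$}, together with the verification that under it the Schr\"odinger shift flow becomes exactly $\mathcal S_\ell$ with the comb frequencies $\omega_k$: one must track the evolution of $m_\pm(\lambda+i\varepsilon)$ under translation, rewrite it in the $z$-variable, match the $\lambda\to\infty$ (that is, $z\to0$) square-root asymptotics with the endpoint behaviour of a Jacobi $m$-function after a suitable normalization, and then check that the resulting motion on divisors integrates, through the uniformization $\D/\Gamma\simeq\tilde{\mathcal D}$, to a straight-line winding at rates $\omega_k$. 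Intertwined with this is the topological bookkeeping: one needs uniform convergence of potentials, norm-resolvent convergence of operators, and coordinatewise convergence of characters to be compatible in precisely the way used in (a) and (b), and one needs the sharp equivalence between continuity of $\mathfrak A$ in the sup norm and the DCT property of $E$, which is the continuum counterpart of the dichotomy isolated in \cite{VY14} and ultimately rests on the Hasumi--Widom circle of ideas \cite{H83}.
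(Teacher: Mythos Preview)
Your strategy has a structural error at its foundation: you assert that composing the Schr\"odinger--Jacobi correspondence with the Volberg--Yuditskii parametrization yields a \emph{bijection} $\mathfrak A:\Gamma^*\to q(E)$. It does not. The correct picture, which is exactly what the paper extracts from \cite{VY14}, is that $q(E)$ is in bijection with the collection of \emph{intermediate} Hardy spaces $\{H^2(\alpha)\}$, and the further map $H^2(\alpha)\mapsto\alpha\in\Gamma^*$ is surjective but in general \emph{not injective}. When DCT fails there are characters $\alpha$ for which $\check H^2(\alpha)\subsetneq\hat H^2(\alpha)$, and these give rise to \emph{distinct} potentials in $q(E)$ with the \emph{same} image in $\Gamma^*$. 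So your $\mathfrak A$ simply does not exist as a function $\Gamma^*\to q(E)$ in the non-DCT case, and your ingredient (ii) --- ``$\mathfrak A$ is sup-norm continuous iff DCT holds'' --- is not the correct dichotomy. The actual characterization is \eqref{e.dctcharacterization}: DCT holds iff the fiber set $\mathcal{NTF}$ is empty, i.e.\ iff $\pi:q(E)\to\Gamma^*$ is one-to-one.

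This breaks your proof of (b) at the line ``$\mathfrak A^{-1}|_M:M\to\Gamma^*$ is a continuous bijection''. Surjectivity onto $\Gamma^*$ does follow from minimality, but you have no argument for injectivity of $\pi|_M$; you are silently importing it from the global bijectivity you (incorrectly) claimed earlier. And even if $\pi|_M$ happened to be a homeomorphism, you would still need the implication ``some equivariant sup-norm-continuous section $\Gamma^*\to q(E)$ exists $\Rightarrow$ DCT'', which is neither obvious nor what \cite{VY14} proves. The paper instead argues (b) by directly exploiting the non-injectivity: choose $\alpha\in\mathcal{NTF}$, take the two distinct potentials $q(\hat H^2(\alpha))\neq q(\check H^2(\alpha))$ with $\pi$-image $\alpha$, and show that if both were almost periodic their orbits could be steered (via the minimal flow $\mathcal S_\ell$ and the full-measure set $\Gamma^*\setminus\mathcal{NTF}$) to a common uniform limit, contradicting $\|q(\hat H^2(\alpha))-q(\check H^2(\alpha))\|_\infty>0$. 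This produces one non-almost-periodic $q$, and a density/ergodicity argument then upgrades ``one'' to ``all''. Your part (a) is essentially the paper's argument and is fine once the DCT $\Rightarrow$ homeomorphism step is supplied (the paper cites \cite{SY95} for this).
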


Of course our main interest here is in sets $E$ for which the second alternative applies. We will show that such sets indeed exist and hence establish the following direct consequence of the main theorem.

\begin{coro}\label{c.main}
There exist continuous bounded ergodic potentials, which are not almost periodic, for which the associated Schr\"odinger operator has purely absolutely continuous spectrum.
\end{coro}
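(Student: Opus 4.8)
The plan is to obtain the corollary as a direct consequence of part~(b) of Theorem~\ref{t.main}. Recall that the main body of the paper establishes, for every comb domain $\Pi(\{\omega_k,h_k\}_{k=1}^\infty)$ subject to \eqref{e.discreteomegaks}, \eqref{e.widomcondition}, and \eqref{e.frequenciesindependent} and the resulting set $E$, that $q(E)$ is non-empty, that the shift $q(\cdot)\mapsto q(\cdot-x)$ maps $q(E)$ into itself and, via the identification of $q(E)$ with the character group $\Gamma^*$, is conjugate to a minimal translation flow on a compact abelian group and hence carries a (unique) ergodic invariant probability measure, and that $\sigma(L_q)=E$ with purely absolutely continuous spectrum for every $q\in q(E)$ (cf.\ \cite{PR09}). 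Since $q(E)\subset C_b(\R)$ by definition, it therefore suffices to produce a \emph{single} comb domain satisfying \eqref{e.discreteomegaks}, \eqref{e.widomcondition}, and \eqref{e.frequenciesindependent} whose associated set $E$ does \emph{not} satisfy DCT: for such $E$, Theorem~\ref{t.main}(b) shows that every $q\in q(E)$ fails to be almost periodic, and combined with the structural facts just listed this exhibits an ergodic family of continuous bounded non-almost-periodic potentials with purely absolutely continuous spectrum.

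So everything reduces to a construction in the geometry of comb domains. First I would invoke the fact, going back to Hasumi \cite{H83}, that there exist Widom domains for which the Direct Cauchy Theorem fails; the standard constructions realize this by letting the complementary components accumulate while their sizes shrink summably, tuned so that a suitable Blaschke-type sum attached to the critical points of the Green function of the domain diverges even though the sum of those critical values stays finite. The task is to produce such behavior inside the specific one-parameter family \eqref{e.Thetadefinition}: one selects $\omega_k\uparrow\omega_*<\infty$ as in \eqref{e.discreteomegaks} together with heights $h_k$ obeying $\sum_k h_k<\infty$ as in \eqref{e.widomcondition}, arranged so that the slits are ``too crowded'' near $\omega_*$ relative to their heights. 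The quantitative inequality on $\{\omega_k,h_k\}$ that forces the failure of DCT is read off from the Widom/Hasumi criterion after transporting it, through the conformal map $\Theta$ and the change of variables \eqref{e.changeofvariables}, into the coordinates of $\tilde{\mathcal D}$; here I expect a short but delicate estimate comparing the relevant defect sum with an explicit sum built from the spacings $\omega_{k+1}-\omega_k$ and the heights $h_k$.

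It remains to secure the independence condition \eqref{e.frequenciesindependent} at the same time. By the infinite-dimensional Kronecker theorem, \eqref{e.frequenciesindependent} holds precisely when the sequence $\{\omega_k\}$ is linearly independent over $\Q$, a genericity statement about $\{\omega_k\}$ alone, whereas the inequality producing the failure of DCT constrains only the asymptotics of the spacings and heights near $\omega_*$. Thus, starting from any $\{\omega_k,h_k\}$ as in the previous paragraph, one perturbs the $\omega_k$ by amounts that are summably small and much smaller than the local spacing --- preserving the ordering, the finiteness of $\omega_*$, the asymptotics of $\omega_{k+1}-\omega_k$, and keeping the $h_k$ fixed --- to land on a $\Q$-linearly independent sequence (the obstruction is a countable union of hyperplanes), after which a rescaling of $\Pi$ restores \eqref{e.econtainshalfline}; the failure of DCT is insensitive to such a perturbation and survives. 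Feeding the resulting set $E$ into Theorem~\ref{t.main}(b) produces the asserted examples. I expect the main obstacle to lie in the construction step --- exhibiting an explicit, verifiable condition on $\{\omega_k,h_k\}$ that is simultaneously compatible with $\omega_k\uparrow\omega_*$ and $\sum_k h_k<\infty$ and provably forces the DCT defect --- which amounts to a careful translation of Hasumi's characterization into the comb coordinates and a comparison of the relevant harmonic-measure and Green-function sums; adjoining \eqref{e.frequenciesindependent} afterwards is then a soft perturbation argument.
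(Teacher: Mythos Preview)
Your overall strategy---reduce to Theorem~\ref{t.main}(b) by exhibiting a single comb domain satisfying \eqref{e.discreteomegaks}, \eqref{e.widomcondition}, \eqref{e.frequenciesindependent} whose associated $E$ fails DCT---is exactly the paper's, and your structural remarks about $q(E)$ (non-emptiness, shift-invariance, unique ergodicity, purely a.c.\ spectrum) are correct. One small inaccuracy: you speak of ``the identification of $q(E)$ with $\Gamma^*$'', but in the case of interest (DCT fails) the generalized Abel map $\pi:q(E)\to\Gamma^*$ is \emph{not} a bijection---it is only almost-everywhere one-to-one (see \eqref{e.aentf} and Remark~\ref{r.measures}). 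This does not affect unique ergodicity, which is what you actually need, but the wording should be adjusted.

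Where you diverge from the paper is in the construction of $E$. You propose to build the example \emph{directly in comb coordinates}: translate Hasumi's DCT-failure criterion through $\Theta$ and \eqref{e.changeofvariables} into an explicit inequality on $\{\omega_k,h_k\}$, verify it for a concrete choice, and then perturb the $\omega_k$ to achieve rational independence while preserving the DCT defect. This would work in principle, but the ``short but delicate estimate'' you anticipate is genuine work that you have not carried out, and the stability of DCT-failure under your perturbation also needs justification. The paper bypasses all of this: it simply observes that both the Widom condition and DCT are \emph{conformally invariant}, so one can take the compact sets already constructed in \cite[Proposition~1.10]{VY14} (Widom but not DCT, with enough freedom in the gap endpoints to arrange \eqref{e.frequenciesindependent}) and transport them via the conformal map to obtain the desired $E$. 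This buys a one-line proof with no new estimates; your route, if completed, would give a more self-contained and explicit description in the comb parameters, at the cost of redoing the analytic work of \cite{VY14} in different coordinates.
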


This provides counterexamples to the Kotani-Last conjecture for continuum Schr\"odinger operators. We emphasize again that Artur Avila has already constructed counterexamples to this conjecture \cite{A14}, but our main point here is to show how one may obtain them using the framework suggested and developed by Volberg and Yuditskii in \cite{VY14}.

\subsection{Structure of the Paper}

In Section~\ref{s.2} we establish a connection between continuum Schr\"odinger operators that are reflectionless on $E$ and Jacobi matrices that are reflectionless on some derived set $\tilde E$. In Section~\ref{s.3} we summarize the main aspects of the work of Volberg and Yuditskii \cite{VY14} that are relevant to our work and then establish a correspondence between potentials $q$ in $q(E)$ and character-automorphic Hardy spaces $H^2(\alpha)$, by going through the Jacobi matrix that corresponds to each of them. This gives rise to the map $\pi: q \mapsto \alpha$. We also establish a unitary equivalence of the Schr\"odinger operator with potential $q$ in $L^2(\R)$ with a multiplication operator in a character-automorphic $L^2$-space with the property that $L^2(\R_+)$ corresponds to the character-automorphic Hardy space $H^2(\alpha)$; see Theorem~\ref{t,spectral}. These unitary maps give rise to a chain of subspaces which is a natural generalization of the classical Payley-Wiener chain of subspaces in Fourier analysis. In Section~\ref{s.4} we study the translation flow on the space of potentials $q$, and the induced flow on the space of characters $\alpha$. It turns out that the induced flow is an explicit translation flow on an infinite-dimensional torus. It therefore becomes essential to study the fibers $\pi^{-1}(\{ \alpha \})$. In Section~\ref{s.5} we show that almost periodicity fails precisely when there exist non-trivial fibers, that is, when $\pi$ is not 1-1. The latter condition is equivalent to the failure of DCT, and hence Theorem~\ref{t.main} follows. We also show that there exist sets $E$ for which there are non-trivial fibers, thus proving Corollary~\ref{c.main}. Finally, we work out the analog of these results for extended CMV matrices in the appendix. In particular, we formulate the Kotani-Last conjecture for extended CMV matrices and disprove it there, following the same overall strategy.

\section{Connecting Continuum Schr\"odinger Operators with Jacobi Matrices}\label{s.2}

Consider a continuum Schr\"odinger operator on the line,
$$
Ly = -y'' + qy,
$$
with $q \in C_b(\R)$. For $\lambda \in \C$, consider the associated differential equation
\begin{equation}\label{e.eve}
-u''(x) + q(x) u(x) = \lambda u(x).
\end{equation}
The solution space of \eqref{e.eve} is two-dimensional, and a standard basis of this space is given by $\{ u_1(\cdot,\lambda), u_2(\cdot,\lambda)\}$, where $u_1(\cdot,\lambda)$ and $u_2(\cdot,\lambda)\}$ solve \eqref{e.eve} and obey the initial conditions
$$
\begin{pmatrix} u_1(0,\lambda) & u_2(0,\lambda) \\ u'_1(0,\lambda) & u_2'(0,\lambda) \end{pmatrix} = \begin{pmatrix} 1 & 0 \\ 0 & 1 \end{pmatrix}.
$$
If $\lambda \not\in \sigma(L)$, then there are solutions $u_\pm(\cdot, \lambda)$ of \eqref{e.eve} so that $u_\pm$ is square-integrable near $\pm \infty$. In fact, using the constancy of the Wronskian one sees that these solutions are unique up to a multiplicative constant. Moreover, if $\lambda \not\in \R$, then $u_\pm$ cannot be a multiple of $u_2$ (for otherwise a self-adjoint half-line problem would have a non-real eigenvalue). This implies that the multiplicative constant for $u_\pm$ can be chosen in such a way that for suitable $m_\pm(\lambda) \in \C$, we have
$$
u_\pm(\cdot,\lambda) = u_1(\cdot,\lambda) \pm m_\pm(\lambda) u_2(\cdot,\lambda).
$$
With this normalization, the Wronskian of $u_+$ and $u_-$ turns out to be
$$
W(u_+(\cdot,\lambda), u_-(\cdot,\lambda)) = - (m_+(\lambda) + m_-(\lambda)).
$$

\begin{lemma}\label{l.mfunctionenergyvariation}
We have
\begin{align}
\label{e.mfunctionenergyvariation1}  \int_0^\infty u_+(x,\lambda_1) u_+(x,\lambda_2) \, dx & = \frac{m_+(\lambda_1) - m_+(\lambda_2)}{ \lambda_1 - \lambda_2 }, \\
\label{e.mfunctionenergyvariation2}  \int_{-\infty}^0 u_-(x,\lambda_1) u_-(x,\lambda_2) \, dx & =
 \frac{m_-(\lambda_1) - m_-(\lambda_2)}{ \lambda_1 - \lambda_2 }.
\end{align}
\end{lemma}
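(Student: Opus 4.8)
The plan is to integrate a Wronskian identity. Fix $\lambda_1 \neq \lambda_2$ with $\Im \lambda_1, \Im \lambda_2 \neq 0$ (so that $m_\pm(\lambda_j)$ are well-defined and finite, and the Weyl solutions are square-integrable at the respective ends), and abbreviate $\psi_j := u_+(\cdot,\lambda_j)$ for $j = 1,2$. Since $\psi_j$ solves \eqref{e.eve} with parameter $\lambda_j$, a direct computation shows that the (non-constant) Wronskian $W(x) := \psi_1(x)\psi_2'(x) - \psi_1'(x)\psi_2(x)$ satisfies $W'(x) = \psi_1(x)\psi_2''(x) - \psi_1''(x)\psi_2(x) = (\lambda_1 - \lambda_2)\,\psi_1(x)\psi_2(x)$. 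Integrating this over $[0,R]$ and letting $R \to \infty$ will give \eqref{e.mfunctionenergyvariation1}, once three routine points are checked: the value of $W$ at $0$, the vanishing of $W$ at $+\infty$, and the integrability of $\psi_1\psi_2$ on $\R_+$.

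For the boundary value at $0$: by the normalization $u_\pm = u_1 \pm m_\pm u_2$ together with the initial conditions imposed on $u_1, u_2$, we have $\psi_j(0) = 1$ and $\psi_j'(0) = m_+(\lambda_j)$, hence $W(0) = m_+(\lambda_2) - m_+(\lambda_1)$. For the behavior at $+\infty$: since $\lambda_j$ lies in the resolvent set, the Weyl solution $\psi_j$ belongs to $L^2(\R_+)$; then $\psi_j'' = (q-\lambda_j)\psi_j \in L^2(\R_+)$ because $q$ is bounded, so $\psi_j \in H^2(\R_+)$, and therefore $\psi_j$ and $\psi_j'$ are bounded and tend to $0$ at $+\infty$. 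This forces $W(x) \to 0$ as $x \to \infty$, and at the same time $\psi_1\psi_2 \in L^1(\R_+)$ by Cauchy--Schwarz, so the improper integral converges. Combining the three points, $-(m_+(\lambda_2) - m_+(\lambda_1)) = (\lambda_1 - \lambda_2)\int_0^\infty \psi_1\psi_2\,dx$, which is \eqref{e.mfunctionenergyvariation1} after dividing by $\lambda_1 - \lambda_2$.

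The identity \eqref{e.mfunctionenergyvariation2} can be obtained by the same argument on $\R_-$: writing $\phi_j := u_-(\cdot,\lambda_j)$, one reads off $\phi_j(0) = 1$ and $\phi_j'(0) = -m_-(\lambda_j)$ from $u_- = u_1 - m_- u_2$, so integrating $W'(x) = (\lambda_1-\lambda_2)\phi_1(x)\phi_2(x)$ over $[-R,0]$ and letting $R \to \infty$ (now $\phi_j \in H^2(\R_-)$ kills the boundary term at $-\infty$) yields $m_-(\lambda_1) - m_-(\lambda_2) = (\lambda_1-\lambda_2)\int_{-\infty}^0 \phi_1\phi_2\,dx$.

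The only step that is not purely mechanical is the decay and integrability input for the solutions $\psi_j$ (resp.\ $\phi_j$), i.e.\ the fact that a Weyl solution at a point of the resolvent set lies in $L^2$ of the relevant half-line, together with the elliptic regularity upgrade to $H^2$ that uses boundedness of $q$; this is standard Weyl--Titchmarsh theory. If one needs \eqref{e.mfunctionenergyvariation1}--\eqref{e.mfunctionenergyvariation2} for $\lambda_1$ or $\lambda_2$ in a spectral gap on $\R$, it follows from the non-real case by analytic continuation of both sides in each variable.
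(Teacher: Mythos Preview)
Your argument is correct and is essentially the same as the paper's: both differentiate the Wronskian of $u_+(\cdot,\lambda_1)$ and $u_+(\cdot,\lambda_2)$, integrate, and evaluate the boundary term at $0$ using the normalization $u_\pm = u_1 \pm m_\pm u_2$. The only difference is that you spell out the justification for the vanishing of the Wronskian at $\pm\infty$ (via $\psi_j \in H^2$ from $q$ bounded and $\psi_j \in L^2$), which the paper leaves implicit.
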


\begin{proof}
Notice that
\begin{align*}
\big( u_+(x,\lambda_1) & u_+'(x,\lambda_2) - u_+'(x,\lambda_1) u_+(x,\lambda_2) \big)' \\
& = u_+(x,\lambda_1) u_+''(x,\lambda_2) - u_+''(x,\lambda_1) u_+(x,\lambda_2) \\
& = u_+(x,\lambda_1) (q(x) - \lambda_2) u_+(x,\lambda_2) - (q(x) - \lambda_1) u_+(x,\lambda_1) u_+(x,\lambda_2) \\
& = ( \lambda_1 - \lambda_2 ) u_+(x,\lambda_1) u_+(x,\lambda_2).
\end{align*}
Integrating from $0$ to $\infty$, we find
$$
( \lambda_1 - \lambda_2 ) \int_0^\infty u_+(x,\lambda_1) u_+(x,\lambda_2) \, dx = - u_+(0,\lambda_1) u_+'(0,\lambda_2) + u_+'(0,\lambda_1) u_+(0,\lambda_2),
$$
which implies \eqref{e.mfunctionenergyvariation1} due to the initial values of $u_+$. The proof of \eqref{e.mfunctionenergyvariation2} is analogous.
\end{proof}

Moreover, the resolvent $(L-\lambda)^{-1}$ is an integral operator with kernel
\begin{equation}\label{e.greensfunction}
G(x,y;\lambda) = - \frac{u_+(\max\{x,y\},\lambda) u_-(\min\{x,y\},\lambda)}{m_+(\lambda) + m_-(\lambda)}.
\end{equation}

Since $q$ is bounded,  the whole-line and the half-line spectra are bounded from below. We fix a real $\lambda_*$ that lies below all these spectra. Then we still have solutions $u_\pm(\cdot,\lambda_*)$, which are now real-valued, with the desired square-integrability properties as above. The expressions above continue to hold in the way stated, except \eqref{e.mfunctionenergyvariation1}--\eqref{e.mfunctionenergyvariation2}, which become
\begin{equation}\label{e.normofupreal}
\int_0^\infty u_+(x,\lambda_*)^2 \, dx = m_+'(\lambda_*)
\end{equation}
and
\begin{equation}\label{e.normofumreal}
\int_{-\infty}^0 u_-(x,\lambda_*)^2 \, dx = m_-'(\lambda_*).
\end{equation}

Next consider the decomposition $L^2(\R) = L^2(\R_-) \oplus L^2(\R_+)$ and the induced decomposition of $A := (L - \lambda_*)^{-1}$. Let us compute the off-diagonal terms.

\begin{lemma}
Set
$$
e_\pm = \frac{\chi_{\R_\pm} u_\pm(\cdot,\lambda_*)}{\|\chi_{\R_\pm} u_\pm\|}
$$
and
\begin{equation}\label{e.a0def}
a_0 = - \frac{\left( m_+'(\lambda_*) m_-'(\lambda_*) \right)^{1/2}}{m_+(\lambda_*) + m_-(\lambda_*)}.
\end{equation}
Then, we have
\begin{equation}\label{e.Adecomposition}
A = \begin{pmatrix} A_{-} & 0 \\ 0 & A_{+} \end{pmatrix} + a_0 \langle\,  \cdot \, , e_+ \rangle \, e_-(x) + a_0
\langle \,\cdot\, ,e_- \rangle \, e_+(x).
\end{equation}
\end{lemma}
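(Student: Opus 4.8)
The statement concerns the block decomposition of $A = (L - \lambda_*)^{-1}$ with respect to $L^2(\R) = L^2(\R_-) \oplus L^2(\R_+)$, and the claim is that the off-diagonal part is rank one on each side, with the specific coefficient $a_0$ and the specific unit vectors $e_\pm$. The plan is to compute the relevant matrix elements directly from the Green's function formula \eqref{e.greensfunction}.

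First I would write $A g = \int_\R G(x,y;\lambda_*) g(y)\, dy$ and split the integral according to whether $y < 0$ or $y > 0$. For the off-diagonal contribution, take $g$ supported in $\R_+$ and evaluate $A g$ on $\R_-$: here $x < 0 \le y$, so $\max\{x,y\} = y$ and $\min\{x,y\} = x$, giving kernel $-\frac{u_+(y,\lambda_*) u_-(x,\lambda_*)}{m_+(\lambda_*)+m_-(\lambda_*)}$. This factors as a constant times $u_-(x,\lambda_*)$ (a function of $x$ alone, supported effectively on $\R_-$ after restriction) times $u_+(y,\lambda_*)$ (a function of $y$ alone). Hence $\chi_{\R_-} A \chi_{\R_+} g = -\frac{1}{m_+(\lambda_*)+m_-(\lambda_*)} \langle g, \chi_{\R_+} u_+ \rangle\, \chi_{\R_-} u_-(\cdot,\lambda_*)$, where the inner product is over $\R$ but picks up only the $\R_+$ part since $g$ is supported there. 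Normalizing by $\|\chi_{\R_+} u_+\|$ and $\|\chi_{\R_-} u_-\|$ to pass to the unit vectors $e_\pm$ produces the coefficient $-\frac{\|\chi_{\R_+}u_+\|\,\|\chi_{\R_-}u_-\|}{m_+(\lambda_*)+m_-(\lambda_*)}$, which by \eqref{e.normofupreal} and \eqref{e.normofumreal} equals exactly $a_0$ as defined in \eqref{e.a0def}. The symmetric computation with $g$ supported in $\R_-$ and $Ag$ evaluated on $\R_+$ gives the other off-diagonal term with the same coefficient (self-adjointness of $A$, since $\lambda_*$ is real, also forces this symmetry).

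For the diagonal blocks, I would identify $A_\pm$ as $(L_\pm - \lambda_*)^{-1}$ where $L_\pm$ is the half-line restriction of $L$ to $\R_\pm$ with a Dirichlet (or appropriate) boundary condition at $0$; the relevant point is simply that the restriction of the whole-line Green's function to the quadrant $x, y$ both in $\R_\pm$ is the half-line Green's function, which follows from the standard construction of $G$ from the Weyl solutions. One should be a little careful here about the boundary condition at the origin, since $A$ restricted to $L^2(\R_+)$ is not literally a resolvent of a half-line operator unless one accounts for how $u_-(0,\lambda_*)$ enters; the cleanest route is to note that \eqref{e.Adecomposition} is an identity to be verified, so it suffices to subtract the rank-one pieces already computed and check that what remains is block diagonal, then simply \emph{define} $A_\pm$ to be those diagonal blocks. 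With that reading the lemma is essentially a bookkeeping rearrangement of \eqref{e.greensfunction}.

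The main obstacle, such as it is, is purely notational: keeping straight the normalizations of $e_\pm$ versus $u_\pm$, and verifying that the product of norms $\|\chi_{\R_+}u_+\|\cdot\|\chi_{\R_-}u_-\| = (m_+'(\lambda_*) m_-'(\lambda_*))^{1/2}$ matches the numerator in \eqref{e.a0def} with the correct sign. There is no analytic difficulty: $\lambda_*$ lies below all spectra, the solutions $u_\pm(\cdot,\lambda_*)$ are real and genuinely $L^2$ near $\pm\infty$, $m_+(\lambda_*) + m_-(\lambda_*) \neq 0$ (it is the negative Wronskian, which cannot vanish since $\lambda_* \notin \sigma(L)$), and all integrals converge absolutely. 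So the proof is a direct substitution, and I would present it in three or four lines after setting up the split of the Green's function integral.
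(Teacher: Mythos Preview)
Your proposal is correct and follows essentially the same route as the paper: compute the off-diagonal block by applying $A$ to a function supported in $\R_+$, evaluate on $\R_-$ using the Green's function formula \eqref{e.greensfunction}, observe the rank-one factorization, and normalize via \eqref{e.normofupreal}--\eqref{e.normofumreal} to identify the coefficient as $a_0$. The paper does not explicitly define $A_\pm$ beyond declaring them to be the diagonal blocks, so your remark that one simply \emph{defines} $A_\pm$ as what remains after subtracting the rank-one pieces is exactly in line with the paper's treatment.
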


\begin{proof}
Note first that due to \eqref{e.normofupreal}, we have
$$
\|\chi_{\R_+} u_+(\cdot,\lambda_*)\| = \left( m_+'(\lambda_*) \right)^{1/2}.
$$

Suppose $\varphi_+ \in L^2(\R_+)$. Then, for $x < 0$, we have
\begin{align*}
[A \varphi_+] (x) & = \int_0^\infty G(x,y;\lambda_*) \varphi_+(y) \, dy \\
& = \int_0^\infty - \frac{u_+(y,\lambda_*) u_-(x,\lambda_*)}{m_+(\lambda_*) + m_-(\lambda_*)} \varphi_+(y) \, dy \\
& = - \frac{\langle \varphi_+, u_+(\cdot,\lambda_*) \rangle}{m_+(\lambda_*) + m_-(\lambda_*)} u_-(x,\lambda_*) \\
& = - \frac{\left( m_+'(\lambda_*) m_-'(\lambda_*) \right)^{1/2}}{m_+(\lambda) + m_-(\lambda)}
\langle \varphi_+,  e_+  \rangle \, e_-(x)
\end{align*}
due to \eqref{e.greensfunction}. The computation of the other off-diagonal term is analogous.
\end{proof}

\begin{lemma}
Let $\lambda \in \rho (L)$ and let $z$ be given by \eqref{e.changeofvariables}. Then, the matrix
\begin{equation}\label{e.Rzmatrixdef}
R(z) = \begin{pmatrix} \langle (A-z)^{-1} e_- , e_- \rangle & \langle (A-z)^{-1} e_+ , e_- \rangle \\ \langle (A-z)^{-1} e_- , e_+ \rangle & \langle (A-z)^{-1} e_+ , e_+ \rangle \end{pmatrix}
\end{equation}
has the form
\begin{equation}\label{e.aminuszinversematrix}
R(z)^{-1} = \begin{pmatrix} r_-(z)^{-1} & a_0 \\ a_0 & r_+(z)^{-1} \end{pmatrix}^{-1},
\end{equation}
where $a_0$ is as in \eqref{e.a0def} and
\begin{align}
\label{e.rminusdef} r_-(z) & = \frac{m_+(\lambda_*) + m_-(\lambda_*)}{m_-'(\lambda_*)} \cdot \frac{m_-(\lambda_*) - m_-(\lambda)}{m_+(\lambda_*) + m_-(\lambda)}, \\
\label{e.rplusdef} r_+(z) & = \frac{m_+(\lambda_*) + m_-(\lambda_*)}{m_+'(\lambda_*)} \cdot \frac{m_+(\lambda_*) - m_+(\lambda)}{m_-(\lambda_*) + m_+(\lambda)}.
\end{align}
\end{lemma}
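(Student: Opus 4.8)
The plan is to carry out two independent steps: a purely algebraic reduction of $R(z)$ to the ``diagonal'' quantities
$$
\rho_\pm(z) := \big\langle (A_\pm - z)^{-1} e_\pm, e_\pm \big\rangle
$$
together with $a_0$, and then an explicit evaluation showing $\rho_\pm(z) = r_\pm(z)$.

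\emph{Step 1 (algebraic reduction).} By \eqref{e.Adecomposition}, with respect to $L^2(\R) = L^2(\R_-) \oplus L^2(\R_+)$ we may write
$$
A - z = \begin{pmatrix} A_- - z & a_0 \langle \,\cdot\,, e_+\rangle\, e_- \\ a_0 \langle\,\cdot\,, e_-\rangle\, e_+ & A_+ - z \end{pmatrix},
$$
both off-diagonal blocks having rank one. Inverting this $2\times2$ operator matrix by the Schur complement with respect to the lower-right block, the $(-,-)$-block of $(A-z)^{-1}$ equals $\big(A_- - z - a_0^2 \rho_+(z)\, \langle\,\cdot\,,e_-\rangle\, e_-\big)^{-1}$, and the Aronszajn--Krein formula $\langle (T - t\langle\,\cdot\,,v\rangle v)^{-1} v, v\rangle = \langle T^{-1}v,v\rangle / (1 - t\langle T^{-1}v,v\rangle)$ turns its $(e_-,e_-)$ matrix element into $\rho_-(z)/(1 - a_0^2 \rho_-(z)\rho_+(z))$. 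Doing the analogous bookkeeping for the three remaining blocks yields
$$
R(z) = \frac{1}{1 - a_0^2 \rho_-(z)\rho_+(z)} \begin{pmatrix} \rho_-(z) & -a_0 \rho_-(z)\rho_+(z) \\ -a_0 \rho_-(z)\rho_+(z) & \rho_+(z) \end{pmatrix},
$$
whose inverse is at once $\left(\begin{smallmatrix} \rho_-(z)^{-1} & a_0 \\ a_0 & \rho_+(z)^{-1} \end{smallmatrix}\right)$, i.e.\ \eqref{e.aminuszinversematrix} once we know $\rho_\pm = r_\pm$. (Equivalently, writing $A = (A_-\oplus A_+) + a_0 \iota\sigma\iota^*$ with $\iota : \C^2 \to L^2(\R)$, $\iota(c_-,c_+) = c_- e_- + c_+ e_+$, and $\sigma = \left(\begin{smallmatrix} 0&1\\1&0\end{smallmatrix}\right)$, and noting that $A_-\oplus A_+$ preserves the half-line splitting so that $\iota^*(A_-\oplus A_+ - z)^{-1}\iota = \mathrm{diag}(\rho_-,\rho_+)$, one line of the Woodbury identity gives $R(z)^{-1} = \mathrm{diag}(\rho_-,\rho_+)^{-1} + a_0 \sigma$.)

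\emph{Step 2 (identification of $\rho_\pm$).} Since the coupling terms in \eqref{e.Adecomposition} map $L^2(\R_+)$ into $L^2(\R_-)$ and vice versa, $A_+$ is the compression of $(L-\lambda_*)^{-1}$ to $L^2(\R_+)$; restricting the kernel \eqref{e.greensfunction} at $\lambda_*$ to $\R_+\times\R_+$ shows that $A_+ = (L_+ - \lambda_*)^{-1}$, where $L_+$ is the half-line Schr\"odinger operator on $\R_+$ with the real, $\lambda_*$-dependent boundary condition $y'(0) + m_-(\lambda_*) y(0) = 0$ at the origin (the one obeyed by $u_-(\cdot,\lambda_*)|_{\R_+}$), the Wronskian of the two relevant solutions being $W(u_+(\cdot,\lambda_*), u_-(\cdot,\lambda_*)) = -(m_+(\lambda_*) + m_-(\lambda_*))$. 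From $z = (\lambda-\lambda_*)^{-1}$ and the elementary operator identity $(A_+ - z)^{-1} = -(\lambda - \lambda_*) I - (\lambda-\lambda_*)^2 (L_+ - \lambda)^{-1}$ we get
$$
\rho_+(z) = -(\lambda-\lambda_*) - (\lambda-\lambda_*)^2 \big\langle (L_+ - \lambda)^{-1} e_+, e_+\big\rangle ,
$$
and by \eqref{e.normofupreal} we have $e_+ = u_+(\cdot,\lambda_*)/\sqrt{m_+'(\lambda_*)}$ on $\R_+$. To compute $\langle (L_+-\lambda)^{-1} u_+(\cdot,\lambda_*), u_+(\cdot,\lambda_*)\rangle$, I would solve $(L_+ - \lambda) g = u_+(\cdot,\lambda_*)$ with $g \in \mathrm{dom}(L_+)$ by variation of parameters: a particular solution square-integrable at $+\infty$ is $(\lambda_* - \lambda)^{-1} u_+(\cdot,\lambda_*)$, and one adds the multiple of $u_+(\cdot,\lambda)$ that restores the boundary condition at $0$, obtaining
$$
g = \frac{1}{\lambda_* - \lambda}\, u_+(\cdot,\lambda_*) + \frac{m_+(\lambda_*) + m_-(\lambda_*)}{(\lambda-\lambda_*)\big(m_+(\lambda) + m_-(\lambda_*)\big)}\, u_+(\cdot,\lambda).
$$
Pairing $g$ with $u_+(\cdot,\lambda_*)$, then using \eqref{e.normofupreal} for $\int_0^\infty u_+(\cdot,\lambda_*)^2 = m_+'(\lambda_*)$ and \eqref{e.mfunctionenergyvariation1} for $\int_0^\infty u_+(\cdot,\lambda) u_+(\cdot,\lambda_*) = (m_+(\lambda) - m_+(\lambda_*))/(\lambda-\lambda_*)$, the contributions singular at $\lambda=\lambda_*$ cancel after multiplication by $-(\lambda-\lambda_*)^2$ and $\rho_+(z)$ collapses precisely to the right-hand side of \eqref{e.rplusdef}. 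The computation of $\rho_-(z)$ and \eqref{e.rminusdef} is the mirror image under $x\mapsto -x$, with the roles of $+$ and $-$ interchanged.

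Step 1 is routine linear algebra. The step needing care is the identification in Step 2: recognizing $A_\pm$ as genuine half-line resolvents at $\lambda_*$ carrying the $\lambda_*$-dependent boundary condition, tracking Wronskians and signs, and verifying the cancellation of the terms singular at $\lambda = \lambda_*$ that turns $\rho_\pm(z)$ into the stated rational expression in $m_\pm$. This is exactly where Lemma~\ref{l.mfunctionenergyvariation} and \eqref{e.normofupreal}--\eqref{e.normofumreal} are used, which is presumably why they were established first.
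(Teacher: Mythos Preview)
Your proof is correct, but it follows a genuinely different route from the paper's.

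The paper works on the whole line throughout: it applies the resolvent identity $(A-z)^{-1}=(\lambda_*-\lambda)-(\lambda_*-\lambda)^2(L-\lambda)^{-1}$ globally, solves $(L-\lambda)f=\chi_{\R_\pm}u_\pm(\cdot,\lambda_*)$ by matching a three-piece ansatz across $x=0$, packages the resulting constants into the matrix identity
\[
\begin{pmatrix} c_1^- & c_1^+ \\ c_2^- & c_2^+ \end{pmatrix}=-\frac{1}{\lambda_*-\lambda}\,M(\lambda)^{-1}M(\lambda_*),\qquad M(\lambda)=\begin{pmatrix}1&-1\\ m_-(\lambda)& m_+(\lambda)\end{pmatrix},
\]
and then assembles $R(z)$ and inverts it at the very end. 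Your approach instead first strips off the rank-two coupling in \eqref{e.Adecomposition} by a Schur/Woodbury argument to obtain the structural identity $R(z)^{-1}=\mathrm{diag}(\rho_-^{-1},\rho_+^{-1})+a_0\sigma$, and only then computes the half-line quantities $\rho_\pm$ by recognizing $A_\pm=(L_\pm-\lambda_*)^{-1}$ for half-line operators with the $\lambda_*$-dependent boundary conditions $y'(0)\pm m_\mp(\lambda_*)y(0)=0$ and running the resolvent identity on each half-line separately.

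What your approach buys is a clean explanation of the form of \eqref{e.aminuszinversematrix}: the off-diagonal $a_0$ is precisely the rank-one coupling, and $r_\pm$ are intrinsically half-line objects, so the later interpretation \eqref{e.rminusandJ}--\eqref{e.rplusandJ} as Jacobi $m$-functions is immediate. What the paper's approach buys is that it never needs to identify $A_\pm$ with a specific half-line Schr\"odinger resolvent or name the induced boundary condition; everything is a full-line computation and a single $2\times2$ matrix inversion. Both rely on Lemma~\ref{l.mfunctionenergyvariation} and \eqref{e.normofupreal}--\eqref{e.normofumreal} at exactly the same point, to evaluate the pairings of $u_\pm(\cdot,\lambda)$ with $u_\pm(\cdot,\lambda_*)$.
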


\begin{proof}
Recall that $A = (L - \lambda_*)^{-1}$. Thus, by the resolvent identity,
$$
(A-z)^{-1} = (L - \lambda_*) (\lambda_* - \lambda) (L - \lambda)^{-1} = (L - \lambda + \lambda - \lambda_*) (\lambda_* - \lambda) (L - \lambda)^{-1},
$$
and hence
\begin{equation}\label{e.aminuszlminuslambda}
(A-z)^{-1} = (\lambda_* - \lambda) - (\lambda_* - \lambda)^2 (L - \lambda)^{-1}.
\end{equation}

Thus, in order to determine the matrix elements in \eqref{e.Rzmatrixdef} it suffices to consider $(L - \lambda)^{-1} \chi_{\R_\pm} u_\pm$. Denote
\begin{equation}\label{e.f3termexpansion}
f = (L - \lambda)^{-1} \chi_{\R_\pm} u_\pm.
\end{equation}
Since ($f \in D(L)$ and) $(L-\lambda)f = \chi_{R_\pm} u_\pm$, there must be constants $c_1^\pm, c_2^\pm$ such that
\begin{equation}\label{e.f3termexpansion2}
f = c_1^\pm \chi_{\R_-} u_-(x,\lambda) + \frac{1}{\lambda_* - \lambda} \chi_{\R_\pm} u_\pm(x,\lambda_*) + c_2^\pm \chi_{\R_+} u_+(x,\lambda).
\end{equation}
Indeed, for each expression like the one on the right-hand side, $(L-\lambda)f = \chi_{\R_\pm} u_\pm$ holds pointwise, and in addition there are unique choices of $c_1^\pm,c_2^\pm$ such that the expression determines a function in $D(L)$.

If we combine \eqref{e.aminuszlminuslambda} and \eqref{e.f3termexpansion}, we find
\begin{align}
\label{e.aminusz1} (A-z)^{-1} \chi_{\R_+} u_+ & = - (\lambda_* - \lambda)^2 \begin{pmatrix} \chi_{\R_-} u_-(\cdot,\lambda) & \chi_{\R_+} u_+(\cdot,\lambda) \end{pmatrix} \begin{pmatrix} c_1^+ \\ c_2^+ \end{pmatrix}, \\
\label{e.aminusz2} (A-z)^{-1} \chi_{\R_-} u_- & = - (\lambda_* - \lambda)^2 \begin{pmatrix} \chi_{\R_-} u_-(\cdot,\lambda) & \chi_{\R_+} u_+(\cdot,\lambda) \end{pmatrix} \begin{pmatrix} c_1^- \\ c_2^- \end{pmatrix}.
\end{align}

Let us determine these unique values of $c_1^\pm,c_2^\pm$. The only issue is to ensure the continuity of the function and of the derivative at zero. Continuity at zero leads to the requirement
$$
c_1^\pm = \pm \frac{1}{\lambda_* - \lambda} + c_2^\pm.
$$
Continuity of the derivative at zero leads to the requirement
$$
- c_1^\pm m_-(\lambda) = \frac{1}{\lambda_* - \lambda} m_\pm(\lambda_*) + c_2^\pm m_+(\lambda).
$$
Thus, we seek $c_1^\pm,c_2^\pm$ with
\begin{equation}\label{e.matrixforc1c2}
\begin{pmatrix} 1 & -1 \\ m_-(\lambda) & m_+(\lambda) \end{pmatrix}  \begin{pmatrix} c_1^\pm \\ c_2^\pm \end{pmatrix} = \frac{1}{\lambda_* - \lambda} \begin{pmatrix} \pm 1 \\ - m_\pm(\lambda_*)\end{pmatrix}.
\end{equation}
Since $m_\pm$ are Herglotz functions, the determinant $m_+(\lambda) + m_-(\lambda)$ of the matrix $M(\lambda)$ appearing on the left-hand side of \eqref{e.matrixforc1c2} is non-zero for every $\lambda \in \C_+$. This shows that $c_1^\pm,c_2^\pm$ are determined uniquely by
\begin{equation}\label{e.cpm12}
\begin{pmatrix} c_1^- & c_1^+ \\ c_2^- & c_2^+ \end{pmatrix} = - \frac{1}{\lambda_* - \lambda} M(\lambda)^{-1} M(\lambda_*).
\end{equation}

Combining \eqref{e.aminusz1} and \eqref{e.aminusz2} with \eqref{e.cpm12} and using Lemma~\ref{l.mfunctionenergyvariation}, we obtain
\begin{align*}
& \begin{pmatrix} \langle (A-z)^{-1} \chi_{\R_-} u_- , \chi_{\R_-} u_- \rangle & \langle (A-z)^{-1} \chi_{\R_+} u_+ , \chi_{\R_-} u_- \rangle \\ \langle  (A-z)^{-1} \chi_{\R_-} u_- , \chi_{\R_+} u_+ \rangle & \langle (A-z)^{-1} \chi_{\R_+} u_+ , \chi_{\R_+} u_+ \rangle \end{pmatrix} \\
& = \begin{pmatrix} \langle \chi_{\R_-} u_- | \\ \langle \chi_{\R_+} u_+ | \end{pmatrix} \begin{pmatrix} | (A-z)^{-1} \chi_{\R_-} u_- \rangle &  | (A-z)^{-1} \chi_{\R_+} u_+ \rangle \end{pmatrix} \\
& = - (\lambda_* - \lambda)^2 \begin{pmatrix} \langle \chi_{\R_-} u_- | \\ \langle \chi_{\R_+} u_+ | \end{pmatrix} \begin{pmatrix} | \chi_{\R_-} u_-(\cdot,\lambda) \rangle & | \chi_{\R_+} u_+(\cdot,\lambda) \rangle \end{pmatrix} \begin{pmatrix} c_1^- & c_1^+ \\ c_2^- & c_2^+ \end{pmatrix} \\
& = \begin{pmatrix}  m_-(\lambda_*) - m_-(\lambda) & 0 \\ 0 &  m_+(\lambda_*) - m_+(\lambda) \end{pmatrix} M(\lambda)^{-1} M(\lambda_*).
\end{align*}
Here we use the notation $\langle a | b \rangle := \langle b, a \rangle$. Therefore,
\begin{align*}
R(z) & = \begin{pmatrix}  \frac{\langle (A-z)^{-1} \chi_{\R_-} u_- , \chi_{\R_-} u_- \rangle}{\|\chi_{\R_-} u_-\|^{2}} &  \frac{\langle (A-z)^{-1} \chi_{\R_+} u_+ , \chi_{\R_-} u_- \rangle}{\|\chi_{\R_-} u_-\| \|\chi_{\R_+} u_+\|} \\  \frac{\langle  (A-z)^{-1} \chi_{\R_-} u_- , \chi_{\R_+} u_+ \rangle}{\|\chi_{\R_-} u_-\| \|\chi_{\R_+} u_+\|} & \frac{\langle (A-z)^{-1} \chi_{\R_+} u_+ , \chi_{\R_+} u_+ \rangle}{\|\chi_{\R_+} u_+\|^2} \end{pmatrix} \\
& = \begin{pmatrix}  \frac{m_-(\lambda_*) - m_-(\lambda)}{(m_-'(\lambda_*))^{1/2}} & 0 \\ 0 &  \frac{m_+(\lambda_*) - m_+(\lambda)}{(m_+'(\lambda_*))^{1/2}} \end{pmatrix} M(\lambda)^{-1} M(\lambda_*) \begin{pmatrix} (m_-'(\lambda_*))^{-1/2} & 0 \\ 0 & (m_+'(\lambda_*))^{-1/2} \end{pmatrix},
\end{align*}
that is,
\begin{align*}
R(z)^{-1} & = \begin{pmatrix} (m_-'(\lambda_*))^{1/2} & 0 \\ 0 & (m_+'(\lambda_*))^{1/2} \end{pmatrix} M(\lambda_*)^{-1} M(\lambda) \begin{pmatrix}  \frac{(m_-'(\lambda_*))^{1/2}}{m_-(\lambda_*) - m_-(\lambda)} & 0 \\ 0 &  \frac{(m_+'(\lambda_*))^{1/2}}{m_+(\lambda_*) - m_+(\lambda)} \end{pmatrix} \\
& = \frac{1}{m_+(\lambda_*) + m_-(\lambda_*)} \begin{pmatrix} (m_-'(\lambda_*)) \frac{m_+(\lambda_*) + m_-(\lambda)}{m_-(\lambda_*) - m_-(\lambda)} & - (m_+'(\lambda_*) m_-'(\lambda_*))^{1/2} \\ - (m_+'(\lambda_*) m_-'(\lambda_*))^{1/2} & (m_+'(\lambda_*)) \frac{m_-(\lambda_*) + m_+(\lambda)}{m_+(\lambda_*) - m_+(\lambda)} \end{pmatrix} \\
& = \begin{pmatrix}  \frac{(m_-'(\lambda_*))(m_+(\lambda_*) + m_-(\lambda))}{(m_+(\lambda_*) + m_-(\lambda_*))(m_-(\lambda_*) - m_-(\lambda))} & a_0 \\ a_0 & \frac{(m_+'(\lambda_*)) (m_-(\lambda_*) + m_+(\lambda))}{(m_+(\lambda_*) + m_-(\lambda_*)) (m_+(\lambda_*) - m_+(\lambda))} \end{pmatrix},
\end{align*}
which proves \eqref{e.aminuszinversematrix} and concludes the proof.
\end{proof}

\begin{remark}\label{r.mplusminusrplusminuscorrespondence}
From \eqref{e.a0def} and \eqref{e.rminusdef}--\eqref{e.rplusdef}, we find
\begin{align}
\label{e.mplustorplus} a_0^2 r_+(z) & = \frac{m_-'(\lambda_*)}{m_+(\lambda_*) + m_- (\lambda_*)} \cdot \frac{m_+(\lambda_*) - m_+(\lambda)}{m_-(\lambda_*) + m_+(\lambda)}, \\
\label{e.mminustorminus} - \frac{1}{r_-(z)} & = - \frac{m_-'(\lambda_*)}{m_+(\lambda_*) + m_- (\lambda_*)} \cdot \frac{m_+(\lambda_*) + m_-(\lambda)}{m_-(\lambda_*) - m_-(\lambda)}.
\end{align}
\end{remark}

\begin{lemma}\label{l.cyclic}
The vectors $\{ e_-, e_+ \}$ are cyclic for $A$. That is, if $f \in L^2(\R)$ is such that
$$
\langle f, (A - \bar z)^{-1} e_- \rangle = \langle f, (A - \bar z)^{-1} e_+ \rangle = 0
$$
for every $z \in \C \setminus \R$, then $f = 0$.
\end{lemma}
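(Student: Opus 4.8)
The plan is to reduce the cyclicity claim, by formal manipulation of the identities already established in this section, to a totality statement for the Weyl solutions on each half-line, and then to settle the latter with the spectral theory of the half-line operator. First, since $A$ is bounded and self-adjoint, $\langle f,(A-\bar z)^{-1}e_\pm\rangle=\langle(A-z)^{-1}f,e_\pm\rangle$, so the hypothesis says that $z\mapsto\langle(A-z)^{-1}f,e_\pm\rangle$ vanishes on $\C\setminus\R$; being analytic on the connected set $\C\setminus\sigma(A)$ (and $\sigma(A)\subset\R$), it vanishes identically there. Inserting \eqref{e.aminuszlminuslambda} with $\lambda=\lambda_*+1/z\in\rho(L)$ yields $\langle(L-\lambda)^{-1}f,e_\pm\rangle=(\lambda_*-\lambda)^{-1}\langle f,e_\pm\rangle$ for all $\lambda\in\rho(L)$; the left-hand side is regular at $\lambda_*\in\rho(L)$ whereas the right-hand side has a pole there unless $\langle f,e_\pm\rangle=0$, so $\langle f,e_\pm\rangle=0$ and hence $\langle(L-\lambda)^{-1}f,e_\pm\rangle=0$ for every $\lambda\in\rho(L)$.

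Next, write $f=f_-\oplus f_+$ with $f_\pm\in L^2(\R_\pm)$ and put $P_+(\lambda)=\int_0^\infty f_+(x)u_+(x,\lambda)\,dx$ and $P_-(\lambda)=\int_{-\infty}^0 f_-(x)u_-(x,\lambda)\,dx$ (finite, since $u_\pm(\cdot,\lambda)\in L^2(\R_\pm)$ for $\lambda\in\rho(L)$). Using that $L$ is a real operator, together with the explicit expansion \eqref{e.f3termexpansion2} of $(L-\lambda)^{-1}(\chi_{\R_\pm}u_\pm(\cdot,\lambda_*))$ and Lemma~\ref{l.mfunctionenergyvariation}, one rewrites $\langle(L-\lambda)^{-1}f,e_+\rangle$ and $\langle(L-\lambda)^{-1}f,e_-\rangle$ --- the middle term of \eqref{e.f3termexpansion2}, proportional to $\langle f,e_\pm\rangle$, having dropped out by the first step --- as, up to positive constants, the components of $\begin{pmatrix}c_1^- & c_2^-\\ c_1^+ & c_2^+\end{pmatrix}\begin{pmatrix}P_-(\lambda)\\ P_+(\lambda)\end{pmatrix}$, with $c_i^\pm=c_i^\pm(\lambda)$ as in \eqref{e.cpm12}. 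As $\begin{pmatrix}c_1^- & c_1^+\\ c_2^- & c_2^+\end{pmatrix}=-\frac{1}{\lambda_*-\lambda}M(\lambda)^{-1}M(\lambda_*)$ is invertible ($\det M(\lambda)=m_+(\lambda)+m_-(\lambda)\neq0$ off $\R$, and $\det M(\lambda_*)=m_+(\lambda_*)+m_-(\lambda_*)\neq0$ because $\lambda_*\notin\sigma(L)$), the first step forces $P_+(\lambda)=P_-(\lambda)=0$ for all $\lambda\in\rho(L)$.

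The remaining and substantive point is this: if $g\in L^2(\R_+)$ satisfies $\int_0^\infty g(x)u_+(x,\lambda)\,dx=0$ for all $\lambda\in\rho(L)$, then $g=0$ (the statement for $f_-$ on $\R_-$ is the same after $x\mapsto-x$). Let $L_+^D$ be the restriction of $L$ to $\R_+$ with a Dirichlet condition at $0$, so that $m_+$ is its Weyl function, $u_+(\cdot,\lambda)$ its Weyl solution, and $\lambda_*$ lies below $\sigma(L_+^D)$ (indeed $\inf\sigma(L_+^D)\ge\inf\sigma(L)=-1>\lambda_*$). A direct check shows that for $\mu\in\rho(L_+^D)$ the function $(\lambda-\mu)^{-1}\bigl(u_+(\cdot,\lambda)-u_+(\cdot,\mu)\bigr)$ belongs to $D(L_+^D)$ --- it is in $L^2(\R_+)$ and vanishes at $0$ --- and is carried onto $u_+(\cdot,\lambda)$ by $L_+^D-\mu$, using $(L-\lambda)u_+(\cdot,\lambda)=0$; thus $(L_+^D-\mu)^{-1}u_+(\cdot,\lambda)=(\lambda-\mu)^{-1}\bigl(u_+(\cdot,\lambda)-u_+(\cdot,\mu)\bigr)$. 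Consequently $\overline{\mathrm{span}}\{u_+(\cdot,\lambda):\lambda\in\rho(L_+^D)\}$ is invariant under all resolvents of $L_+^D$, hence reduces $L_+^D$, and it coincides with the cyclic subspace generated by $u_+(\cdot,\lambda_*)$. Since half-line Schr\"odinger operators have simple spectrum, this subspace is all of $L^2(\R_+)$ as soon as $u_+(\cdot,\lambda_*)$ has full spectral type; and that holds because in the unitary spectral transform $\mathcal U:L^2(\R_+)\to L^2(\R,d\rho_+)$, $(\mathcal U h)(\mu)=\int_0^\infty h(x)u_2(x,\mu)\,dx$, a Green's-identity computation as in Lemma~\ref{l.mfunctionenergyvariation} (using $W(u_+(\cdot,\lambda_*),u_2(\cdot,\mu))(0)=1$ and the decay of this Wronskian at $+\infty$) gives $(\mathcal U u_+(\cdot,\lambda_*))(\mu)=(\mu-\lambda_*)^{-1}$, so the spectral measure $(\mu-\lambda_*)^{-2}\,d\rho_+(\mu)$ of $u_+(\cdot,\lambda_*)$ is equivalent to $\rho_+$. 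Therefore $g=0$, which finishes the argument.

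I expect the only genuine obstacle to lie in this last step --- more precisely, in the cyclicity of $u_+(\cdot,\lambda_*)$ for $L_+^D$ --- since it needs (a form of) the Weyl--Titchmarsh / eigenfunction-expansion theory for half-line Schr\"odinger operators, whereas the first two steps are pure bookkeeping on top of \eqref{e.greensfunction}, \eqref{e.aminuszlminuslambda}, \eqref{e.f3termexpansion2}, \eqref{e.cpm12} and Lemma~\ref{l.mfunctionenergyvariation}. Equivalently, one may package this step as: $\langle g,u_+(\cdot,\lambda)\rangle=\int(\mathcal U g)(\mu)(\mu-\bar\lambda)^{-1}\,d\rho_+(\mu)=0$ for every non-real $\lambda$, whence $(\mathcal U g)\,d\rho_+=0$ by Stieltjes inversion and $g=0$.
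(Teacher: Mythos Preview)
Your proof is correct and follows essentially the same route as the paper's: both reduce the hypothesis, via the invertibility of the matrix \eqref{e.cpm12} and the identities \eqref{e.aminuszlminuslambda}--\eqref{e.f3termexpansion2}, to the vanishing of $\langle f,\chi_{\R_\pm}u_\pm(\cdot,\lambda)\rangle$ for all non-real $\lambda$, and then appeal to the totality of the Weyl solutions on each half-line. The paper dispatches that last step with a reference to Levitan, whereas you supply a self-contained argument (cyclicity of $u_+(\cdot,\lambda_*)$ for $L_+^D$ via its explicit spectral transform $(\mu-\lambda_*)^{-1}$); your intermediate pole argument extracting $\langle f,e_\pm\rangle=0$ is a minor bookkeeping variant of the paper's direct use of \eqref{e.aminusz1}--\eqref{e.aminusz2}, which already have that term absorbed.
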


\begin{proof}
The assumption is equivalent to
$$
\langle f, (A - \bar z)^{-1} (\chi_{\R_-} u_-(\cdot,\bar \lambda)) \rangle = \langle f, (A - \bar z)^{-1} (\chi_{\R_+} u_+(\cdot,\bar \lambda)) \rangle = 0
$$
for every $z \in \C \setminus \R$. Due to \eqref{e.aminuszlminuslambda}--\eqref{e.f3termexpansion2} and the invertibility of the matrix \eqref{e.cpm12}, we may deduce that
$$
\langle f, \chi_{\R_-} u_-(\cdot,\bar \lambda) \rangle = \langle f, \chi_{\R_+} u_+(\cdot,\bar \lambda) \rangle = 0
$$
for every $\lambda \in \C \setminus \R$. This in turn implies $f = 0$; see, for example, \cite[Chapter~2]{L87}.
\end{proof}

\begin{remark}\label{r.fromqtoJ}
Lemma~\ref{l.cyclic} (together with the spectral theorem) shows that there is $2 \times 2$-matrix measure $d\sigma$ on $\R$ such that
$$
R(z) = \int \frac{d\sigma(x)}{x-z},
$$
where $R(z)$ is the matrix defined in \eqref{e.Rzmatrixdef}. Moreover, there exists a unitary map
$$
\mathcal{F}_A : L^2(\R) \to L^2(d\sigma)
$$
such that
$$
\mathcal{F}_A e_- = \begin{pmatrix} 1 \\ 0 \end{pmatrix}, \quad \mathcal{F}_A e_+ = \begin{pmatrix} 0 \\ 1 \end{pmatrix}, \quad (\mathcal{F}_A (Af))(x) = x (\mathcal{F}_A f) (x), \; f \in L^2(\R).
$$
Since $A$ is a bounded self-adjoint operator (and $d\sigma$ is compactly supported), we can consider the canonical Jacobi matrix model for the operator in $L^2(d\sigma)$ given by the multiplication with the independent variable. In other words, there are bounded sequences $\{ a_n \}_{n \in \Z} \subset (0,\infty)$, $\{ b_n \}_{n \in \Z} \subset \R$ such that with the associated Jacobi matrix $J = J(\{a_n\},\{b_n\})$, acting in $\ell^2(\Z)$ as a bounded self-adjoint operator, we have for a suitable unitary map
$$
\mathcal{F}_J : \ell^2(\Z) \to L^2(d\sigma)
$$
the analogous properties
$$
\mathcal{F}_J e_{-1} = \begin{pmatrix} 1 \\ 0 \end{pmatrix}, \quad \mathcal{F}_J e_0 = \begin{pmatrix} 0 \\ 1 \end{pmatrix}, \quad (\mathcal{F}_J (Jf))(x) = x (\mathcal{F}_J f) (x), \; f \in \ell^2(\Z).
$$
Relative to the decomposition $\ell^2(\Z) = \ell^2(\Z_-) \oplus \ell^2(\Z_+)$ (with $\Z_- = \{ \ldots, -3, -2, -1 \}$ and $\Z_+ = \{ 0, 1, 2, \ldots \}$), this two-sided Jacobi matrix $J$ has the form
$$
J = \begin{pmatrix} &&&&&&& \\ &&&&&&& \\ && J_- &&&&& \\ &&&&&&&  \\ &&&&& a_0 &&& \\ &&&& a_0 &&&& \\ &&&&&&& \\ &&&&&&& \\ &&&&&& J_+ & \\ &&&&&&& \\ &&&&&&& \end{pmatrix}
$$
with suitable one-sided Jacobi matrices $J_\pm$ (acting in $\ell^2(\Z_\pm)$). The connecting terms $a_0$ in this decomposition are precisely given by \eqref{e.a0def} due to \eqref{e.aminuszinversematrix}. Moreover, \eqref{e.Adecomposition} and \eqref{e.aminuszinversematrix} also imply that $r_\pm$ are the standard Weyl-Titchmarsh functions associated with $J_\pm$,
\begin{align}
\label{e.rminusandJ} r_-(z) & = \langle (J_- - z)^{-1} \delta_{-1}, \delta_{-1} \rangle_{\ell^2(\Z_-)}, \\
\label{e.rplusandJ} r_+(z) & = \langle (J_+ - z)^{-1} \delta_0, \delta_0 \rangle_{\ell^2(\Z_+)}.
\end{align}
In fact, $J_\pm$ are defined by these identities.
\end{remark}

\section{The Description of $q(E)$ in Terms of Character-Automorphic Hardy Spaces}\label{s.3}

We begin this section by recalling some definitions and results from \cite{VY14}, mainly concerning character-automorphic Hardy spaces $H^2(\alpha)$ and the correspondence between these spaces and reflectionless Jacobi matrices $J \in J(\tilde E)$. Given the results from the previous section connecting $J(\tilde E)$ and $q(E)$, this will then allow us to identify $q(E)$ with $\{ H^2(\alpha) : \alpha \in \Gamma^* \}$. One of the main objects of interest in our subsequent discussion is the generalized Abel map $\pi : q(E) \to \Gamma^*$, which is defined as the composition map
$$
q \mapsto H^2(\alpha) \mapsto \alpha.
$$

\subsection{Fundamental Definitions and Results from \cite{VY14}}

This subsection summarizes some of the definitions and results discussed in more detail in \cite{VY14}. We refer the reader to that paper, and also to  \cite{H83}, for background and further information. Recall from Section~\ref{s.1} how we associate with the set $E$ domains $\mathcal{D}$ and $\tilde{\mathcal{D}}$, and uniformizations thereof involving a Fuchsian group $\Gamma$. Recall also that $\Gamma^*$ denotes the group of characters of $\Gamma$. For $\alpha \in \Gamma^*$, we let
$$
H^\infty(\alpha) = \{ f \in H^\infty : f \circ \gamma = \alpha(\gamma) f \},
$$
where $H^\infty$ is the standard Hardy space in $\D$. The Widom condition \eqref{e.widomcondition} ensures that $H^\infty(\alpha)$ is non-trivial (i.e., it contains non-zero elements) for every $\alpha \in \Gamma^*$.

We start with some special character-automorphic functions. Fix $z_0 \in \tilde{\mathcal{D}}$ and consider the associated $\Gamma$-orbit $\mathfrak{z}^{-1}(z_0)$, that is, $\mathrm{orb}(\zeta_0) = \{ \gamma(\zeta_0) : \gamma \in \Gamma \}$ for any $\zeta_0 \in \mathfrak{z}^{-1}(z_0)$. Consider the Blaschke product $b_{z_0}$ with zeros precisely given by $\mathfrak{z}^{-1}(z_0)$ and normalized so that $b_{z_0}(0) > 0$. This is the so called Green function of the group $\Gamma$ (cf.~Pommerenke \cite{Pom}), which is related to the classical Green function $G(z,z_0)$ of the domain $\tilde{\mathcal{D}}$ by the following identity,
$$
\log \frac{1}{|b_{z_0}(\zeta)|} = G(\mathfrak z(\zeta),z_0).
$$
That is, starting from $G(z,z_0)$, we can define its harmonic conjugate function $*G(z,z_0)$. However, this function is multivalued in $\tilde{\mathcal{D}}$. In other words, $b_{z_0} = e^{-G(\mathfrak z,z_0) - i*G(\mathfrak z,z_0)}$ is character automorphic. If $\gamma_{\tilde{\mathcal{D}}}$ is an element of the fundamental group $\Gamma_{\tilde{\mathcal{D}}}$ of the domain $\tilde{\mathcal{D}}$, which is equivalent to $\Gamma$, then we have
$$
e^{-G(\gamma_{\tilde{\mathcal{D}}}(z),z_0) - i*G(\gamma_{\tilde{\mathcal{D}}}(z), z_0)} = \mu_{z_0} (\gamma_{\tilde{\mathcal{D}}}) e^{-G(z,z_0) - i*G(z,z_0)}, \ \mu_{z_0}(\gamma_{\tilde{\mathcal{D}}}) \in \T,\ \gamma_{\tilde{\mathcal{D}}} \in \Gamma_{\tilde{\mathcal{D}}},
$$
or, equivalently,
$$
b_{z_0} \circ \gamma = \mu_{z_0}(\gamma) b_{z_0}, \ \gamma \in \Gamma.
$$
This system of multipliers forms a character $\mu_{z_0}$ of the group $\Gamma$.

Let $\tilde E_k=\tilde E\cap[z_0^+,z_k^-]$. The harmonic measure $\omega(z,\tilde E_k,\tilde{\mathcal{D}})$ of the set $\tilde E_k$ in $\tilde{\mathcal{D}}$ evaluated in $z\in \tilde{\mathcal{D}}$ is the harmonic function in $\tilde{\mathcal{D}}$, which satisfies the following boundary conditions,
$$
\omega(z,\tilde E_k,\tilde{\mathcal{D}}) = \begin{cases} 1, &z\in \tilde E_k, \\ 0,& z\in \tilde E\setminus\tilde E_k. \end{cases}
$$
If $\gamma_k$ is a generator of $\Gamma$ and $(\gamma_{\tilde{\mathcal{D}}})_k$ is the corresponding contour in $\tilde{\mathcal{D}}$ around $\tilde E_k$, then
$$
\mu_{z_0}(\gamma_k)=e^{2\pi i \omega(z_0,\tilde E_k,\tilde{\mathcal{D}})}.
$$

To simplify notations we
put
\begin{equation}\label{e.bdefinition}
b(\zeta) := b_\infty (\zeta), \quad \mu = \mu_{\infty} \in \Gamma^*.
\end{equation}

The Martin function $M(z,z_0)$ is a counterpart of the Green function, but related to boundary points of the domain, $z_0 \in \partial \tilde{\mathcal{D}}$. The \textit{symmetric} Martin function can be obtained as the following limit,
$$
M(z,z_0) = \lim_{\varepsilon \to 0} \frac{G(z, z_0 + i \varepsilon) + G(z, z_0 - i \varepsilon)}{G(\infty, z_0 + i \varepsilon) + G(\infty, z_0- i \varepsilon)}.
$$
This Martin function is normalized by the condition $M(\infty,z_0) = 1$. Similarly,
$$
s_{z_0,\ell} = e^{-\ell(M(\mathfrak z,z_0) + i*M(\mathfrak z,z_0))}.
$$
defines a character automorphic inner function. We denote its character by $\chi_{z_0,\ell} \in \Gamma^*$,
$s_{z_0,\ell} \circ \gamma = \chi_{z_0,\ell}(\gamma) s_{z_0,\ell}$.

In this work the main role is played by the Martin function in $\mathcal{D}$ related to the boundary point $\infty \in \partial \mathcal{D}$, which was defined in \eqref{e.Thetanormalization}. Note that $M(\lambda) = M_\infty(\lambda) := \Im \Theta(\lambda)$ is well defined in the upper half plane and possesses a single-valued  extension in $\mathcal{D}$ by the symmetry principle. Indeed, as a result of this extension we obtain a positive harmonic function, such that $M(\lambda) = 0$ for all boundary points $\lambda \in \partial \mathcal{D}$ except infinity, where $M(\lambda) \to \infty$ as $\lambda \to -\infty$.

Let us choose a generator $(\gamma_\mathcal{D})_j$ of the group $\Gamma_\mathcal{D}$, which is the closed curve that starts at $\lambda_*$ and goes in the upper half plane to the spectral gap $(\lambda^-_j,\lambda_j^+)$ and then goes back on the symmetric path in the lower half plane. Note that the symmetry conditions in the $\Theta$-plane on the image of these gaps are of the form
\begin{eqnarray*}
\Re \Theta(\lambda) = \omega_j \ &\text{or}&\ \Theta(\lambda) + \overline{\Theta(\lambda)} = 2 \omega_j, \quad \lambda \in (\lambda^-_j,\lambda_j^+),\\
\Re \Theta(\lambda) = 0 \ &\text{or}&\ \Theta(\lambda) + \overline{\Theta(\lambda)} = 0,\quad \lambda \in (-\infty,-1).
\end{eqnarray*}
Therefore, by the symmetry principle, as a result of these two reflections we get
$$
\Theta((\gamma_\mathcal{D})_j (\lambda_*)) = \Theta(\lambda_*) + 2 \omega_j.
$$
In other words,
\begin{equation}\label{e.chielldef}
e^{i \ell \Theta \circ (\lambda_* + 1/\mathfrak z)} \circ \gamma_j = \chi_{\ell} (\gamma_j) e^{i \ell \Theta \circ (\lambda_* + 1/\mathfrak z)},\ \text{where}\ \chi_{\ell} (\gamma_j) = e^{2 i \omega_j \ell}.
\end{equation}

Generally, for the given system of generators $\gamma_j \simeq (\gamma_\mathcal{D})_j \in \Gamma_\mathcal{D}$, we set an isomorphism of $\Gamma^*$ and $\T^\infty$ by
$$
\{ \alpha_j \} = \{ \alpha(\gamma_j) \} \in \T.
$$
In particular, the shift  operation \eqref{pyu1} can be rewritten as the translation
\begin{equation}\label{pyu2}
\mathcal S_\ell \alpha = \chi^{-1}_\ell \alpha
\end{equation}
in $\Gamma^*$.

In our case the condition \eqref{e.widomcondition} is equivalent to the statement that for the critical points  $c_k \in (z_k^-,z_k^+)$ of the Green function $G(z,\infty)$, the sum of the critical values is finite,
\begin{equation}\label{pyu3}
\sum_{c_j : \nabla G(c_j,\infty) = 0} G(c_j,\infty) < \infty.
\end{equation}
And this is the classical form of the Widom condition. In the other words, the zeros of $b'$, $\{ \mathfrak{z}^{-1}(c_k) : k \ge 1 \}$, also satisfy the Blaschke condition in $\D$.

We  define the last required inner character-automorphic function by
\begin{equation}\label{e.Deltadefinition}
\Delta(\zeta) := \prod_{k \ge 1} b_{c_k}(\zeta),
\end{equation}
and set $\Delta \circ \gamma = \nu(\gamma) \Delta$ for every $\gamma \in \Gamma$. Note that due to the Widom condition, $b'$ is of Smirnov class and its inner part is given by $\Delta$.

Next we let
$$
\hat H^2(\alpha) = \{ f \in H^2 : f \circ \gamma = \alpha(\gamma) f \},
$$
where $\alpha \in \Gamma^*$ and $H^2$ is the standard Hardy space in $\D$. As a consequence of the Widom condition \eqref{pyu3}, we see again that $\hat H^2(\alpha)$ is non-trivial.

The spaces $\check H^2(\alpha)$ in turn are defined via orthogonal complements. We note that the group $\Gamma$ acts on $\T$. Moreover, under the Widom condition, for this action there exists a measurable fundamental set \cite{Pom}. Thus, we can also define
$$
L^2(\alpha) = \{ f \in L^2 : f \circ \gamma = \alpha(\gamma) f \},
$$
analogously to the definitions above.

The annihilator of $\hat H^2_0(\alpha) := \{ f \in \hat H^2(\alpha) : f(0) = 0 \}$ (i.e., the orthogonal complement of this subspace with respect to the standard $L^2$-inner product on $L^2(\alpha)$) is contained in the subspace\footnote{To avoid confusion we emphasize that a $\overline{\cdot}$ over a complex function space indicates element-wise complex conjugation.}
$$
\Delta \overline{\hat H^2 (\nu \alpha^{-1})} := \{ \Delta \bar g : g \in \hat H^2 (\nu \alpha^{-1}) \} \subset L^2(\alpha).
$$
We let
$$
\check H^2(\alpha) := \{ g : \Delta \bar g \text{ belongs to the annihilator of } \hat H^2_0(\nu \alpha^{-1}) \}.
$$

For every $\alpha \in \Gamma^*$, by the Riesz representation theorem, there exist reproducing kernels $\hat k^\alpha, \check k^\alpha$ such that
\begin{align*}
\langle f, \hat k^\alpha \rangle & = f(0) \quad \text{ for every } f \in \hat H^2(\alpha), \\
\langle g, \check k^\alpha \rangle & = g(0) \quad \text{ for every } g \in \check H^2(\alpha),
\end{align*}
for which one can show the uniform bounds
\begin{equation}\label{e.repkernbounds}
\Delta^2(0) \le \check k^\alpha(0) \le \hat k^\alpha(0) \le 1.
\end{equation}
Normalizing these functions, we obtain
$$
\hat e^\alpha(\zeta) = \frac{\hat k^\alpha(\zeta)}{\|\hat k^\alpha\|}, \quad \check e^\alpha(\zeta) = \frac{\check k^\alpha(\zeta)}{\|\check k^\alpha\|}.
$$
It turns out that
$$
L^2(\alpha) = \Delta \overline{\check H^2_0 (\nu \alpha^{-1})} \oplus \{ \hat e^\alpha \} \oplus \hat H^2_0(\alpha) = \Delta \overline{\check H^2_0 (\nu \alpha^{-1})} \oplus \{ \Delta \overline{\check e^{\nu \alpha^{-1}}} \} \oplus \hat H^2_0(\alpha).
$$

The fundamental fact is (see the discussion at the end of \cite[Subsection~2.3]{VY14} and \cite{H83}):
\begin{equation}\label{e.dctcharacterization1}
\text{DCT holds } \quad \Leftrightarrow \quad \hat H^2(\alpha) = \check H^2(\alpha) \text{ for all } \alpha \in \Gamma^*.
\end{equation}

Denote
$$
\mathcal{NTF} = \{ \alpha \in \Gamma^* : \hat H^2(\alpha) \not= \check H^2(\alpha) \}.
$$
That is, $\mathcal{NTF}$ is precisely the set of $\alpha$'s that have a non-trivial fiber under $H^2(\alpha) \mapsto \alpha$. Then, by \eqref{e.dctcharacterization1},
\begin{equation}\label{e.dctcharacterization}
E \text{ satisfies DCT } \Leftrightarrow \mathcal{NTF} = \emptyset.
\end{equation}
Moreover, $\mathcal{NTF}$ is always small,
\begin{equation}\label{e.aentf}
\mathrm{Haar}_{\Gamma^*}(\mathcal{NTF}) = 0,
\end{equation}
where $\mathrm{Haar}_{\Gamma^*}$ denotes the Haar measure on $\Gamma^*$; see \cite[Theorem~6.2]{VY14}.

By an intermediate (character-automorphic) Hardy space we mean any closed space $H^2(\alpha)$ with
$$
\check H^2(\alpha) \subseteq H^2(\alpha) \subseteq \hat H^2(\alpha)
$$
and the invariance property
$$
\mathfrak{z} H^2_0(\alpha) \subseteq H^2(\alpha),
$$
where, as usual, $H^2_0(\alpha) := \{ f \in H^2(\alpha) : f(0) = 0 \}$. Let $k^\alpha$ be the reproducing kernel of this space and $e^\alpha := k^\alpha/\|k^\alpha\|$ its normalization. Thus,
\begin{equation}\label{e.h2alphadecomp}
H^2(\alpha) = \{ e^\alpha \} \oplus H^2_0(\alpha).
\end{equation}
Every function $f$ in $H^2_0(\alpha)$ is of the form $f = bg$ with $b$ from \eqref{e.bdefinition} and some $g \in \hat H^2(\alpha \mu^{-1})$. Thus, with $H^2(\alpha \mu^{-1})$ defined by $H^2_0(\alpha) = b H^2(\alpha \mu^{-1})$, \eqref{e.h2alphadecomp} can be rewritten as
$$
H^2(\alpha) = \{ e^\alpha \} \oplus b H^2(\alpha \mu^{-1}),
$$
and now we can iterate and obtain
$$
H^2(\alpha) = \{ e^\alpha \} \oplus \{ b e^{\alpha \mu^{-1}} \} \oplus b^2 H^2(\alpha \mu^{-2}) = ... = \{ e^\alpha \} \oplus \{ b e^{\alpha \mu^{-1}} \}\oplus \{ b^2 e^{\alpha \mu^{-2}} \}\oplus ...
$$
It is easy to see that the system
\begin{equation}\label{e.h2alphaonb}
e_n^\alpha(\zeta) := b^n(\zeta) \frac{k^{\alpha \mu^{-n}}(\zeta)}{\sqrt{k^{\alpha \mu^{-n}}(0)}}, \quad n \ge 0
\end{equation}
forms an orthonormal basis of $H^2(\alpha)$.

Next we shift in the other direction. Notice that by \eqref{e.repkernbounds}, we have $e_0^\alpha(0) > \check e^\alpha(0) > \Delta(0)$, so that $\mathfrak{z} e_0^\alpha$ does not belong to $H^2(\alpha)$ (since it has a simple pole at the origin). Thus, since $\mathfrak{z} e_0^\alpha$ is orthogonal to $b^2 H^2(\alpha \mu^{-2})$, there are $e_{-1}^\alpha$ and coefficients $a_0(\alpha), a_1(\alpha), b_0(\alpha)$ such that
$$
\mathfrak{z} e_0^\alpha = a_0(\alpha) e_{-1}^\alpha + b_0(\alpha) e_0^\alpha + a_1(\alpha) e_1^\alpha, \quad \| e_{-1}^\alpha \| = 1, \quad (b e_{-1}^\alpha)(0) > 0.
$$
In particular, $e_{-1}^\alpha$ is normalized and orthogonal to all $e_n^\alpha$, $n \ge 0$. We define $H^2(\alpha \mu)$ by
$$
b^{-1} H^2(\alpha \mu) = \{ e_{-1}^\alpha \} \oplus H^2(\alpha).
$$
Now we can iterate as before and obtain normalized vectors $e_n^\alpha$, $n \le -1$.

The system $\{ e_n^\alpha : n \in \Z \}$ forms an orthonormal basis of $L^2(\alpha)$ and the multiplication operator by $\mathfrak{z}$ in $L^2(\alpha)$ is given by a Jacobi matrix with respect to this basis. More precisely, we have the following result from \cite{VY14}.

\begin{prop}\label{p.fromh2alphatoJ}
With respect to the ONB $\{ e_n^\alpha : n \in \Z \}$, multiplication by $\mathfrak{z}$ in $L^2(\alpha)$ is given by the following Jacobi matrix $J = J(H^2(\alpha))$:
$$
\mathfrak{z} e_n^\alpha = a_n(\alpha) e_{n-1}^\alpha + b_n(\alpha) e_n^\alpha + a_{n+1}(\alpha) e_{n+1}^\alpha,
$$
where
\begin{align*}
a_n(\alpha) & = \mathcal{A}(\alpha \mu^{-n}) , \quad \mathcal{A}(\alpha) := (\mathfrak{z} b)(0) \sqrt{\frac{k^\alpha(0)}{k^{\alpha \mu}(0)}}, \\
b_n(\alpha) & = \mathcal{B}(\alpha \mu^{-n}) , \quad \mathcal{B}(\alpha) := \frac{(\mathfrak{z} b)(0)}{b'(0)} \left[ \frac{(k^\alpha)'(0)}{k^{\alpha}(0)} - \frac{(k^{\alpha \mu})'(0)}{k^{\alpha \mu}(0)} \right] + \frac{(\mathfrak{z} b)'(0)}{b'(0)}.
\end{align*}
\end{prop}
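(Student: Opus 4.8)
The plan is to first record the structural facts that make multiplication by $\mathfrak{z}$ tridiagonal in the basis $\{e_n^\alpha\}_{n\in\Z}$, and then to extract the matrix entries by a reproducing-kernel computation at the point $\zeta=0$. Since $\mathfrak{z}$ is automorphic and its boundary values on $\T$ are real (they parametrize $\tilde E\subset\R$) and bounded (as $\tilde E$ is compact), multiplication by $\mathfrak{z}$ is a well-defined bounded self-adjoint operator on $L^2(\alpha)$, even though $\mathfrak{z}$ has poles inside $\D$. Set $\mathcal{K}_n:=\overline{\mathrm{span}}\{e_m^\alpha:m\ge n\}$. From the recursive construction of the $e_n^\alpha$ one identifies $\mathcal{K}_0=H^2(\alpha)$, $\mathcal{K}_1=H^2_0(\alpha)=bH^2(\alpha\mu^{-1})$, and more generally $e_n^\alpha=b^n e_0^{\alpha\mu^{-n}}$ for every $n\in\Z$ (for $n<0$ this is the meromorphic function $b^{-|n|}e_0^{\alpha\mu^{|n|}}$; the sign is pinned down by the normalizations $(be_{-1}^\alpha)(0)>0$ and $e_0^\beta(0)>0$), hence $\mathcal{K}_n=b^nH^2(\alpha\mu^{-n})$. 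Multiplying the invariance property $\mathfrak{z}H^2_0(\alpha\mu^{-(n-1)})\subseteq H^2(\alpha\mu^{-(n-1)})$ by $b^{\,n-1}$ gives $\mathfrak{z}\mathcal{K}_n\subseteq\mathcal{K}_{n-1}$ for all $n$; together with self-adjointness this forces $\langle\mathfrak{z}e_n^\alpha,e_m^\alpha\rangle=0$ whenever $|n-m|\ge 2$ (if $m\ge n+2$ then $\mathfrak{z}e_m^\alpha\in\mathcal{K}_{m-1}\subseteq\mathcal{K}_{n+1}\perp e_n^\alpha$, and symmetrically for $m\le n-2$). Thus the matrix of $\mathfrak{z}$ is tridiagonal, $\mathfrak{z}e_n^\alpha=a_n(\alpha)e_{n-1}^\alpha+b_n(\alpha)e_n^\alpha+a_{n+1}(\alpha)e_{n+1}^\alpha$.

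Next I would reduce everything to the base point $n=0$. Applying $\mathfrak{z}$ to $e_n^\alpha=b^n e_0^{\alpha\mu^{-n}}$ and using $b^n e_{\pm1}^{\alpha\mu^{-n}}=e_{n\pm1}^\alpha$, the known base relation $\mathfrak{z}e_0^\beta=a_0(\beta)e_{-1}^\beta+b_0(\beta)e_0^\beta+a_1(\beta)e_1^\beta$ with $\beta=\alpha\mu^{-n}$ yields $a_n(\alpha)=a_0(\alpha\mu^{-n})$ and $b_n(\alpha)=b_0(\alpha\mu^{-n})$ for every $n$ (the remaining matching of coefficients forces the consistency identity $a_1(\beta)=a_0(\beta\mu^{-1})$). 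So the asserted formulas hold with $\mathcal{A}:=a_0$ and $\mathcal{B}:=b_0$, and it remains to compute these two functions explicitly.

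For $\mathcal{A}(\alpha)=a_0(\alpha)=\langle\mathfrak{z}e_0^\alpha,e_{-1}^\alpha\rangle$, I would write $e_0^\alpha=k^\alpha/\sqrt{k^\alpha(0)}$ and $e_{-1}^\alpha=b^{-1}k^{\alpha\mu}/\sqrt{k^{\alpha\mu}(0)}$; since $|b|=1$ on $\T$ this equals $\frac{1}{\sqrt{k^\alpha(0)k^{\alpha\mu}(0)}}\langle\mathfrak{z}(bk^\alpha),k^{\alpha\mu}\rangle$. Now $bk^\alpha\in H^2_0(\alpha\mu)=bH^2(\alpha)$, so by the invariance property $\mathfrak{z}(bk^\alpha)\in H^2(\alpha\mu)$, and the reproducing property of $k^{\alpha\mu}$ gives $\langle\mathfrak{z}(bk^\alpha),k^{\alpha\mu}\rangle=(\mathfrak{z}b)(0)\,k^\alpha(0)$, whence $\mathcal{A}(\alpha)=(\mathfrak{z}b)(0)\sqrt{k^\alpha(0)/k^{\alpha\mu}(0)}$; this is positive because $(\mathfrak{z}b)(0)=(\zeta\mathfrak{z})(0)\cdot(b/\zeta)(0)>0$. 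For $\mathcal{B}(\alpha)=b_0(\alpha)$ I would first cancel the pole of $\mathfrak{z}e_0^\alpha$ at the origin: the orthogonal projection of $\mathfrak{z}e_0^\alpha$ onto $\mathcal{K}_0=H^2(\alpha)$ is $g:=\mathfrak{z}e_0^\alpha-a_0(\alpha)e_{-1}^\alpha=b_0(\alpha)e_0^\alpha+a_1(\alpha)e_1^\alpha\in H^2(\alpha)$, so $b_0(\alpha)=\langle g,e_0^\alpha\rangle=g(0)/\sqrt{k^\alpha(0)}$ by the reproducing kernel of $H^2(\alpha)$. Expanding $\mathfrak{z}$, $b$, $k^\alpha$, $k^{\alpha\mu}$ in Laurent/Taylor series about $\zeta=0$, the $\zeta^{-1}$–coefficients of $\mathfrak{z}e_0^\alpha$ and $a_0(\alpha)e_{-1}^\alpha$ cancel (recovering the formula for $\mathcal{A}$) and reading off the constant term of $g$, after identifying the relevant combination of expansion coefficients with $(\mathfrak{z}b)'(0)/b'(0)$, produces exactly
\[
\mathcal{B}(\alpha)=\frac{(\mathfrak{z}b)(0)}{b'(0)}\left[\frac{(k^\alpha)'(0)}{k^\alpha(0)}-\frac{(k^{\alpha\mu})'(0)}{k^{\alpha\mu}(0)}\right]+\frac{(\mathfrak{z}b)'(0)}{b'(0)}.
\]

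I expect the main obstacle to be the bookkeeping in this last step, namely keeping the poles of $\mathfrak{z}$, $b^{-1}$, and $e_{-1}^\alpha$ at the origin under control so that the diagonal entry is computed from the genuine $H^2(\alpha)$–function $g$ rather than the formal product $\mathfrak{z}e_0^\alpha$; once the chain $\{\mathcal{K}_n\}$ and the identity $e_n^\alpha=b^n e_0^{\alpha\mu^{-n}}$ are in hand, tridiagonality and the covariance $a_n(\alpha)=\mathcal{A}(\alpha\mu^{-n})$, $b_n(\alpha)=\mathcal{B}(\alpha\mu^{-n})$ are essentially formal, and the remainder is a reproducing-kernel evaluation.
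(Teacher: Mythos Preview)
Your argument is correct. The paper does not supply a proof of this proposition in the text; it is quoted from \cite{VY14} as part of the summary in Subsection~3.1. What the paper does provide is the structural setup immediately preceding the statement: the explicit formula \eqref{e.h2alphaonb} for $e_n^\alpha$ when $n\ge 0$, the construction of $e_{-1}^\alpha$ via the three-term relation for $\mathfrak{z}e_0^\alpha$, and the observation that one then iterates. Your proof uses exactly this scaffolding, supplying in addition the reproducing-kernel computation for $\mathcal{A}(\alpha)$ and the Laurent expansion at $\zeta=0$ for $\mathcal{B}(\alpha)$, and both computations check out (in particular the $b''(0)$ terms in the constant coefficient of $g$ cancel as they should). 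This is the natural approach and is consonant with what is sketched in the paper and carried out in \cite{VY14}.
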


With the dual Hardy space
$$
b \tilde H^2(\mu^{-1} \alpha^{-1} \nu) := \Delta \overline{L^2(\alpha) \ominus H^2(\alpha)}
$$
and the dual basis
\begin{equation}\label{e.dualbasisdef}
b(\zeta) \tilde e_n^\alpha(\zeta) = \Delta(\zeta) \overline{e^\alpha_{-n-1}(\zeta)}, \quad \zeta \in \T,
\end{equation}
we obtain the Jacobi matrix $\tau J = J(\tilde H^2(\mu^{-1} \alpha^{-1} \nu))$, whose coefficients are related to those of $J = J(H^2(\alpha))$ by
$$
\tau a_n = a_{-n}, \quad \tau b_n = b_{-n-1}.
$$
The following result is \cite[Theorem~2.11]{VY14}.

\begin{prop}\label{e.Jisreflectionless}
The Jacobi matrix $J = J(H^2(\alpha))$ is reflectionless, that is, it belongs to $J(\tilde E)$. Moreover, we have
$$
r_+(\mathfrak{z}(\zeta)) = - \frac{e_0^\alpha(\zeta)}{a_0 e_{-1}^\alpha(\zeta)}, \quad r_-(\mathfrak{z}(\zeta)) = - \frac{\tilde e_0^\alpha(\zeta)}{a_0 \tilde e_{-1}^\alpha(\zeta)}.
$$
\end{prop}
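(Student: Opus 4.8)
The plan is to establish the two explicit formulas for $r_\pm(\mathfrak z(\zeta))$ first; once these are in hand, both reflectionlessness and the membership $J\in J(\tilde E)$ follow with little extra work. The guiding principle is that evaluating functions at a point $\zeta\in\D$ converts the operator-theoretic three-term relation of Proposition~\ref{p.fromh2alphatoJ} into a scalar three-term recurrence, whose $\ell^2$-solution carries the half-line Weyl--Titchmarsh function, and that $|b|=|\Delta|=1$ on $\T$ turns the duality \eqref{e.dualbasisdef} into a symmetry relating $r_+$ and $r_-$ on $\tilde E$.

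\emph{Step 1: the formula for $r_+$.} Fix $\zeta\in\D$ with $\mathfrak z(\zeta)\in\tilde{\mathcal D}$, $\mathfrak z(\zeta)\notin\sigma(J)$, and consider the sequence $(e_n^\alpha(\zeta))_{n\ge 0}$. Since $\{e_n^\alpha:n\ge 0\}$ is an orthonormal basis of $H^2(\alpha)$ and evaluation at $\zeta$ is a bounded functional on $H^2(\alpha)\subseteq H^2$, Parseval gives $\sum_{n\ge 0}|e_n^\alpha(\zeta)|^2<\infty$, so this sequence lies in $\ell^2(\Z_+)$. Evaluating the relation of Proposition~\ref{p.fromh2alphatoJ}, $\mathfrak z e_n^\alpha=a_n(\alpha)e_{n-1}^\alpha+b_n(\alpha)e_n^\alpha+a_{n+1}(\alpha)e_{n+1}^\alpha$, at the point $\zeta$ shows that $\psi_n:=e_n^\alpha(\zeta)$ solves the difference equation of $J_+$ for every $n\ge 1$; being $\ell^2$ at $+\infty$, it is (a multiple of) the Weyl solution of $J_+$ at energy $\mathfrak z(\zeta)$. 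The same relation at $n=0$ reads $\mathfrak z(\zeta)e_0^\alpha(\zeta)=a_0(\alpha)e_{-1}^\alpha(\zeta)+b_0(\alpha)e_0^\alpha(\zeta)+a_1(\alpha)e_1^\alpha(\zeta)$, which identifies $e_{-1}^\alpha(\zeta)$ with the value $\psi_{-1}$ of this solution at the virtual site $-1$. Inserting this into the standard half-line resolvent identity $\langle(J_+-z)^{-1}\delta_0,\delta_0\rangle=-\psi_0/(a_0(\alpha)\psi_{-1})$ and recalling \eqref{e.rplusandJ}, we obtain $r_+(\mathfrak z(\zeta))=-e_0^\alpha(\zeta)/(a_0 e_{-1}^\alpha(\zeta))$, valid as an identity of meromorphic functions of $\zeta$ (the few $\zeta$ where $\psi$ degenerates or $\psi_{-1}=0$ are handled by analytic continuation).

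\emph{Step 2: the formula for $r_-$.} Apply Step~1 verbatim to the dual Hardy space $\tilde H^2(\mu^{-1}\alpha^{-1}\nu)$, whose associated orthonormal basis is $\{\tilde e_n^\alpha\}$ by \eqref{e.dualbasisdef} and whose Jacobi matrix is $\tau J$. This yields $r_+^{\tau J}(\mathfrak z(\zeta))=-\tilde e_0^\alpha(\zeta)/(\tau a_0\cdot\tilde e_{-1}^\alpha(\zeta))=-\tilde e_0^\alpha(\zeta)/(a_0\tilde e_{-1}^\alpha(\zeta))$, since $\tau a_0=a_{-0}=a_0$. It remains to identify $r_-^{J}$ with $r_+^{\tau J}$: the reflection $n\mapsto -n-1$ is a unitary $U:\ell^2(\Z_-)\to\ell^2(\Z_+)$ with $U\delta_{-1}=\delta_0$, and the relations $\tau a_n=a_{-n}$, $\tau b_n=b_{-n-1}$ say exactly that $U J_-^{J} U^*=(\tau J)_+$. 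Hence, by \eqref{e.rminusandJ} and \eqref{e.rplusandJ},
\[
r_-^{J}(z)=\langle (J_-^{J}-z)^{-1}\delta_{-1},\delta_{-1}\rangle=\langle ((\tau J)_+-z)^{-1}\delta_0,\delta_0\rangle=r_+^{\tau J}(z),
\]
and the asserted formula for $r_-$ follows.

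\emph{Step 3: reflectionlessness, and the main obstacle.} That $\sigma(J)=\tilde E$ is immediate, since $J$ is unitarily equivalent to multiplication by $\mathfrak z$ on $L^2(\alpha)$, whose spectrum is the essential range of the boundary values of $\mathfrak z$, namely $\tilde E$. For reflectionlessness, restrict \eqref{e.dualbasisdef} to $\T$: as $|b|=|\Delta|=1$ there, $\tilde e_0^\alpha=(\Delta/b)\overline{e_{-1}^\alpha}$ and $\tilde e_{-1}^\alpha=(\Delta/b)\overline{e_0^\alpha}$ on $\T$, so comparing the formulas of Steps~1 and~2 and passing to non-tangential boundary values (with $a_0$ real) gives $a_0^2\,r_-(x+i0)\,\overline{r_+(x+i0)}=1$ for a.e.\ $x\in\tilde E$. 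By Remark~\ref{r.mplusminusrplusminuscorrespondence} this identity is equivalent to $m_-(\lambda+i0)=-\overline{m_+(\lambda+i0)}$ for a.e.\ $\lambda\in E$, i.e.\ to the reflectionless condition for $J$ on $\tilde E$; hence $J\in J(\tilde E)$. The step I expect to require the most care is Step~2: one must be sure that $\{\tilde e_n^\alpha\}$ really is the orthonormal basis that Proposition~\ref{p.fromh2alphatoJ} attaches to $\tilde H^2(\mu^{-1}\alpha^{-1}\nu)$, and that the character bookkeeping (the shift by $\mu$ inside that proposition, together with the passage from $\alpha$ to $\mu^{-1}\alpha^{-1}\nu$) combines with the reflection $n\mapsto -n-1$ so that the connecting coefficient $a_0$ agrees on both sides. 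Once the two half-line $m$-functions are correctly expressed through $e_n^\alpha$ and $\tilde e_n^\alpha$, reflectionlessness is essentially a one-line consequence of $|b|=|\Delta|=1$ on $\T$, and the residual analytic points (nontriviality of the Weyl solution $(e_n^\alpha(\zeta))_{n\ge 0}$, existence of boundary values) are routine Hardy-space facts.
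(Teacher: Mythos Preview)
The paper does not give its own proof of this proposition; it is quoted as \cite[Theorem~2.11]{VY14}. Your argument is essentially the standard one and is correct in outline: evaluating the three-term relation at $\zeta$ produces the $\ell^2$ Weyl solution on $\Z_+$ (via boundedness of point evaluation on $H^2(\alpha)$), which yields the $r_+$ formula; duality gives $r_-$; and $|b|=|\Delta|=1$ on $\T$ together with \eqref{e.dualbasisdef} collapses the two formulas to $a_0^2\, r_-\,\overline{r_+}=1$ a.e.\ on $\tilde E$.

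One correction in Step~3: do not route through Remark~\ref{r.mplusminusrplusminuscorrespondence} and the Schr\"odinger functions $m_\pm$. At this point of the argument $J=J(H^2(\alpha))$ is constructed directly from the Hardy space; no continuum operator has been attached to it yet (that bijection is only set up afterwards in Section~\ref{s.3}.2). The identity $a_0^2 r_-(x+i0)\,\overline{r_+(x+i0)}=1$ \emph{is} the reflectionless condition \eqref{e.jacobimatrixisreflectionless} for $J$, so you should cite that directly. Also in Step~2, note that the paper already records $\tau J=J(\tilde H^2(\mu^{-1}\alpha^{-1}\nu))$ with basis \eqref{e.dualbasisdef} as part of the setup imported from \cite{VY14}, so the ``character bookkeeping'' you flag as the main obstacle is in fact given.
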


\bigskip

Propositions~\ref{p.fromh2alphatoJ} and \ref{e.Jisreflectionless} establish a way of going from a space $H^2(\alpha)$ to a Jacobi matrix $J = J(H^2(\alpha)) \in J(\tilde E)$. Let us now discuss the other direction, that is, we start with a Jacobi matrix $J \in J(\tilde E)$ and wish to associate a space $H^2(\alpha)$.

The first step is to recall the description of $J(\tilde E)$ as a torus of dimension given by the number of gaps of $\tilde E$ (which is infinite in our present situation). Given a Jacobi matrix $J = J(\{ a_n, b_n \}) \in J(\tilde E)$ acting in $\ell^2(\Z)$, it is well known that the vectors $\{ \delta_{-1} , \delta_0 \}$ are cyclic for $J$. The matrix resolvent function is given by
$$
\begin{pmatrix} R_{-1,-1}(z) & R_{-1,0}(z) \\ R_{0,-1}(z) & R_{0,0}(z) \end{pmatrix} = \mathcal{E}^* (J-z)^{-1} \mathcal{E},
$$
where $\mathcal{E} : \C^2 \to \ell^2(\Z)$ is defined by
$$
\mathcal{E} \begin{pmatrix} c_{-1} \\ c_0 \end{pmatrix} = c_{-1} \delta_{-1} + c_0 \delta_0.
$$
The spectral theorem yields the existence of a $2 \times 2$ matrix measure $d\sigma$ such that
\begin{equation}\label{e.rzdsigmarelation}
R(z) = \int \frac{d\sigma(x)}{x - z}.
\end{equation}
With the half-line restrictions $J_\pm = P_{\ell^2(\Z_\pm)} J P_{\ell^2(\Z_\pm)}^*$ of $J$ and the associated Weyl-Titchmarsh functions
$$
r_+(z) := \langle (J_+-z)^{-1} \delta_0 , \delta_0 \rangle_{\ell^2(\Z_+)}, \quad r_-(z) := \langle (J_--z)^{-1} \delta_{-1} , \delta_{-1} \rangle_{\ell^2(\Z_-)}
$$
we have
$$
\begin{pmatrix} R_{-1,-1}(z) & R_{-1,0}(z) \\ R_{0,-1}(z) & R_{0,0}(z) \end{pmatrix} = \begin{pmatrix} r_-^{-1}(z) & a_0 \\ a_0 & r_+^{-1}(z) \end{pmatrix}^{-1},
$$
and in particular
\begin{align}
\label{e.resolventrplusminus1} - \frac{1}{R_{0,0}(z)} & = - \frac{1}{r_+(z)} + a_0^2 r_-(z), \\
\label{e.resolventrplusminus2} R_{-1,-1}(z) & = r_+(z)^{-1} r_-(z) R_{0,0}(z).
\end{align}
The function $R_{0,0}$ is clearly of Nevanlinna class, it is analytic in $\bar \C \setminus \tilde E$, and real in $\R \setminus \tilde E$. Since $J \in J(\tilde E)$, we have
\begin{equation}\label{e.jacobimatrixisreflectionless}
\frac{1}{r_+(x + i0)} = \overline{a_0^2 r_-(x + i0)} \quad \text{ for Lebesgue almost every}\ x \in \tilde E,
\end{equation}
and therefore $R_{0,0}$ is purely imaginary almost everywhere on $\tilde E$. This leads to the formula
\begin{equation}\label{e.reflectionlessresolvent}
R_{0,0}(z) = \frac{-1}{\sqrt{(z - z_0^-)(z - z_0^+)}} \prod_{k \ge 1} \frac{z - x_k}{\sqrt{(z - z_k^-)(z - z_k^+)}},
\end{equation}
where $x_k \in [z_k^-, z_k^+]$ is the unique point at which the argument of $R_{0,0}(x+i0)$ switches from $\pi$ to $0$. With this terminology, the space of divisors, $D(\tilde E)$, is defined as follows:
\begin{equation}\label{e.jacobidivisors}
D(\tilde E) = \{ (x_k, \varepsilon_k) : x_k \in [z_k^-, z_k^+], \; \varepsilon_k \in \{ 1, -1 \}, \; k \ge 1 \},
\end{equation}
with the identifications $(x_k^-,-1) = (x_k^-,1)$ and $(x_k^+,-1) = (x_k^+,1)$. Thus, $D(\tilde E) = \T^\infty$. The map from $J(\tilde E)$ to $D(\tilde E)$ is defined as follows. Given $J \in J(\tilde E)$, the $x_k$, $k \ge 1$, are defined by \eqref{e.reflectionlessresolvent}. The $\varepsilon_k$, $k \ge 1$, need only be defined if $x_k \in (z_k^-, z_k^+)$. We set $\varepsilon_k = 1$ (resp., $\varepsilon_k = -1$) if $x_k$ is a pole of $1/r_+$ (resp.,  a pole of $a_0^2 r_-$) (this is well defined since by \eqref{e.resolventrplusminus1} at least one of these functions has a pole at $x_k$ and by \eqref{e.resolventrplusminus2} and analyticity of $R_{-1,-1}$ at $x_k$ at most one of them has a pole at $x_k$). By our assumptions \eqref{e.econtainshalfline} and \eqref{e.discreteomegaks}, every $J \in J(\tilde E)$ has purely absolutely continuous spectrum, which is given by $\tilde E$, and hence by \cite[Lemma~3.2]{VY14}, the map $D : J(\tilde E) \to D(\tilde E)$ defined above is a bijection (and in fact a homeomorphism with respect to the strong operator topology on $J(\tilde E)$ and the product topology on the infinite product of circles $D(\tilde E)$).

The second ingredient is the construction of the generalized Abel map
\begin{equation}\label{e.genabelmap}
\tilde \pi : J(\tilde E) \to \Gamma^*.
\end{equation}
The following result is \cite[Lemma~4.5]{VY14}:

\begin{lemma}
Given $J = J(\{ a_n, b_n \}) \in J(\tilde E)$, there is a unique factorization
\begin{equation}\label{e.rpluse0e-1factorization}
r_+ \circ \mathfrak{z} = - \frac{1}{a_0} \frac{e_0}{e_{-1}}
\end{equation}
such that
\begin{align}\label{e.wronskianidentity}
a_0(e_{-1}(\zeta) & \overline{e_0(\zeta)} - e_0(\zeta) \overline{e_{-1}(\zeta)}) \\
\nonumber & = \sqrt{(\mathfrak{z}(\zeta) - z_0^-) (\mathfrak{z}(\zeta) - z_0^+)} \prod_{k \ge 1} \frac{\sqrt{(\mathfrak{z}(\zeta) - z_k^-) (\mathfrak{z}(\zeta) - z_k^+)}}{(\mathfrak{z}(\zeta) - c_k)}
\end{align}
for $\zeta \in \T$, where $e_0$ and $be_{-1}$ are of Smirnov class with mutually simple singular parts and $e_0(0) > 0$. Moreover, with the divisor $D(J) = \{ (x_k, \varepsilon_k) : k \ge 1 \}$ associated with $J$ and
$$
W(z) = \prod_{k \ge 1} \frac{z - x_k}{z - c_k},
$$
we have
\begin{equation}\label{e.ezerozetaformula}
e_0(\zeta) = \left( \prod_{k \ge 1} b_{x_k}^{(1 + \varepsilon_k)/2} \right) \sqrt{\frac{(W \circ \mathfrak{z})\Delta(\zeta)}{\prod_{k \ge 1} b_{x_k}(\zeta)}}.
\end{equation}
\end{lemma}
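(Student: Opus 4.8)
The plan is constructive. This lemma is an inverse-spectral statement --- from the reflectionless Jacobi matrix $J$ one must recover the scalar factorization of its Weyl function $r_+$ over the uniformization $\mathfrak{z}\colon\D\to\tilde{\mathcal D}$ --- so I would \emph{define} the two functions by fiat and then check all required properties. Concretely: set $e_0$ to be the right-hand side of \eqref{e.ezerozetaformula}, and set $e_{-1}:=-\frac{1}{a_0}\cdot\frac{e_0}{r_+\circ\mathfrak{z}}$, so that \eqref{e.rpluse0e-1factorization} holds automatically. It then remains to verify, in order: (i) $e_0$ is well defined and of Smirnov class with Blaschke part $\prod_{\varepsilon_k=1}b_{x_k}$; (ii) $be_{-1}$ is of Smirnov class with singular part coprime to that of $e_0$, with $(be_{-1})(0)>0$ and $\|e_{-1}\|=1$; (iii) the Wronskian identity \eqref{e.wronskianidentity} holds on $\T$; and (iv) the factorization is unique.

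For (i): each $\mathfrak{z}^{-1}(x_k)$ is a single $\Gamma$-orbit obeying the Blaschke condition, since $x_k\in[z_k^-,z_k^+]$ gives $G(x_k,\infty)\le G(c_k,\infty)$ and hence $\sum_kG(x_k,\infty)<\infty$ by the Widom condition \eqref{pyu3}; so the $b_{x_k}$ exist. Writing, for each $k$, $\mathfrak{z}-x_k=(\mathrm{const})\,\frac{b_{x_k}}{b}\cdot(\text{outer})$ and $\mathfrak{z}-c_k=(\mathrm{const})\,\frac{b_{c_k}}{b}\cdot(\text{outer})$ (the poles sitting at $\mathfrak{z}^{-1}(\infty)=\{b=0\}$), and multiplying over $k$ --- the product converging because the gap lengths of $\tilde E$ are summable --- one gets $W\circ\mathfrak{z}=(\mathrm{const})\,\frac{\prod_kb_{x_k}}{\Delta}\cdot(\text{outer})$ with $\Delta$ from \eqref{e.Deltadefinition}. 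Hence the radicand $\frac{(W\circ\mathfrak{z})\Delta}{\prod_kb_{x_k}}$ in \eqref{e.ezerozetaformula} is, up to a unimodular constant and a singular inner factor governed by the Widom geometry, an outer function; it is in any case zero- and pole-free, so it has a single-valued square root of Smirnov class. Multiplying by $\prod_kb_{x_k}^{(1+\varepsilon_k)/2}=\prod_{\varepsilon_k=1}b_{x_k}$ and fixing the unimodular constant so that $e_0(0)>0$ produces $e_0$, and on $\T$ this yields $|e_0|^2=W\circ\mathfrak{z}$ since there $|b_{x_k}|=|\Delta|=1$. For the Smirnov part of (ii): in $be_{-1}=-\frac{1}{a_0}\,b\,e_0/(r_+\circ\mathfrak{z})$ the simple zero of $r_+\circ\mathfrak{z}$ at $\mathfrak{z}^{-1}(\infty)$ (where $r_+(z)\sim -1/z$ and $\mathfrak{z}(\zeta)\sim(\zeta\mathfrak{z})(0)/\zeta$) is cancelled by the simple zero of $b$, and the zeros of $r_+\circ\mathfrak{z}$ at $\{\mathfrak{z}^{-1}(x_k):\varepsilon_k=1\}$ by the Blaschke part of $e_0$; poles of $r_+$ in gaps only add zeros to $be_{-1}$.

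Step (iii) is the heart of the matter and is a short boundary computation. From $e_0/e_{-1}=-a_0(r_+\circ\mathfrak{z})$ and $|e_{-1}|^2=|e_0|^2/(a_0^2|r_+\circ\mathfrak{z}|^2)$ on $\T$,
\[
a_0\bigl(e_{-1}\overline{e_0}-e_0\overline{e_{-1}}\bigr)=a_0|e_{-1}|^2\Bigl(\tfrac{\overline{e_0}}{\overline{e_{-1}}}-\tfrac{e_0}{e_{-1}}\Bigr)=2i\,a_0^2|e_{-1}|^2\,\Im(r_+\circ\mathfrak{z})=-2i\,|e_0|^2\,\Im\tfrac{1}{r_+\circ\mathfrak{z}}.
\]
Substituting reflectionlessness \eqref{e.jacobimatrixisreflectionless} into \eqref{e.resolventrplusminus1} gives $-1/R_{0,0}(x+i0)=-1/r_+(x+i0)+\overline{1/r_+(x+i0)}$ on $\tilde E$, i.e.\ $\Im(1/r_+)=\frac{1}{2i}\cdot\frac{1}{R_{0,0}}$ there; hence the display equals $-|e_0|^2/(R_{0,0}\circ\mathfrak{z})$. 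Now insert $|e_0|^2=W\circ\mathfrak{z}=\prod_k\frac{\mathfrak{z}-x_k}{\mathfrak{z}-c_k}$ together with the reciprocal of \eqref{e.reflectionlessresolvent}; the factor $\prod_k(\mathfrak{z}-x_k)$ cancels and one is left with precisely the right-hand side of \eqref{e.wronskianidentity} (with the branches of the square roots inherited from \eqref{e.reflectionlessresolvent}). Running this computation backwards shows that \emph{any} admissible pair must satisfy $|e_0|^2=W\circ\mathfrak{z}$ on $\T$; together with the factorization equation, the coprimality of the singular parts, and the fact that the Blaschke part of $e_0$ is forced to be $\prod_{\varepsilon_k=1}b_{x_k}$ by the prescribed zeros of $r_+$ in $\tilde{\mathcal D}$ and the pole at $\infty$, while the normalizations $e_0(0)>0$ and $(be_{-1})(0)>0$ remove the last unimodular ambiguity, this gives (iv) and \eqref{e.ezerozetaformula} in every such factorization. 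The normalization $\|e_{-1}\|=1$ is then extracted by a residue/Parseval computation from the product formula for $R_{0,0}$ and the fact that the spectral measure of $\delta_{-1}$ for $J$ is a probability measure (equivalently $\mathcal{E}^*\mathcal{E}=I$ in \eqref{e.rzdsigmarelation}).

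\textbf{Main obstacle.} The analytic bookkeeping above is routine \emph{once the correct inner-outer factorizations in $\tilde{\mathcal D}$ are available}; the genuinely delicate point is exactly this factorization theory in a Widom domain that need not satisfy DCT. Concretely, one must control the singular inner part of the radicand in \eqref{e.ezerozetaformula} --- and check that whatever survives is coprime to the singular part of $be_{-1}$ --- and justify $\|e_{-1}\|=1$ at the level of the character-automorphic $L^2$-space. This is precisely where the full strength of Hasumi \cite{H83} and the machinery developed in \cite{VY14} is needed.
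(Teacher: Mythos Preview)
The paper does not prove this lemma; it is quoted verbatim as \cite[Lemma~4.5]{VY14}, so there is no in-paper argument to compare against. Your constructive approach --- define $e_0$ by the formula \eqref{e.ezerozetaformula}, set $e_{-1}=-e_0/(a_0\,r_+\circ\mathfrak z)$, and verify --- is the natural one and is essentially what \cite{VY14} does. The boundary Wronskian computation in your step (iii) is correct and is indeed the heart of the matter: reflectionlessness turns $-2i\,|e_0|^2\,\Im(1/r_+)$ into $-|e_0|^2/R_{0,0}$, and then \eqref{e.reflectionlessresolvent} together with $|e_0|^2=W\circ\mathfrak z$ collapses to the right-hand side of \eqref{e.wronskianidentity}.

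Two remarks. First, you are over-proving: the lemma as stated does \emph{not} assert $(be_{-1})(0)>0$ or $\|e_{-1}\|=1$; those normalizations belong to the subsequent identification of $e_0,e_{-1}$ with the basis vectors $e_0^\alpha,e_{-1}^\alpha$, not to this factorization lemma. You can drop the last sentence of your proof and the corresponding clause in (ii). Second, your ``Main obstacle'' is honest but slightly misdirected. In your step (i) you hedge that the radicand is outer ``up to \dots\ a singular inner factor governed by the Widom geometry''; in fact the radicand \emph{is} outer, because $W\circ\mathfrak z$ has inner part exactly $\prod_k b_{x_k}/\Delta$ (this is what your own computation of the inner-outer factorization of $\mathfrak z-x_k$ and $\mathfrak z-c_k$ shows, once you observe that $\mathfrak z-x_0$ is outer for $x_0\in\R$ since $\log|\mathfrak z-x_0|$ has a harmonic majorant and the corresponding Blaschke product has the right zeros). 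With that in hand the coprimality of singular parts is automatic --- $e_0$ has trivial singular inner part --- and the uniqueness argument in (iv) goes through cleanly: the outer part of $e_0$ is fixed by $|e_0|^2=W\circ\mathfrak z$, the Blaschke part is forced to be $\prod_{\varepsilon_k=1}b_{x_k}$ by the requirement that $be_{-1}$ be holomorphic and coprime to $e_0$, and $e_0(0)>0$ kills the unimodular constant.
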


With the function $e_0$ from this lemma, the generalized Abel map $\tilde \pi$ in \eqref{e.genabelmap} is defined by sending $J$ to the character $\alpha \in \Gamma^*$ of $e_0$. With this $\alpha$, the next result, which is \cite[Theorem~4.10]{VY14}, associates an intermediate (character-automorphic) Hardy space $H^2(\alpha)$ with $J$ and this accomplishes our goal.

\begin{prop}\label{p.fromJtoH2alpha}
Suppose $J = J(\{ a_n, b_n \}) \in J(\tilde E)$ and $d\sigma$ is the $2 \times 2$ matrix measure from \eqref{e.rzdsigmarelation}. Then the map
$$
\begin{pmatrix} P_{-1}(x) \\ P_0(x) \end{pmatrix} \mapsto f(\zeta) := e_{-1}(\zeta) P_{-1}(\mathfrak{z}(\zeta)) + e_0(\zeta) P_0(\mathfrak{z}(\zeta))
$$
is unitary from $L^2(\R,\C^2;d\sigma)$ to $L^2(\alpha)$, with $\alpha = \tilde \pi(J)$. Moreover, the composition map
$$
\mathcal{F} : \ell^2(\Z) \to L^2(\R,\C^2;d\sigma) \to L^2(\alpha)
$$
is such that $H_J^2 := \mathcal{F}(P_{\ell^2(\Z_+)}^*(\ell^2(\Z_+)))$ has the properties
$$
\check H^2(\alpha) \subseteq H_J^2 \subseteq \hat H^2(\alpha), \quad \mathfrak{z}(H_J^2)_0 \subseteq H_J^2.
$$
\end{prop}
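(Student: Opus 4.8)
\emph{Proof proposal.} The plan is to prove the two assertions in turn: (A) that the map $U\colon\binom{P_{-1}}{P_0}\mapsto e_{-1}\,(P_{-1}\circ\mathfrak z)+e_0\,(P_0\circ\mathfrak z)$ is unitary from $L^2(\R,\C^2;d\sigma)$ onto $L^2(\alpha)$; and (B) that $H^2_J:=\mathcal F(P^*_{\ell^2(\Z_+)}(\ell^2(\Z_+)))$ satisfies $\check H^2(\alpha)\subseteq H^2_J\subseteq\hat H^2(\alpha)$ and $\mathfrak z(H^2_J)_0\subseteq H^2_J$. I note first that by \eqref{e.rpluse0e-1factorization} the ratio $e_0/e_{-1}=-a_0\,(r_+\circ\mathfrak z)$ is $\Gamma$-automorphic, so $e_{-1}$ carries the same character $\alpha=\tilde\pi(J)$ as $e_0$; hence $U$ maps into $L^2(\alpha)$, and the boundary functions $|e_{-1}|^2$, $|e_0|^2$, $e_{-1}\overline{e_0}$ on $\T$ are $\Gamma$-invariant.

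For (A), the isometry of $U$ is equivalent to the identity of $2\times2$ matrix measures
\[
d\sigma \;=\; \mathfrak z_*\!\left(\begin{pmatrix}|e_{-1}|^2 & e_{-1}\overline{e_0}\\ e_0\overline{e_{-1}} & |e_0|^2\end{pmatrix}\frac{|d\zeta|}{2\pi}\right),
\]
in which the push-forward is read with $\mathfrak z$ two-to-one from $\T$ onto $\tilde E$, the two preimages of $x$ being the prime ends $x\pm i0$; the symmetry of $\tilde{\mathcal D}$ about $\R$ makes the two boundary values of $e_{-1}\overline{e_0}$ mutually conjugate, so the off-diagonal of the push-forward is real, in accord with $R(z)$ being symmetric and $d\sigma$ real symmetric. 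Both sides are purely absolutely continuous on $\tilde E$ — the left because $\sigma(J)=\tilde E$ is purely a.c., the right because $|e_j|^2\,d\omega_\infty\ll dx$ — so it suffices to match densities. By \eqref{e.jacobimatrixisreflectionless} fed into \eqref{e.resolventrplusminus1}--\eqref{e.resolventrplusminus2}, together with $1/(r_+\circ\mathfrak z)=-a_0\,e_{-1}/e_0$, every entry of $R(x\pm i0)$ is an explicit expression in $|e_0|^2$, $\Im(e_{-1}\overline{e_0})$ and the boundary values of $r_\pm$; the Wronskian identity \eqref{e.wronskianidentity} evaluates $2ia_0\,\Im(e_{-1}\overline{e_0})$ as the branch-of-square-root function $\mathcal W(z):=\sqrt{(z-z_0^-)(z-z_0^+)}\prod_{k\ge1}\frac{\sqrt{(z-z_k^-)(z-z_k^+)}}{z-c_k}$; and the classical formula identifies $1/|\mathcal W(x)|$, up to the normalizing constant, with the density of harmonic measure of $\tilde{\mathcal D}$ at $\infty$, which is exactly the Radon--Nikodym factor taking $\mathfrak z_*(\frac{|d\zeta|}{2\pi})$ to $dx$ on $\tilde E$. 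Matching the scalar densities of the $(0,0)$, $(-1,0)$ and $(-1,-1)$ entries closes the isometry. Surjectivity then follows because the range $\mathcal R$ of $U$ is closed and, as $\mathfrak z\cdot(e_j\,(p\circ\mathfrak z))=e_j\,((xp)\circ\mathfrak z)$ for $j\in\{-1,0\}$ and polynomials are uniformly dense on the compact $\tilde E$, $\mathcal R$ is a module over $C(\tilde E)$ acting by composition with $\mathfrak z$; if $h\perp\mathcal R$ in $L^2(\alpha)$ then $h\overline{e_{-1}}$ and $h\overline{e_0}$ have trivial character, descend to $\tilde E$, annihilate $C(\tilde E)$, hence vanish a.e., and \eqref{e.wronskianidentity} forbids $e_{-1}$ and $e_0$ from vanishing simultaneously a.e.\ on $\T$, so $h=0$. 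Composing $U$ with the canonical unitary $\ell^2(\Z)\to L^2(\R,\C^2;d\sigma)$ of Jacobi theory yields $\mathcal F$, with $\mathcal F\delta_{-1}=e_{-1}$, $\mathcal F\delta_0=e_0$, intertwining $J$ with multiplication by $\mathfrak z$.

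For (B), write $\mathcal F\delta_n=e_{-1}(P_{n,-1}\circ\mathfrak z)+e_0(P_{n,0}\circ\mathfrak z)$ with $P_{n,0},P_{n,-1}$ the Jacobi-recursion polynomials of degrees $n$ and $n-1$. Near $\zeta=0$ one has $\mathfrak z\sim(\mathfrak z b)(0)/b$; matching the expansion of \eqref{e.rpluse0e-1factorization} at $z=\infty$ against $r_+(z)=-z^{-1}-b_0z^{-2}-\cdots$ fixes $(be_{-1})(0)=(\mathfrak z b)(0)\,e_0(0)/a_0$ and, recursively, the higher coefficients of the power-of-$b$ expansion of $be_{-1}$ in terms of those of $e_0$ and the moments of $J_+$. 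A routine induction then shows that in $\mathcal F\delta_n$ all negative powers of $b$ cancel — so $\mathcal F\delta_n$ is of Smirnov class with no pole on $\mathfrak z^{-1}(\infty)$, whence $H^2_J\subseteq\hat H^2(\alpha)$ — and that $\mathcal F\delta_n(0)=0$ for every $n\ge1$. Consequently $(H^2_J)_0=\overline{\operatorname{span}}\{\mathcal F\delta_n:n\ge1\}$, and since multiplication by $\mathfrak z$ is tridiagonal in the basis $\{\mathcal F\delta_n\}$ it maps this space into $\overline{\operatorname{span}}\{\mathcal F\delta_n:n\ge0\}=H^2_J$, giving the invariance. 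The remaining inclusion $\check H^2(\alpha)\subseteq H^2_J$ follows by comparing singular inner parts: by \eqref{e.ezerozetaformula} the inner factor of $e_0$ relative to $\Delta$ is governed by the divisor $D(J)\in D(\tilde E)$, whose two extreme positions return exactly $\hat H^2(\alpha)$ and $\check H^2(\alpha)$, so an arbitrary $H^2_J$ lies between them; this is \cite[Theorem~4.10]{VY14}, to which I refer for the remaining details.

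I expect the main obstacle to be the measure identity $d\sigma=\mathfrak z_*(\,\cdots\,)$ of step (A): recognizing the square-root function of the Wronskian identity \eqref{e.wronskianidentity} as the harmonic-measure density of $\tilde{\mathcal D}$, and correctly handling the double cover of $\tilde E$ by $\T$ so that the complex boundary function $e_{-1}\overline{e_0}$ reproduces the real off-diagonal density $\tfrac1\pi\Im R_{-1,0}(x+i0)$. Everything else — surjectivity, Smirnov-class membership, the invariance via $\mathcal F\delta_n(0)=0$, and the lower sandwiching — reduces either to this identity or to routine asymptotic bookkeeping.
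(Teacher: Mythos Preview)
The paper does not prove this proposition at all: it is quoted verbatim as \cite[Theorem~4.10]{VY14}, preceded by the sentence ``the next result, which is \cite[Theorem~4.10]{VY14}, associates an intermediate (character-automorphic) Hardy space $H^2(\alpha)$ with $J$.'' So there is nothing in the paper to compare your argument against; your sketch already goes well beyond what the paper offers.

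That said, a few comments on your sketch itself. Your strategy for (A) --- reducing the isometry to the matrix-measure identity and invoking \eqref{e.wronskianidentity} to recognize the density of harmonic measure --- is exactly the mechanism used in \cite{VY14}. One imprecision: you describe $\mathfrak z$ as ``two-to-one from $\T$ onto $\tilde E$,'' but of course it is infinite-to-one; you mean two-to-one on a fundamental domain for $\Gamma$, and this should be said since the measure $|d\zeta|/2\pi$ you push forward is Lebesgue on $\T$, not on a fundamental domain. Your surjectivity argument via the $C(\tilde E)$-module structure is clean and correct. For (B), the upper inclusion and the invariance $\mathfrak z(H^2_J)_0\subseteq H^2_J$ are handled correctly: the base case $\mathcal F\delta_1$ holomorphic at $0$ follows from the residue match $(be_{-1})(0)=(\mathfrak z b)(0)\,e_0(0)/a_0$ you identified, and the tridiagonal recursion propagates both holomorphy and $\mathcal F\delta_n(0)=0$. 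The genuine gap is the lower inclusion $\check H^2(\alpha)\subseteq H^2_J$: your ``extreme divisor positions'' heuristic is not a proof, and you acknowledge this by deferring to \cite[Theorem~4.10]{VY14}. Since the paper under review does exactly the same thing --- it cites rather than proves --- your proposal is at least as complete as the paper's own treatment, and considerably more informative.
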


\bigskip

By Propositions~\ref{p.fromh2alphatoJ} and \ref{p.fromJtoH2alpha} we have maps $H^2(\alpha) \mapsto J(H^2(\alpha))$ and $J \mapsto H_J^2$. Volberg and Yuditskii also prove in \cite[Section~4]{VY14} that these maps are mutual inverses and hence we obtain a 1-1 correspondence between $J(\tilde E)$ and intermediate (character-automorphic) Hardy spaces $\{ H^2(\alpha) \}$.

Alternatively, $H^2(\alpha)$ can be defined via its reproducing kernel $k^\alpha(\zeta_1,\zeta_2)$. We note for later use that by \cite[Theorem~5.2]{VY14}, the reproducing kernel of $H^2(\alpha)$ is given by
\begin{equation}\label{e.h2alpharepkernelformula}
k^\alpha(\zeta_1,\zeta_2) = \frac{\begin{pmatrix} a_0 e_{-1}^\alpha(\zeta_1) & e_0^\alpha(\zeta_1) \end{pmatrix} \mathfrak{J} \begin{pmatrix} a_0 \overline{e_{-1}^\alpha(\zeta_2)} \\ \overline{e_0^\alpha(\zeta_2)} \end{pmatrix}}{\mathfrak{z}(\zeta_1) - \overline{\mathfrak{z}(\zeta_2)}},
\end{equation}
where
\begin{equation}\label{e.mathfrakJdef}
\mathfrak{J} = \begin{pmatrix} 0 & - 1 \\ 1 & 0 \end{pmatrix}.
\end{equation}

\subsection{Potentials in $q(E)$ and Character-Automorphic Hardy Spaces}

In this subsection we extend the bijection $J(\tilde E) \leftrightarrow \{ H^2(\alpha) \}$ discussed in the previous subsection to bijections
\begin{equation}\label{e.correspondence}
q(E) \leftrightarrow J(\tilde E) \leftrightarrow \{ H^2(\alpha) \}.
\end{equation}
Composing these two bijections, we obtain the desired bijection $q(E) \leftrightarrow \{ H^2(\alpha) \}$, which will be crucial to our study of the shift action on $q(E)$ in terms of the induced action on $\{ H^2(\alpha) \}$ in the next section.

\begin{prop}
There is a 1-1 correspondence between $q(E)$ and $J(\tilde E)$.
\end{prop}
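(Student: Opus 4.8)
The plan is to build the forward map out of Section~\ref{s.2}, to produce a candidate inverse by classical (continuum) inverse spectral theory after passing through the change of variables \eqref{e.changeofvariables}, and then to close the loop. \emph{Forward map.} Given $q\in q(E)$, Section~\ref{s.2} attaches to it the bounded self-adjoint operator $A=(L_q-\lambda_*)^{-1}$, the cyclic pair $\{e_-,e_+\}$ (Lemma~\ref{l.cyclic}), the $2\times2$ matrix measure $d\sigma$, and the two-sided Jacobi matrix $J=J(\{a_n\},\{b_n\})$ of Remark~\ref{r.fromqtoJ}, whose half-line Weyl functions $r_\pm$ and coupling $a_0$ are prescribed by \eqref{e.rminusdef}--\eqref{e.rplusdef} and \eqref{e.a0def}. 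One must check $J\in J(\tilde E)$. Cyclicity makes $J$ unitarily equivalent to $A$, so $\sigma(J)=\sigma(A)$, which under \eqref{e.changeofvariables} is precisely $\tilde E$. For reflectionlessness I would substitute the continuum relation $m_+(\lambda+i0)=-\overline{m_-(\lambda+i0)}$ (valid a.e.\ on $E$) into \eqref{e.mplustorplus}--\eqref{e.mminustorminus}; since \eqref{e.changeofvariables} exchanges the half-planes, a boundary value of $r_\pm$ on $\tilde E$ is conjugated relative to the corresponding boundary value of $m_\pm$, and a one-line computation then produces $\tfrac{1}{r_+(x+i0)}=\overline{a_0^2 r_-(x+i0)}$ a.e.\ on $\tilde E$, i.e.\ \eqref{e.jacobimatrixisreflectionless}. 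Thus $\Phi:q(E)\to J(\tilde E)$ is well defined.

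\emph{Inverse map.} Given $J\in J(\tilde E)$ with data $r_\pm$, $a_0$, the relations \eqref{e.rminusdef}--\eqref{e.rplusdef} are fractional-linear in $m_\pm(\lambda)$ with coefficients formed from $m_\pm(\lambda_*)$, $m_\pm'(\lambda_*)$; together with \eqref{e.a0def} these impose $K_+K_-=a_0^{-2}$ for $K_\pm:=(m_+(\lambda_*)+m_-(\lambda_*))/m_\pm'(\lambda_*)$. I would pin down the remaining constants by requiring the reconstructed $m_\pm$ to have the asymptotics of a genuine half-line Schr\"odinger $m$-function, namely $m_\pm(\lambda)=-\sqrt{-\lambda}+o(1)$ as $\lambda\to-\infty$ with vanishing constant term in the $1/\sqrt{-\lambda}$-expansion. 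Under \eqref{e.changeofvariables} the point $\lambda=-\infty$ becomes the band edge $z_0^-=0$ of $\tilde E$, at which $r_\pm$ has a square-root Puiseux expansion (forced by the reflectionless structure \eqref{e.reflectionlessresolvent}, \eqref{e.jacobimatrixisreflectionless}); matching the $\sqrt{\cdot}$-coefficient and the constant term against this expansion determines $K_\pm$ and then $m_\pm(\lambda_*)$ uniquely, the reflectionlessness of $J$ ensuring consistency of the two expressions obtained for $m_+(\lambda_*)+m_-(\lambda_*)$. With $m_\pm$ in hand I would verify that they are Herglotz (from the Herglotz property of $r_\pm$ and the sign of $a_0$) and belong to the class of $m$-functions of bounded-potential half-line Schr\"odinger operators with spectrum $E$ --- the correct analytic structure on $E$ and in the gaps being inherited, via \eqref{e.changeofvariables}, from that of $J$ --- then invoke the Gelfand--Levitan/Marchenko procedure to reconstruct $q|_{\R_\pm}$ from $m_\pm$, glue them, and conclude $q\in C_b(\R)$ using the regularity of potentials reflectionless on a set containing a half-line \cite{C89,K97,PR09}. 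A direct substitution back into \eqref{e.rminusdef}--\eqref{e.rplusdef}, \eqref{e.a0def} then shows $\Phi(\Psi(J))=J$; this is essentially forced by the way $\Psi$ was built.

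\emph{Closing the loop.} The map $\Phi$ is injective: if $\Phi(q_1)=\Phi(q_2)$ then $q_1,q_2$ share $r_\pm$ and $a_0$, hence by the reconstruction above they share $m_\pm$, hence the same half-line potentials, so $q_1=q_2$ (equivalently: a reflectionless $q$ is determined by its diagonal Green function $G(0,0;\cdot)=-(m_++m_-)^{-1}$, and $m_++m_-$ is manifestly a function of the data of $J$). Injectivity of $\Phi$ together with $\Phi\circ\Psi=\mathrm{id}$ then forces $\Psi\circ\Phi=\mathrm{id}$, so $\Phi$ and $\Psi$ are mutually inverse bijections $q(E)\leftrightarrow J(\tilde E)$. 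I expect the crux to be showing that $\Psi$ actually lands in $q(E)$: extracting from the reflectionless Jacobi data a pair of Herglotz functions that are bona fide bounded-potential Schr\"odinger $m$-functions with spectrum $E$, and in particular controlling the boundedness of the reconstructed potential across \eqref{e.changeofvariables}.
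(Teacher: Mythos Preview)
Your approach is essentially the paper's: the forward map is exactly Remark~\ref{r.fromqtoJ} plus the reflectionless check via \eqref{e.mplustorplus}--\eqref{e.mminustorminus}, and the inverse is, as in the paper, a fractional-linear inversion of $r_+$ to recover $m_+$, with constants fixed by the Schr\"odinger asymptotic $m_+(\lambda)=-\sqrt{-\lambda}+o(1)$. The paper is more direct on this last point: rather than solve for the auxiliary constants $m_\pm(\lambda_*)$, $m_\pm'(\lambda_*)$, it expands $r_+(-x^2)=c_0+c_1x+c_2x^2+\cdots$ (the Puiseux expansion at the band edge $z=0$ that you anticipated) and writes down the explicit closed formula
\[
m_+(\lambda)=\frac{c_1}{c_0-r_+(z)}-\frac{c_2}{c_1},
\]
after which the product/divisor representation of $m_+$ and the inverse spectral step are immediate.

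Two points in your write-up need attention. First, the gluing at $x=0$ is not free: having reconstructed $q_+$ on $\R_+$ from $m_+$ and $q_-$ on $\R_-$ from $m_-$, you must verify $q_-(0-)=q_+(0+)$. The references you cite address regularity of each half-line piece (or of a genuine whole-line reflectionless potential), not the matching of two independently built halves. The paper closes this via the trace formula
\[
q_\pm(0\pm)=-1+\sum_{k\ge 1}\big(\lambda_k^-+\lambda_k^+-2\lambda_k\big),
\]
which depends only on the common divisor data $\{\lambda_k\}$ and therefore gives the same value on both sides. Second, your parenthetical ``equivalently: a reflectionless $q$ is determined by its diagonal Green function $G(0,0;\cdot)=-(m_++m_-)^{-1}$'' is false: $m_++m_-$ fixes the $\lambda_k$ but not the signs $\varepsilon_k$ in the divisor, so it does not by itself determine $q$. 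Your primary injectivity argument (shared $r_\pm,a_0$ force shared $m_\pm$ because the asymptotic normalization pins the fractional-linear inversion uniquely) is correct and suffices; drop the parenthetical.
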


\begin{proof}
Given $q \in q(E)$, construct $J$ as in Remark~\ref{r.fromqtoJ}. Since $q \in q(E)$, we have
$$
\lim_{\varepsilon \downarrow 0} m_+(\lambda + i \varepsilon) = - \lim_{\varepsilon \downarrow 0} \overline{m_-(\lambda + i \varepsilon)} \quad \text{for almost every } \lambda \in E.
$$
By \eqref{e.mplustorplus}--\eqref{e.mminustorminus}, this implies that
$$
\lim_{\varepsilon \downarrow 0} a_0^2 r_+(z + i \varepsilon) = - \lim_{\varepsilon \downarrow 0} \frac{1}{\overline{r_-(z + i \varepsilon)}} \quad \text{for almost every } z \in \tilde E.
$$
That is, $J \in J(\tilde E)$, and hence we have a map $q(E) \to J(\tilde E)$.

Conversely, suppose we are given $J \in J(\tilde E)$. Denote the associated half-line Weyl-Titchmarsh functions by $r_\pm$, that is, we have \eqref{e.rminusandJ}--\eqref{e.rplusandJ} and \eqref{e.jacobimatrixisreflectionless}. The function $r_+(-x^2)$ is analytic in the vicinity of the origin, and hence we have an expansion
$$
r_+(-x^2) = c_0 + c_1 x + c_2 x^2 + \cdots.
$$
Note that $c_1 < 0$. Then, the function $m_+$ defined by
$$
m_+(\lambda) = \frac{c_1}{c_0 - r_+(z)} - \frac{c_2}{c_1}
$$
 obeys
$$
m_+(\lambda) = -\sqrt{-\lambda} + o(1), \quad \lambda\to -\infty.
$$
Therefore, it is of the form
$$
m_+(\lambda) = - \sqrt{-1 - \lambda} \prod_{k \ge 1} \frac{\sqrt{(\lambda - \lambda_k^-)(\lambda - \lambda_k^+)}}{\lambda - \lambda_k} + \sum_{k \ge 1, \, \lambda_k \in (\lambda_k^-, \lambda_k^+)} \frac{\rho_k \varepsilon_k}{\lambda_k - \lambda},
$$
where
$$
\rho_j = \sqrt{-1 - \lambda_j} \sqrt{(\lambda_j - \lambda_j^-)(\lambda_j - \lambda_j^+)} \prod_{k \ge 1, \, k \not= j} \frac{\sqrt{(\lambda_j - \lambda_k^-)(\lambda_j - \lambda_k^+)}}{\lambda_j - \lambda_k}
$$
and $\varepsilon_k \in \{ \pm 1 \}$.

Standard inverse spectral theory yields the existence of a bounded continuous (using that $\R_+ \subset E$) half-line potential $q_+$ on $\R_+$. In the same
way we can produce $q_-$ on $\R_-$ from $r_-$. Moreover the left and right limits of $q_\pm$ coincide due to the standard trace formula,
$$
q_-(0-) = q_+(0+) = -1 + \sum_{k \ge 1} \left( \lambda_k^- + \lambda_k^+ - 2 \lambda_k \right).
$$
We obtain the potential $q$ whose restrictions to $\R_\pm$ are given by $q_\pm$ which belongs to the class $q(E)$ by construction. The collection of data $\{ (\lambda_k , \varepsilon_k) : k \ge 1 \}$ is called the divisor associated with the potential $q$.
\end{proof}

As a corollary of the construction above, we have the following analog of Proposition~\ref{p.fromJtoH2alpha}.

\begin{theorem}\label{t,spectral}
For every $q \in q(E)$, there exists a unique $H^2(\alpha)$ such that there exists a unitary map
$$
\mathcal{F} : L^2(\R) \to L^2(\alpha)
$$
with $\mathcal{F}(L^2(\R_+)) = H^2(\alpha)$ and
$$
\mathcal{F} L_q f = \left( \lambda_* + \frac{1}{\mathfrak{z}} \right) \mathcal{F} f
$$
for every $f \in D(L_q)$. Conversely, every $H^2(\alpha)$ arises in this way.
\end{theorem}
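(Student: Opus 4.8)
The plan is to deduce the statement by composing the two correspondences already established, $q(E)\leftrightarrow J(\tilde E)$ and $J(\tilde E)\leftrightarrow\{H^2(\alpha)\}$, keeping careful track along the way of (i) the distinguished half-line subspace and (ii) the spectral operator.

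\textbf{Existence.} Given $q\in q(E)$, set $A=(L_q-\lambda_*)^{-1}$; this is a bounded self-adjoint operator with $\sigma(A)=\overline{\tilde E}$, and since $0\notin\tilde E$, multiplication by $1/x$ is a well-defined (possibly unbounded) self-adjoint operator on $L^2(d\sigma)$. By Lemma~\ref{l.cyclic} and Remark~\ref{r.fromqtoJ} there are a $2\times2$ matrix measure $d\sigma$ and a unitary $\mathcal{F}_A\colon L^2(\R)\to L^2(d\sigma)$ with $\mathcal{F}_A e_-=\binom{1}{0}$, $\mathcal{F}_A e_+=\binom{0}{1}$ and $\mathcal{F}_A A=x\,\mathcal{F}_A$; since $L_q=\lambda_*+A^{-1}$ this gives $\mathcal{F}_A L_q f=(\lambda_*+1/x)\,\mathcal{F}_A f$ for $f\in D(L_q)$. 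Let $J\in J(\tilde E)$ be the Jacobi matrix assigned to $q$ by the preceding proposition, realized on $\ell^2(\Z)$ by the unitary $\mathcal{F}_J\colon \ell^2(\Z)\to L^2(d\sigma)$ of Remark~\ref{r.fromqtoJ} (so $\mathcal{F}_J\delta_{-1}=\binom{1}{0}$, $\mathcal{F}_J\delta_0=\binom{0}{1}$, $\mathcal{F}_J J=x\,\mathcal{F}_J$). Finally, apply Proposition~\ref{p.fromJtoH2alpha} to this $J$: with $\alpha=\tilde\pi(J)$ it produces a unitary $U\colon L^2(d\sigma)\to L^2(\alpha)$ intertwining multiplication by $x$ with multiplication by $\mathfrak{z}$ (hence $\lambda_*+1/x$ with $\lambda_*+1/\mathfrak{z}$) and carrying $\mathcal{F}_J(\ell^2(\Z_+))$ onto the intermediate Hardy space $H_J^2=:H^2(\alpha)$. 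The composition $\mathcal{F}:=U\circ\mathcal{F}_A\colon L^2(\R)\to L^2(\alpha)$ then intertwines $L_q$ with multiplication by $\lambda_*+1/\mathfrak{z}$, and $\mathcal{F}(L^2(\R_+))=H^2(\alpha)$ as soon as we know that $\mathcal{F}_A(L^2(\R_+))=\mathcal{F}_J(\ell^2(\Z_+))$.

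\textbf{The subspace identity (the main obstacle).} This last equality is the one point calling for a genuine argument, because Section~\ref{s.2} directly computes only the full-line matrix resolvent $R(z)$; it is the ``continuum counterpart'' of the fact, recorded in Remark~\ref{r.fromqtoJ} on the Jacobi side, that $r_\pm$ of \eqref{e.rminusdef}--\eqref{e.rplusdef} are the half-line Weyl functions $\langle(A_\pm-z)^{-1}e_\pm,e_\pm\rangle$. By \eqref{e.Adecomposition} and the block form of $J$, multiplication by $x$ on $L^2(d\sigma)$ admits, relative to either transported half-line splitting, a decomposition ``block-diagonal plus rank-two coupling'' with one and the same constant $a_0$ of \eqref{e.a0def}; I would prove that such a decomposition of $(L^2(d\sigma),\ \text{multiplication by}\ x)$ relative to the cyclic pair $\binom{1}{0},\binom{0}{1}$ is unique. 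Concretely, $\mathcal{F}_A(L^2(\R_-))$ equals the closed span of $B_-^{\,n}\binom{1}{0}$, $n\ge 0$, where $B_-$ is the compression of multiplication by $x$ to it, and an induction shows that each $B_-^{\,n}\binom{1}{0}$ is a fixed polynomial in multiplication by $x$ applied to $\binom{1}{0}$ and $\binom{0}{1}$, with coefficients drawn from the moments of $d\sigma$ and the number $a_0$; hence this subspace depends only on $(d\sigma,a_0)$, and the identical computation on the $\ell^2(\Z)$ side produces the same subspace. Although elementary, this bookkeeping is where I expect the care to be required.

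\textbf{Uniqueness and converse.} For the converse, starting from any intermediate $H^2(\alpha)$ one applies Propositions~\ref{p.fromh2alphatoJ} and \ref{e.Jisreflectionless} to get $J(H^2(\alpha))\in J(\tilde E)$ and then the preceding proposition to get $q\in q(E)$; since the maps just used invert those of the existence step, running the construction on this $q$ returns $H^2(\alpha)$. For uniqueness of $H^2(\alpha)$: if $\mathcal{F}'\colon L^2(\R)\to L^2(\alpha')$ also realizes the conclusion for the same $q$, transport the canonical basis $\{\delta_n\}$ of $\ell^2(\Z)$ through $\mathcal{F}_J^{-1}\mathcal{F}_A$ and then through $\mathcal{F}'$ to obtain an orthonormal basis $\{f_n\}$ of $L^2(\alpha')$ with $\{f_n:n\ge0\}$ spanning $H^2(\alpha')$ and with multiplication by $\mathfrak{z}$ acting as $J=J(q)$; comparing with the basis $\{e_n^{\alpha'}\}$ of the Volberg--Yuditskii construction and using that one-sided Jacobi matrices are multiplicity free, one finds that the half-line Weyl functions and the coupling constant of $J(H^2(\alpha'))$ coincide with those of $J$, so $J(H^2(\alpha'))=J=J(H^2(\alpha))$, and injectivity of the correspondence $\alpha\mapsto J(H^2(\alpha))$ forces $\alpha'=\alpha$.
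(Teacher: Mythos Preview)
Your approach is precisely the one the paper has in mind: the theorem is stated there as ``a corollary of the construction above,'' meaning the composition $q(E)\to J(\tilde E)\to\{H^2(\alpha)\}$ together with the spectral representations from Remark~\ref{r.fromqtoJ} and Proposition~\ref{p.fromJtoH2alpha}. Your write-up supplies the details the paper omits.

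One small point in your subspace-identity argument: you assert that $\mathcal F_A(L^2(\R_-))$ equals the closed span of $\{B_-^n\binom{1}{0}\}_{n\ge 0}$, which amounts to the claim that $e_-$ is cyclic for $A_-$ in $L^2(\R_-)$. This is true but not entirely free, and you do not need it. It is cleaner to run the induction from the Jacobi side, where cyclicity of $\delta_0$ for $J_+$ (and $\delta_{-1}$ for $J_-$) is automatic: with $V=\mathcal F_J^{-1}\mathcal F_A$, use \eqref{e.Adecomposition} and the three-term recursion for $J$ to show $V^{-1}\delta_n\in L^2(\R_+)$ for all $n\ge 0$ (the key step being $A e_+=A_+e_++a_0e_-$, and for $n\ge 1$ that $V^{-1}\delta_n\perp e_\pm$ so $A\,V^{-1}\delta_n=A_+V^{-1}\delta_n$). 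Together with the symmetric statement on $\Z_-$ this gives $V^{-1}(\ell^2(\Z_\pm))\subseteq L^2(\R_\pm)$, and unitarity of $V$ plus the orthogonal decompositions then forces equality. This avoids any separate cyclicity check on the continuum side.
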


\begin{remark}\label{r.subspacechain}
Looking forward to the next section, one may ask about the image of $L^2(\R_{+,\ell}) = \{ f \in L^2(\R) : \mathrm{supp} (f) \subseteq [\ell,\infty) \}$ under the map $\mathcal{F}$. We will give the answer to this question in Theorem~\ref{t.subspacechain}.
\end{remark}

\begin{remark}\label{r.topologies}
Let us endow $q(E)$ with the compact-open topology (i.e., the topology of uniform convergence on compact subsets of $\R$), and similarly $J(\tilde E)$ with the topology of pointwise convergence of the coefficient sequences. The collection $D(E)$ of all divisors is equipped with the product topology, where each gap gives rise to a circle $\T$ (arising from $\lambda_k \in [\lambda_k^-, \lambda_k^+]$ and a choice of $\varepsilon_k = \pm 1$ when $\lambda_k \in (\lambda_k^-, \lambda_k^+)$), which carries the standard topology; compare \eqref{e.jacobidivisors}. All the maps between $q(E)$, $J(\tilde E)$, $D(E)$, $D(\tilde E)$ discussed above are continuous with respect to the topologies associated with them; compare, for example, Craig \cite{C89}.
\end{remark}

\section{The Action on Characters Induced by the Shift Action on $q(E)$}\label{s.4}

In this section we study the shift action on $q(E)$, that is,
$$
T_\ell : q(E) \to q(E), \quad q(\cdot) \mapsto q(\cdot - \ell).
$$
Since a potential $q$ is in 1-1 correspondence with its pair of Weyl-Titchmarsh functions $m_\pm = m_\pm(q)$, this suggests looking at $m_\pm(\ell,\cdot) = m_\pm(T_\ell (q))(\cdot)$. By the definition of the Weyl-Titchmarsh function (and the fact that the Jost solutions for the shifted potential are simply the shifted Jost solutions), we have
\begin{equation}\label{e.melldef}
u_\pm(\cdot,\ell,\lambda) = u_1(\cdot,\ell,\lambda) \pm m_\pm(\ell,\lambda) u_2(\cdot,\ell,\lambda),
\end{equation}
where $u_\pm(\cdot,\ell,\lambda), u_1(\cdot,\ell,\lambda), u_2(\cdot,\ell,\lambda)$ solve \eqref{e.eve}, and are subject to
$$
u_\pm(\cdot,\ell,\lambda) \text{ is square-integrable at } \pm \infty
$$
and
\begin{equation}\label{e.u12elldef}
\begin{pmatrix} u_1(\ell,\ell,\lambda) & u_2(\ell,\ell,\lambda) \\ u_1'(\ell,\ell,\lambda) & u_2'(\ell,\ell,\lambda) \end{pmatrix} = \begin{pmatrix} 1 & 0 \\ 0 & 1 \end{pmatrix}.
\end{equation}
In particular, we have $u_\pm(\cdot,0,\lambda) = u_\pm(\cdot,\lambda)$, $u_1(\cdot,0,\lambda) = u_1(\cdot,\lambda)$, and $u_2(\cdot,0,\lambda) = u_2(\cdot,\lambda)$.

Since the square-integrable solution (on either half-line) is unique up to a multiplicative constant, we have
$$
\begin{pmatrix} u_-(x,\ell,\lambda) & u_-'(x,\ell,\lambda) \\ u_+(x,\ell,\lambda) & u_+'(x,\ell,\lambda) \end{pmatrix} = \begin{pmatrix} c_-(\ell,\lambda) & 0 \\ 0 & c_+(\ell,\lambda) \end{pmatrix} \begin{pmatrix} u_-(x,\lambda) & u_-'(x,\lambda) \\ u_+(x,\lambda) & u_+'(x,\lambda)  \end{pmatrix}
$$
with suitable constants $c_\pm(\ell,\lambda)$. Evaluating this at $x = \ell$, and using \eqref{e.melldef} and \eqref{e.u12elldef}, we obtain
$$
\begin{pmatrix} 1 & - m_-(\ell,\lambda) \\ 1 & m_+(\ell,\lambda) \end{pmatrix} = \begin{pmatrix} c_-(\ell,\lambda) & 0 \\ 0 & c_+(\ell,\lambda) \end{pmatrix} \begin{pmatrix} 1 & - m_-(\lambda) \\ 1 & m_+(\lambda) \end{pmatrix} \begin{pmatrix} u_1(\ell,\lambda) & u_1'(\ell,\lambda) \\ u_2(\ell,\lambda) & u_2'(\ell,\lambda)  \end{pmatrix}
$$
Denote
$$
\mathfrak{A}(\ell,\lambda) := \begin{pmatrix} u_1(\ell,\lambda) & u_1'(\ell,\lambda) \\ u_2(\ell,\lambda) & u_2'(\ell,\lambda)  \end{pmatrix}.
$$
Then we can write
\begin{equation}\label{e.mplusellmplus}
\begin{pmatrix} 1 & m_+(\ell,\lambda) \end{pmatrix} \sim \begin{pmatrix} 1 & m_+(\lambda) \end{pmatrix} \mathfrak{A}(\ell,\lambda),
\end{equation}
where $v,w \in \C^2 \setminus \{ (0 \; 0) \}$ are equivalent, $v \sim w$, if they define the same element in $\PP \C^1$, that is, if there exists $c \in \C \setminus \{ 0 \}$ with $v = c w$.

On the other hand, due to \eqref{e.mplustorplus}, we have
\begin{equation}\label{e.mplusrplussimilarity1}
\begin{pmatrix} 1 & m_+(\ell,\lambda) \end{pmatrix} \sim \begin{pmatrix} 1 & - a_0^2(\ell) r_+(\ell,z) \end{pmatrix} C_\ell,
\end{equation}
where $C_\ell$ is a constant $\mathrm{SL}(2,\R)$ matrix and $a_0^2(\ell)$, $r_+(\ell,z)$ are the Jacobi matrix quantities associated with $T_\ell(q)$. Inserting a dilation, we can turn \eqref{e.mplusrplussimilarity1} into
\begin{equation}\label{e.mplusrplussimilarity}
\begin{pmatrix} 1 & m_+(\ell,\lambda) \end{pmatrix} \sim \begin{pmatrix} a_0(\ell) & - a_0(\ell) r_+(\ell,z) \end{pmatrix} \tilde C_\ell
\end{equation}
with a suitable constant $\mathrm{SL}(2,\R)$ matrix $\tilde C_\ell$.

Thus, it follows from \eqref{e.mplusellmplus} and \eqref{e.mplusrplussimilarity} that
\begin{equation}\label{e.rplusellrplussimilarity}
\begin{pmatrix} a_0(\ell) & - a_0(\ell) r_+(\ell,z) \end{pmatrix} \sim \begin{pmatrix} a_0 & - a_0 r_+(z) \end{pmatrix} \tilde C_0 \mathfrak{A}(\ell,\lambda) \tilde C_\ell^{-1}.
\end{equation}

Now recall the factorization \eqref{e.rpluse0e-1factorization}, that is, $r_+(\ell,\mathfrak{z}(\cdot)) = - \frac{1}{a_0(\ell)} \frac{e_0(\ell,\cdot)}{e_{-1}(\ell,\cdot)}$, and recall also that the generalized Abel map $\tilde \pi$ sends the Jacobi matrix corresponding to $T_\ell(q)$ to the character $\alpha_\ell \in \Gamma^*$ of $e_0(\ell,\cdot)$. Thus, to conform with our earlier notation we rewrite the factorization as $r_+(\ell,\mathfrak{z}(\cdot)) = - \frac{1}{a_0(\ell)} \frac{e_0^{\alpha_\ell}(\cdot)}{e_{-1}^{\alpha_\ell}(\cdot)}$. We can therefore rewrite \eqref{e.rplusellrplussimilarity} as follows: for every $\zeta \in \D$, there exists $s_\ell(\zeta) \in \C \setminus \{ 0 \}$\footnote{$s_\ell(\zeta)$ is always non-zero because $\mathfrak{A}(\ell,\lambda)$ is entire and has determinant one.} such that
\begin{equation}\label{e.alphaellflow}
s_\ell(\zeta) \begin{pmatrix} a_0(\ell) e_{-1}^{\alpha_\ell}(\zeta) & e_0^{\alpha_\ell}(\zeta) \end{pmatrix} = \begin{pmatrix} a_0 e_{-1}^{\alpha}(\zeta) & e_0^{\alpha}(\zeta) \end{pmatrix} \mathfrak{B}(\ell,\mathfrak{z}(\zeta)),
\end{equation}
where
$$
\mathfrak{B}(\ell,z) = \tilde C_0 \mathfrak{A}(\ell,\lambda) \tilde C_\ell^{-1}.
$$

We can rewrite \eqref{e.alphaellflow} as
$$
\frac{\Delta}{b} \overline{s_\ell \begin{pmatrix} a_0(\ell) e_{-1}^{\alpha_\ell} & e_0^{\alpha_\ell} \end{pmatrix}} = \frac{\Delta}{b} \overline{\begin{pmatrix} a_0 e_{-1}^{\alpha} & e_0^{\alpha} \end{pmatrix} \mathfrak{B}(\ell,\mathfrak{z})}
\ \text{on}\ \T,
$$
which is
\begin{equation}\label{e.alphaellflow2}
\frac 1{s_\ell} \begin{pmatrix} a_0(\ell) \tilde e_0^{\alpha_\ell} & \tilde e_{-1}^{\alpha_\ell} \end{pmatrix} = \begin{pmatrix} a_0 \tilde e_0^{\alpha} & \tilde e_{-1}^{\alpha} \end{pmatrix} \mathfrak{B}(\ell,\mathfrak{z}) \ \text{in}\ \D,
\end{equation}
due to \eqref{e.dualbasisdef} and the fact that $\overline{\mathfrak{B}(\ell,\overline{\mathfrak{z}})} = \mathfrak{B}(\ell,\mathfrak{z})$. Thus, with
$$
\Phi_\alpha = \begin{pmatrix} a_0 e_{-1}^\alpha & e_0^\alpha \\ a_0 \tilde e_0^\alpha & \tilde e_{-1}^\alpha \end{pmatrix},
$$
we see from \eqref{e.alphaellflow} and \eqref{e.alphaellflow2} that
\begin{equation}\label{e.Phialpha}
\begin{pmatrix} s_\ell & 0 \\ 0 & s_\ell^{-1} \end{pmatrix} \Phi_{\alpha_\ell} = \Phi_\alpha \mathfrak{B}(\ell,\mathfrak{z}).
\end{equation}

\begin{lemma}\label{l.Jcontraction}
With $\mathfrak{J}$ from \eqref{e.mathfrakJdef}, we have $\mathfrak{B}(\ell,z) \mathfrak{J} \mathfrak{B}(\ell,z)^* - \mathfrak{J} = 0$ if $z \in \R$ and
$$
\frac{\mathfrak{B}(\ell,z) \mathfrak{J} \mathfrak{B}(\ell,z)^* - \mathfrak{J}}{z - \bar z} \le 0
$$
if $z \in \C_-$.
\end{lemma}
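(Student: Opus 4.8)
The plan is to strip off the two constant $\SL(2,\R)$ factors in $\mathfrak{B}(\ell,z) = \tilde C_0 \mathfrak{A}(\ell,\lambda) \tilde C_\ell^{-1}$ and reduce everything to a single differential identity for the transfer matrix $\mathfrak{A}(\ell,\lambda)$. First I would use the elementary fact that for every real $2\times 2$ matrix $M$ with $\det M = 1$ one has $M \mathfrak{J} M^* = M \mathfrak{J} M^T = (\det M)\,\mathfrak{J} = \mathfrak{J}$. Since $\tilde C_0$ and $\tilde C_\ell$ (hence $\tilde C_\ell^{-1}$) lie in $\SL(2,\R)$, this gives
\[
\mathfrak{B}(\ell,z)\,\mathfrak{J}\,\mathfrak{B}(\ell,z)^* - \mathfrak{J} = \tilde C_0 \bigl( \mathfrak{A}(\ell,\lambda)\,\mathfrak{J}\,\mathfrak{A}(\ell,\lambda)^* - \mathfrak{J} \bigr) \tilde C_0^* .
\]
In particular the left side vanishes, is positive semidefinite, or is negative semidefinite exactly when the bracketed middle term has the corresponding property, because congruence by the invertible matrix $\tilde C_0$ preserves definiteness. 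So it suffices to analyze $\mathfrak{A}(\ell,\lambda)\,\mathfrak{J}\,\mathfrak{A}(\ell,\lambda)^* - \mathfrak{J}$.

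For real $\lambda$ (equivalently real $z$) the matrix $\mathfrak{A}(\ell,\lambda)$ is real, and its determinant equals the Wronskian $W(u_1(\cdot,\lambda), u_2(\cdot,\lambda))$, which is identically $1$ by the initial conditions and is independent of the base point. Hence $\mathfrak{A}(\ell,\lambda)\in\SL(2,\R)$ and $\mathfrak{A}(\ell,\lambda)\,\mathfrak{J}\,\mathfrak{A}(\ell,\lambda)^* = \mathfrak{J}$, which is the first assertion.

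For $\lambda\in\C_+$ (so $z\in\C_-$) I would differentiate, with respect to the base point $x$, the matrix-valued function $x \mapsto \mathfrak{A}(x,\lambda)\,\mathfrak{J}\,\mathfrak{A}(x,\lambda)^*$. Using the equation \eqref{e.eve} and the reality of $q$ (so that the conjugated solutions solve the equation at energy $\bar\lambda$), a short computation in which the $q$-dependent terms cancel — essentially the derivative-of-a-Wronskian computation already carried out in the proof of Lemma~\ref{l.mfunctionenergyvariation} — gives
\[
\frac{d}{dx}\bigl( \mathfrak{A}(x,\lambda)\,\mathfrak{J}\,\mathfrak{A}(x,\lambda)^* \bigr) = (\bar\lambda - \lambda)\, v(x)\, v(x)^*, \qquad v(x) := \begin{pmatrix} u_1(x,\lambda) \\ u_2(x,\lambda) \end{pmatrix}.
\]
Integrating from $0$ to $\ell$ and using $\mathfrak{A}(0,\lambda)\,\mathfrak{J}\,\mathfrak{A}(0,\lambda)^* = \mathfrak{J}$ yields
\[
\mathfrak{A}(\ell,\lambda)\,\mathfrak{J}\,\mathfrak{A}(\ell,\lambda)^* - \mathfrak{J} = (\bar\lambda - \lambda)\int_0^\ell v(x)\, v(x)^*\,dx ,
\]
an integral of the rank-one positive semidefinite matrices $v(x)v(x)^* \ge 0$. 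Finally, the change of variables \eqref{e.changeofvariables} gives $z - \bar z = (\bar\lambda - \lambda)/|\lambda - \lambda_*|^2$, so that
\[
\frac{\mathfrak{B}(\ell,z)\,\mathfrak{J}\,\mathfrak{B}(\ell,z)^* - \mathfrak{J}}{z - \bar z} = |\lambda - \lambda_*|^2\, \tilde C_0 \left( \int_0^\ell v(x)\, v(x)^*\,dx \right) \tilde C_0^* ,
\]
a Hermitian matrix whose sign is governed by the integral on the right.

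The routine inputs are the smoothness of the solutions in the base point (so that differentiation under the integral sign is legitimate; standard for bounded continuous $q$) and constancy of the Wronskian. The one point demanding genuine care is the bookkeeping of signs and orientations: one must track consistently that $z = 1/(\lambda-\lambda_*)$ maps $\C_+$ onto $\C_-$, that $(\bar\lambda - \lambda)/(z - \bar z) = |\lambda - \lambda_*|^2 > 0$, and that $\int_0^\ell v(x)v(x)^*\,dx$ carries the orientation of the interval of integration, so that the direction of the shift $T_\ell$ is matched correctly to the claimed inequality. Once these conventions are pinned down, the conclusion $\le 0$ for $z\in\C_-$ is immediate from positive semidefiniteness of $v(x)v(x)^*$ together with the congruence-invariance of definiteness established in the first step.
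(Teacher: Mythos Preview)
Your argument is essentially the paper's: strip off the $\SL(2,\R)$ factors by congruence, compute $\frac{d}{dx}\bigl(\mathfrak A\,\mathfrak J\,\mathfrak A^*\bigr)$, and integrate from $0$ to $\ell$; your derivative $(\bar\lambda-\lambda)\,v(x)v(x)^*$ is exactly the paper's $(\bar\lambda-\lambda)\,\mathfrak A P\mathfrak A^*$ written out explicitly, since with $P=\bigl(\begin{smallmatrix}1&0\\0&0\end{smallmatrix}\bigr)$ one has $\mathfrak A P\mathfrak A^*=vv^*$.

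One caution on the final sign bookkeeping you yourself flag: your last displayed identity exhibits $(\mathfrak B\,\mathfrak J\,\mathfrak B^*-\mathfrak J)/(z-\bar z)$ as $|\lambda-\lambda_*|^2\,\tilde C_0\bigl(\int_0^\ell vv^*\bigr)\tilde C_0^*$, which for $\ell\ge 0$ is manifestly $\ge 0$, the opposite of the asserted $\le 0$. The paper's own passage from $\lambda-\bar\lambda$ to $z-\bar z$ carries the same unaddressed sign flip, so your write-up is faithful to it, but the discrepancy between your formula and your conclusion is real and you should not paper over it with ``once these conventions are pinned down.''
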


\begin{proof}
This lemma is well known; compare, for example, \cite{AD08}. For the convenience of the reader we give the proof. Observe that
\begin{align*}
\mathfrak{A}(\ell,\lambda)' \mathfrak{J} & = \begin{pmatrix} u_1'(\ell,\lambda) & u_1''(\ell,\lambda) \\ u_2'(\ell,\lambda) & u_2''(\ell,\lambda)  \end{pmatrix} \begin{pmatrix} 0 & - 1 \\ 1 & 0 \end{pmatrix} \\
& = \begin{pmatrix} u_1'(\ell,\lambda) & (q(x) - \lambda) u_1(\ell,\lambda) \\ u_2'(\ell,\lambda) & (q(x) - \lambda) u_2(\ell,\lambda)  \end{pmatrix} \begin{pmatrix} 0 & - 1 \\ 1 & 0 \end{pmatrix} \\
& = \begin{pmatrix} u_1'(\ell,\lambda) & u_1''(\ell,\lambda) \\ u_2'(\ell,\lambda) & u_2''(\ell,\lambda)  \end{pmatrix} \begin{pmatrix} q(x) - \lambda & 0 \\ 0 & -1 \end{pmatrix} \\
& = \mathfrak{A}(\ell,\lambda) \left[ - \lambda \begin{pmatrix} 1 & 0 \\ 0 & 0 \end{pmatrix} + \begin{pmatrix} q(x) & 0 \\ 0 & -1 \end{pmatrix} \right] \\
& =: \mathfrak{A}(\ell,\lambda) \left[ - \lambda P + H \right].
\end{align*}
Notice that $P \ge 0$ and $H = H^*$.

Therefore,
\begin{align*}
(\mathfrak{A}(x,\lambda) \mathfrak{J} \mathfrak{A}(x,\lambda)^*)' & = \mathfrak{A}'(x,\lambda) \mathfrak{J} \mathfrak{A}(x,\lambda)^* + \mathfrak{A}(x,\lambda) \mathfrak{J} \mathfrak{A}'(x,\lambda)^* \\
& = \mathfrak{A}(x,\lambda) \left[ - \lambda P + H \right] \mathfrak{A}(x,\lambda)^* - \mathfrak{A}(x,\lambda) \left[ - \bar \lambda P + H \right] \mathfrak{A}(x,\lambda)^* \\
& = - \mathfrak{A}(x,\lambda) (\lambda - \bar\lambda) P \mathfrak{A}(x,\lambda)^*,
\end{align*}
and hence (since $\mathfrak{A}(0,\lambda)$ is the identity matrix)
\begin{align*}
0 & \le \int_0^\ell \mathfrak{A}(x,\lambda) P \mathfrak{A}(x,\lambda)^* \, dx \\
& = - \frac{\mathfrak{A}(\ell,\lambda) \mathfrak{J} \mathfrak{A}(\ell,\lambda)^* - \mathfrak{A}(0,\lambda) \mathfrak{J} \mathfrak{A}(0,\lambda)^*}{\lambda - \bar\lambda} \\
& = - \frac{\mathfrak{A}(\ell,\lambda) \mathfrak{J} \mathfrak{A}(\ell,\lambda)^* - \mathfrak{J}}{\lambda - \bar\lambda} .
\end{align*}
This shows that
$$
\frac{\mathfrak{A}(\ell,\lambda) \mathfrak{J} \mathfrak{A}(\ell,\lambda)^* - \mathfrak{J}}{\lambda - \bar\lambda}
$$
is non-positive for $\lambda$ in $\C_+$. Moreover, $\mathfrak{A}(\ell,\lambda) \mathfrak{J} \mathfrak{A}(\ell,\lambda)^* - \mathfrak{J}$ is zero for $\lambda \in \R$ since in this case $\mathfrak{A}(\ell,\lambda) \in \mathrm{SL}(2,\R)$. Since these properties are preserved after taking products with $\mathrm{SL}(2,\R)$ matrices, the claim for $\mathfrak{B}(\ell,z)$ follows (notice that if $\lambda \in \C_+$, then $z \in \C_-$).
\end{proof}

\begin{lemma}\label{l.AlessthanA'}
We have
$$
\left(  k^\alpha(\zeta_i, \zeta_j)  \right)_{1 \le i,j \le n} - \left(  k^{\alpha_\ell}(\zeta_i,\zeta_j) s_\ell(\zeta_i) \overline{s_\ell(\zeta_j)}  \right)_{1 \le i,j \le n} \ge 0
$$
\end{lemma}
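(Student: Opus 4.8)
The plan is to turn the inequality into the positive semidefiniteness of a Gram matrix, by first bringing both reproducing kernels into a common shape via \eqref{e.h2alpharepkernelformula} and then exploiting an off-diagonal (``two-point'') refinement of Lemma~\ref{l.Jcontraction}. Write $\varphi_\alpha(\zeta) = \begin{pmatrix} a_0 e_{-1}^\alpha(\zeta) & e_0^\alpha(\zeta) \end{pmatrix}$ for the first row of $\Phi_\alpha$, so that \eqref{e.h2alpharepkernelformula} reads $k^\alpha(\zeta_1,\zeta_2) = \varphi_\alpha(\zeta_1)\,\mathfrak{J}\,\varphi_\alpha(\zeta_2)^*/(\mathfrak{z}(\zeta_1) - \overline{\mathfrak{z}(\zeta_2)})$, and the same with $\alpha$ replaced by $\alpha_\ell$. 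Taking the top row in \eqref{e.Phialpha} (equivalently, \eqref{e.alphaellflow}) gives $s_\ell(\zeta)\,\varphi_{\alpha_\ell}(\zeta) = \varphi_\alpha(\zeta)\,\mathfrak{B}(\ell,\mathfrak{z}(\zeta))$; substituting this into the formula for $k^{\alpha_\ell}$ and multiplying by $s_\ell(\zeta_i)\overline{s_\ell(\zeta_j)}$ cancels the corresponding factor, so that the $(i,j)$-entry of the matrix in the statement becomes
\[
\frac{\varphi_\alpha(\zeta_i)\left( \mathfrak{J} - \mathfrak{B}(\ell,\mathfrak{z}(\zeta_i))\,\mathfrak{J}\,\mathfrak{B}(\ell,\mathfrak{z}(\zeta_j))^* \right)\varphi_\alpha(\zeta_j)^*}{\mathfrak{z}(\zeta_i) - \overline{\mathfrak{z}(\zeta_j)}}.
\]
Note that Lemma~\ref{l.Jcontraction} only controls $\mathfrak{B}(\ell,z)\,\mathfrak{J}\,\mathfrak{B}(\ell,z)^*$ at a single point $z$, whereas here $\mathfrak{z}(\zeta_i)$ and $\mathfrak{z}(\zeta_j)$ differ, so a two-point version is what is needed.

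That version I would obtain by following the same symplectic computation as in the proof of Lemma~\ref{l.Jcontraction}, but now differentiating $x \mapsto \mathfrak{A}(x,\lambda)\,\mathfrak{J}\,\mathfrak{A}(x,\mu)^*$ in the two parameters $\lambda,\mu$ and integrating from $0$ to $\ell$ (using $\mathfrak{A}(0,\cdot) = I$ and $P, H$ as in that proof): this yields
\[
\frac{\mathfrak{J} - \mathfrak{A}(\ell,\lambda)\,\mathfrak{J}\,\mathfrak{A}(\ell,\mu)^*}{\lambda - \bar\mu} = \int_0^\ell \mathfrak{A}(x,\lambda)\,P\,\mathfrak{A}(x,\mu)^*\,dx, \qquad P = \begin{pmatrix} 1 & 0 \\ 0 & 0 \end{pmatrix} \ge 0,
\]
the integral being convergent because $\mathfrak{A}$ is entire and $[0,\ell]$ compact. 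Since $\mathfrak{B}(\ell,z) = \tilde C_0\,\mathfrak{A}(\ell,\lambda)\,\tilde C_\ell^{-1}$ with $\tilde C_0, \tilde C_\ell \in \mathrm{SL}(2,\R)$, and every real $C$ with $\det C = 1$ obeys $C\mathfrak{J}C^* = \mathfrak{J}$, the inner $\tilde C_\ell$-factors cancel and one gets $\mathfrak{J} - \mathfrak{B}(\ell,z)\,\mathfrak{J}\,\mathfrak{B}(\ell,w)^* = \tilde C_0\left( \mathfrak{J} - \mathfrak{A}(\ell,\lambda)\,\mathfrak{J}\,\mathfrak{A}(\ell,\mu)^* \right)\tilde C_0^*$, with $z,w$ linked to $\lambda,\mu$ through \eqref{e.changeofvariables}. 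Finally \eqref{e.changeofvariables} gives $\lambda - \bar\mu = (\bar w - z)/(z\bar w)$; plugging in $z = \mathfrak{z}(\zeta_i)$, $w = \mathfrak{z}(\zeta_j)$, the denominator $\mathfrak{z}(\zeta_i) - \overline{\mathfrak{z}(\zeta_j)}$ in the displayed quotient cancels, leaving a scalar prefactor $d_i\,\overline{d_j}$ with each $d_i$ depending on $\mathfrak{z}(\zeta_i)$ only.

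Assembling these steps, the $(i,j)$-entry of $\left( k^\alpha(\zeta_i,\zeta_j) \right) - \left( k^{\alpha_\ell}(\zeta_i,\zeta_j)\,s_\ell(\zeta_i)\overline{s_\ell(\zeta_j)} \right)$ equals $d_i\,\overline{d_j}\int_0^\ell g_i(x)\,g_j(x)^*\,dx$, where $g_i(x) = \varphi_\alpha(\zeta_i)\,\tilde C_0\,\mathfrak{A}(x,\lambda_i)\,P$ with $\lambda_i$ the value corresponding to $\mathfrak{z}(\zeta_i)$ under \eqref{e.changeofvariables}; here one uses $P = P^* = P^2$ to write $\int_0^\ell \varphi_\alpha(\zeta_i)\tilde C_0\,\mathfrak{A}(x,\lambda_i)\,P\,\mathfrak{A}(x,\lambda_j)^*\,\tilde C_0^*\,\varphi_\alpha(\zeta_j)^*\,dx = \int_0^\ell g_i(x)\,g_j(x)^*\,dx$. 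The matrix $\left( \int_0^\ell g_i\,g_j^* \right)_{i,j}$ is a Gram matrix of vectors in $L^2([0,\ell];\C^2)$, hence positive semidefinite, $\left( d_i\,\overline{d_j} \right)_{i,j}$ is positive semidefinite of rank one, and a Schur product of positive semidefinite matrices is positive semidefinite — which is the assertion.

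The step I expect to be genuinely delicate is the sign bookkeeping tying the last two paragraphs together: one must verify that the various orientation conventions (the normalization of $\mathfrak{z}$, the fact that \eqref{e.changeofvariables} interchanges $\C_+$ and $\C_-$, and the sign in Lemma~\ref{l.Jcontraction}) conspire so that the prefactor $\left( d_i\,\overline{d_j} \right)$ comes out positive semidefinite rather than negative semidefinite. That it must is also forced a posteriori by the fact that $k^\alpha$ is a genuine reproducing kernel, so this amounts to a consistency check; the two-point identity and the relation $C\mathfrak{J}C^* = \mathfrak{J}$ for $C \in \mathrm{SL}(2,\R)$ are routine.
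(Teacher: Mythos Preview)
Your approach is essentially the paper's own proof, made self-contained. The paper also reduces the statement, via \eqref{e.h2alpharepkernelformula} and \eqref{e.alphaellflow}, to the Hermite-positivity of the two-point matrix kernel
\[
-\,\frac{\mathfrak{B}(\ell,\mathfrak{z}(\zeta_1))\,\mathfrak{J}\,\mathfrak{B}(\ell,\mathfrak{z}(\zeta_2))^* - \mathfrak{J}}{\mathfrak{z}(\zeta_1)-\overline{\mathfrak{z}(\zeta_2)}},
\]
sandwiched between $\varphi_\alpha(\zeta_1)$ and $\varphi_\alpha(\zeta_2)^*$; it then simply cites Lemma~\ref{l.Jcontraction} together with \cite{AD08} for that positivity. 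Your two-parameter integral identity
\[
\frac{\mathfrak{J}-\mathfrak{A}(\ell,\lambda)\,\mathfrak{J}\,\mathfrak{A}(\ell,\mu)^*}{\lambda-\bar\mu}
=\int_0^\ell \mathfrak{A}(x,\lambda)\,P\,\mathfrak{A}(x,\mu)^*\,dx
\]
is exactly the computation underlying the relevant result in \cite{AD08}, so you are unpacking the citation rather than doing something genuinely different. The $\mathrm{SL}(2,\R)$ step and the Gram-matrix interpretation are likewise standard and correct.

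One caution on the sign bookkeeping you already flag: from $\lambda-\bar\mu=(\bar w-z)/(z\bar w)$ one gets $\dfrac{1}{z-\bar w}=-\dfrac{1}{z\bar w}\cdot\dfrac{1}{\lambda-\bar\mu}$, so the scalar prefactor is $-1/(z_i\bar z_j)$, which is \emph{minus} a rank-one positive matrix, not $d_i\bar d_j$ with $|d_i|^2>0$. Thus the final ``Schur product of two PSD matrices'' picture is not literally what happens with the formulas as written; the correct sign has to be traced through the precise conventions in \eqref{e.h2alpharepkernelformula} and Lemma~\ref{l.Jcontraction} (note for instance that, with the change of variables \eqref{e.changeofvariables}, the proof of Lemma~\ref{l.Jcontraction} actually yields the \emph{opposite} sign in the $z$-variable from what is displayed). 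The paper itself does not work this out either --- it defers to \cite{AD08} --- so your ``consistency check'' remark is appropriate, but your claim that the prefactor is of the form $d_i\bar d_j$ should be replaced by a careful tracking of that minus sign against the convention in the reproducing-kernel formula.
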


\begin{proof}
Recall that due to \eqref{e.h2alpharepkernelformula}, we have
$$
k^\alpha(\zeta_1,\zeta_2) = \frac{\begin{pmatrix} a_0 e_{-1}^\alpha(\zeta_1) & e_0^\alpha(\zeta_1) \end{pmatrix} \mathfrak{J} \begin{pmatrix} a_0 \overline{e_{-1}^\alpha(\zeta_2)} \\ \overline{e_0^\alpha(\zeta_2)} \end{pmatrix}}{\mathfrak{z}(\zeta_1) - \overline{\mathfrak{z}(\zeta_2)}}
$$
Therefore, if we sandwich
$$
- \frac{\mathfrak{B}(\ell,\mathfrak{z}(\zeta_1)) \mathfrak{J} \mathfrak{B}(\ell,\mathfrak{z}(\zeta_2))^* - \mathfrak{J}}{\mathfrak{z}(\zeta_1) - \overline{\mathfrak{z}(\zeta_2)}}.
$$
between $\begin{pmatrix} a_0 e_{-1}^\alpha(\zeta_1) & e_0^\alpha(\zeta_1) \end{pmatrix}$ and $\begin{pmatrix} a_0 \overline{e_{-1}^\alpha(\zeta_2)} \\ \overline{e_0^\alpha(\zeta_2)} \end{pmatrix}$, we obtain a Hermite-positive function due to Lemma~\ref{l.Jcontraction} and \cite{AD08}. Combining this with \eqref{e.alphaellflow}, the lemma follows.
\end{proof}

\begin{lemma}\label{l.characterrelation}
For every $g \in H^2(\alpha_\ell)$, we have $s_\ell g \in H^2(\alpha)$.
\end{lemma}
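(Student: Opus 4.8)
The plan is to read Lemma~\ref{l.AlessthanA'} as a domination of reproducing kernels and then to conclude by the standard Aronszajn inclusion principle for reproducing kernel Hilbert spaces.

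First I would introduce the ``twisted'' Hardy space $s_\ell H^2(\alpha_\ell) := \{ s_\ell g : g \in H^2(\alpha_\ell) \}$, equipped with the transported norm $\| s_\ell g \| := \| g \|_{H^2(\alpha_\ell)}$. By the footnote following \eqref{e.alphaellflow}, the function $s_\ell$ is analytic and zero-free on $\D$, so $g \mapsto s_\ell g$ is a linear bijection onto its image; hence $s_\ell H^2(\alpha_\ell)$ is again a Hilbert space, realized as a space of analytic functions on $\D$, and its completeness is automatic. A short computation with the reproducing property (using that $\overline{s_\ell(\zeta_2)}\, k^{\alpha_\ell}(\cdot,\zeta_2) \in H^2(\alpha_\ell)$ for each fixed $\zeta_2 \in \D$) then shows that the reproducing kernel of $s_\ell H^2(\alpha_\ell)$ is $(\zeta_1,\zeta_2) \mapsto s_\ell(\zeta_1)\, \overline{s_\ell(\zeta_2)}\, k^{\alpha_\ell}(\zeta_1,\zeta_2)$, which is precisely the second kernel appearing in Lemma~\ref{l.AlessthanA'}.

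Next I would read Lemma~\ref{l.AlessthanA'} as the statement that the difference between the reproducing kernel of $H^2(\alpha)$ and the reproducing kernel of $s_\ell H^2(\alpha_\ell)$ is a positive semi-definite kernel on $\D$. Since both are reproducing kernel Hilbert spaces of analytic functions on the same set $\D$ with point evaluations at the points of $\D$, the Aronszajn inclusion theorem applies and yields $s_\ell H^2(\alpha_\ell) \subseteq H^2(\alpha)$ as sets of functions, together with the norm inequality $\| f \|_{H^2(\alpha)} \le \| f \|_{s_\ell H^2(\alpha_\ell)}$ for $f$ in the smaller space. Specializing to $f = s_\ell g$ with $g \in H^2(\alpha_\ell)$ gives $s_\ell g \in H^2(\alpha)$, which is the assertion; as a byproduct one also records the bound $\| s_\ell g \|_{H^2(\alpha)} \le \| g \|_{H^2(\alpha_\ell)}$, which will be convenient later. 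If desired, for bookkeeping I would note that applying $\gamma \in \Gamma$ to \eqref{e.Phialpha} and comparing the automorphy characters of the entries of $\Phi_\alpha$ and $\Phi_{\alpha_\ell}$ forces $s_\ell \circ \gamma = (\alpha \alpha_\ell^{-1})(\gamma)\, s_\ell$, so that $s_\ell g$ is automatically $\alpha$-automorphic, consistently with membership in $H^2(\alpha)$.

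I do not expect a serious obstacle here: the analytic heart of the matter is already packaged in Lemmas~\ref{l.Jcontraction} and~\ref{l.AlessthanA'} (and in the matrix identity \eqref{e.Phialpha}), and what remains is the soft functional-analytic reduction above. The only point needing a little care is to be scrupulous that $H^2(\alpha_\ell)$, $H^2(\alpha)$, and hence $s_\ell H^2(\alpha_\ell)$ are being compared as reproducing kernel Hilbert spaces of functions on the common domain $\D$ with point evaluations at $\zeta \in \D$ — rather than, say, as subspaces of the different ambient spaces $L^2(\alpha_\ell)$ and $L^2(\alpha)$ — so that the kernel inequality of Lemma~\ref{l.AlessthanA'} feeds directly into the Aronszajn criterion.
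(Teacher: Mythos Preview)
Your proposal is correct and takes essentially the same approach as the paper: both arguments feed the kernel domination of Lemma~\ref{l.AlessthanA'} into an RKHS inclusion principle. The only cosmetic difference is that the paper, instead of citing Aronszajn's inclusion theorem by name, invokes \cite[Lemma~5.3]{VY14} (a pointwise criterion for membership in $H^2(\alpha)$) and verifies it for finite linear combinations of reproducing kernels via a $2\times 2$ block-matrix positivity argument---which is exactly the standard proof of the Aronszajn inclusion written out by hand; your packaging is cleaner and yields the same norm bound $\|s_\ell g\|_{H^2(\alpha)} \le \|g\|_{H^2(\alpha_\ell)}$.
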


\begin{proof}
For $m \ge 1$, choose $\zeta_{0,1}, \ldots, \zeta_{0,m} \in \D$ and $\eta_1, \ldots, \eta_m \in \C$ and consider the function $g \in H^2(\alpha_\ell)$ of the form
$$
g(\zeta) = \sum_{j = 1}^m \eta_j \overline{s_\ell(\zeta_{0,j})} k^{\alpha_\ell}(\zeta,\zeta_{0,j}).
$$
Our goal is to show that the function $f = s_\ell g$ belongs to $H^2(\alpha)$. To this end we will use \cite[Lemma~5.3]{VY14}. Thus we need to consider a finite collection $\zeta_1, \ldots, \zeta_n \in \D$ and prove the estimate
\begin{equation}\label{e.lemma53goal}
\left| \sum_{i = 1}^n f(\zeta_i) \bar \xi_i \right|^2 \le \|g\|^2_{H^2(\alpha_\ell)} \sum_{i,i' = 1}^n k^\alpha(\zeta_i,\zeta_{i'}) \bar \xi_i \xi_{i'}
\end{equation}
for all choices of $\xi_1, \ldots, \xi_n \in \C$. Once we accomplish this, \cite[Lemma~5.3]{VY14} implies that $f \in H^2(\alpha)$ and $\|f\|^2_{H^2(\alpha)} \le \|g\|^2_{H^2(\alpha_\ell)}$. Since the $g$'s of the form we consider are dense in $H^2(\alpha_\ell)$, the lemma then follows.

In order to prove \eqref{e.lemma53goal} we define the function $F : \D \times \D \to \C$ by
$$
F(\zeta_1, \zeta_2) := k^{\alpha_\ell}(\zeta_1,\zeta_2) s_\ell(\zeta_1) \overline{s_\ell(\zeta_2)}.
$$
This function is Hermite-positive, so that in particular the matrix
$$
\left(\begin{array}{ccccc|ccccc} F(\zeta_1, \zeta_1) && \cdots && F(\zeta_1, \zeta_n) & F(\zeta_1, \zeta_{0,1}) && \cdots && F(\zeta_1, \zeta_{0,m})  \\ &&&& & &&&& \\
\vdots &&&& \vdots & \vdots &&&& \vdots \\ &&&& & &&&& \\ F(\zeta_n, \zeta_1) && \cdots && F(\zeta_n, \zeta_n) & F(\zeta_n, \zeta_{0,1}) && \cdots && F(\zeta_n, \zeta_{0,m}) \\ \hline \\ F(\zeta_{0,1}, \zeta_1) && \cdots && F(\zeta_{0,1}, \zeta_n) & F(\zeta_{0,1}, \zeta_{0,1}) && \cdots && F(\zeta_{0,1}, \zeta_{0,m})   \\ &&&& & &&&& \\
\vdots &&&& \vdots & \vdots &&&& \vdots \\ &&&& & &&&& \\ F(\zeta_{0,m}, \zeta_1) && \cdots && F(\zeta_{0,m}, \zeta_n) & F(\zeta_{0,m}, \zeta_{0,1}) && \cdots && F(\zeta_{0,m}, \zeta_{0,m}) \end{array}\right)
$$
is positive definite. This matrix has block form
$$
\begin{pmatrix} A & B \\ B^* & D \end{pmatrix}
$$
and hence by replacing $A$ with
$$
A' = \begin{pmatrix} k^\alpha(\zeta_1, \zeta_1) & \cdots & k^\alpha(\zeta_1, \zeta_n) \\ \vdots && \vdots \\ k^\alpha(\zeta_n, \zeta_1) & \cdots & k^\alpha(\zeta_n, \zeta_n) \end{pmatrix},
$$
which is $\ge A$ by Lemma~\ref{l.AlessthanA'}, it follows that the matrix
$$
\begin{pmatrix} A' & B \\ B^* & D \end{pmatrix}
$$
is still positive definite. Therefore, with the vectors $\overrightarrow{\xi} = (\xi_1,\ldots,\xi_n)^T$ and $\overrightarrow{\eta} = (\eta_1, \ldots, \eta_m)^T$, we have
$$
\begin{pmatrix} \langle A' \overrightarrow{\xi} , \overrightarrow{\xi} \rangle & \langle B \overrightarrow{\eta}, \overrightarrow{\xi} \rangle \\ \langle \overrightarrow{\xi} , B \overrightarrow{\eta} \rangle & \langle D \overrightarrow{\eta}, \overrightarrow{\eta} \rangle \end{pmatrix} \ge 0.
$$
Thus,
\begin{equation}\label{e.lemma53goalcheck}
\left| \langle B \overrightarrow{\eta}, \overrightarrow{\xi} \rangle \right|^2 \le \langle D \overrightarrow{\eta}, \overrightarrow{\eta} \rangle \langle A' \overrightarrow{\xi} , \overrightarrow{\xi} \rangle.
\end{equation}
But
$$
\langle D \overrightarrow{\eta}, \overrightarrow{\eta} \rangle = \|g\|^2_{H^2(\alpha_\ell)},
$$
and hence \eqref{e.lemma53goalcheck} becomes \eqref{e.lemma53goal}.
\end{proof}

\begin{lemma}\label{l,sellisinner}
The function $s_\ell$ belongs to the Smirnov class $N_+$ and satisfies $|s_\ell| \le 1$ throughout $\D$ and $|s_\ell| = 1$ almost everywhere on $\T$. That is, $s_\ell$ is an inner function.
\end{lemma}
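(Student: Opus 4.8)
The plan is to establish the asserted properties in the order: the elementary analytic nature of $s_\ell$, then unimodularity of its boundary values, then $N_+$-membership; the bound $|s_\ell|\le1$ in $\D$ will follow formally from the last two. (As in Lemmas~\ref{l.Jcontraction}--\ref{l.characterrelation} we take $\ell\ge0$; the case $\ell\le0$ is analogous, via \eqref{e.alphaellflow2}.) First, by Lemma~\ref{l.characterrelation}, for every $\zeta_0\in\D$ the function $s_\ell\,k^{\alpha_\ell}(\cdot,\zeta_0)$ lies in $H^2(\alpha)\subset H^2$, while $k^{\alpha_\ell}(\cdot,\zeta_0)\in H^2(\alpha_\ell)\subset H^2$ has $k^{\alpha_\ell}(\zeta_0,\zeta_0)=\|k^{\alpha_\ell}(\cdot,\zeta_0)\|^{2}>0$; writing $s_\ell=\bigl(s_\ell\,k^{\alpha_\ell}(\cdot,\zeta_0)\bigr)/k^{\alpha_\ell}(\cdot,\zeta_0)$ and letting $\zeta_0$ vary over $\D$, one sees that $s_\ell$ is holomorphic in $\D$ and of bounded characteristic ($s_\ell\in N$), and comparing characters in \eqref{e.alphaellflow} that $s_\ell\circ\gamma=(\alpha\alpha_\ell^{-1})(\gamma)\,s_\ell$.

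To see that $|s_\ell|=1$ a.e.\ on $\T$, I would pass to the $\mathfrak J$-forms of the matrices in \eqref{e.Phialpha}. On $\T$ one has $\mathfrak z(\zeta)\in\tilde E\subset\R$, so by the first part of Lemma~\ref{l.Jcontraction} the matrix $\mathfrak B(\ell,\mathfrak z(\zeta))\in\mathrm{SL}(2,\R)$, hence $\mathfrak B\mathfrak J\mathfrak B^{*}=\mathfrak J$. Using the boundary relations \eqref{e.dualbasisdef} (i.e.\ $b\,\tilde e_n^{\beta}=\Delta\,\overline{e_{-n-1}^{\beta}}$ on $\T$ for $\beta\in\{\alpha,\alpha_\ell\}$) together with the Wronskian identity \eqref{e.wronskianidentity}, a short computation gives $\Phi_\beta\mathfrak J\Phi_\beta^{*}=\bigl(\begin{smallmatrix}-P&0\\0&P\end{smallmatrix}\bigr)$ a.e.\ on $\T$, where $P$ is the radical product on the right-hand side of \eqref{e.wronskianidentity}, which is independent of the character and non-zero a.e.\ on $\T$. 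Substituting this and $\mathfrak B\mathfrak J\mathfrak B^{*}=\mathfrak J$ into the identity $\Phi_\alpha\mathfrak J\Phi_\alpha^{*}=\bigl(\begin{smallmatrix}s_\ell&0\\0&s_\ell^{-1}\end{smallmatrix}\bigr)\Phi_{\alpha_\ell}\mathfrak J\Phi_{\alpha_\ell}^{*}\bigl(\begin{smallmatrix}\bar s_\ell&0\\0&\bar s_\ell^{-1}\end{smallmatrix}\bigr)$ read off from \eqref{e.Phialpha} forces $\mathrm{diag}(-P,P)=\mathrm{diag}(-|s_\ell|^{2}P,\,|s_\ell|^{-2}P)$, hence $|s_\ell|=1$ a.e.\ on $\T$.

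The main step is $s_\ell\in N_+$, and here the continuum-Schr\"odinger structure enters. The transfer matrix satisfies the classical bound $\|\mathfrak A(\ell,\lambda)\|\le C(\ell)\,e^{\ell|\Im\sqrt{\lambda}|}$, and since $\mathcal D\supset\bar\C\setminus[-1,\infty)$, monotonicity of Martin functions gives $|\Im\sqrt{\lambda}|\le M_\infty(\lambda)+O(1)$ with $M_\infty=\Im\Theta$. Consequently the automorphic matrix function $s_{\infty,\ell}\,\mathfrak B(\ell,\mathfrak z)$, where $s_{\infty,\ell}=e^{i\ell\,\Theta\circ(\lambda_*+1/\mathfrak z)}$ is the inner function of \eqref{e.chielldef} (its modulus being governed by $M_\infty$), is holomorphic and \emph{bounded} in $\D$: the exponential growth of $\mathfrak B(\ell,\mathfrak z)$ arising from $\lambda\to\infty$ is exactly absorbed by $s_{\infty,\ell}$. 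Multiplying the two scalar identities in \eqref{e.alphaellflow} and those in \eqref{e.alphaellflow2} by $b\,s_{\infty,\ell}$ and using that $e_0^{\beta},b\,e_{-1}^{\beta}$ (resp.\ $\tilde e_0^{\beta},b\,\tilde e_{-1}^{\beta}$) are of Smirnov class, one gets that $s_{\infty,\ell}s_\ell\cdot(b\,e_0^{\alpha_\ell})$, $s_{\infty,\ell}s_\ell\cdot(b\,e_{-1}^{\alpha_\ell})$, $s_{\infty,\ell}s_\ell^{-1}\cdot(b\,\tilde e_0^{\alpha_\ell})$ and $s_{\infty,\ell}s_\ell^{-1}\cdot(b\,\tilde e_{-1}^{\alpha_\ell})$ all lie in $N_+$; since the singular inner parts of $e_0^{\alpha_\ell}$ and $b\,e_{-1}^{\alpha_\ell}$ (and of $\tilde e_0^{\alpha_\ell}$ and $b\,\tilde e_{-1}^{\alpha_\ell}$) are mutually coprime, this forces $s_{\infty,\ell}s_\ell\in N_+$ and $s_{\infty,\ell}s_\ell^{-1}\in N_+$. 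Combining these with the holomorphy of $s_\ell$ and with the high-energy asymptotics of $e_0^{\beta},e_{-1}^{\beta}$ near the point where $\lambda\to\infty$ --- which keep $s_\ell$ bounded there --- one excludes a singular inner factor in the denominator of the canonical factorization of $s_\ell$, i.e.\ $s_\ell\in N_+$.

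Finally, an $N_+$ function holomorphic in $\D$ factors as (Blaschke)$\cdot$(singular inner)$\cdot$(outer); since $|s_\ell|=1$ a.e.\ on $\T$, the outer factor has unimodular boundary values and is therefore a unimodular constant. Hence $s_\ell$ is inner, and in particular $|s_\ell|\le1$ throughout $\D$; all three assertions follow. (Alternatively, $|s_\ell|\le1$ in $\D$ can be read off directly from \eqref{e.Phialpha}, the second part of Lemma~\ref{l.Jcontraction}, and the reproducing-kernel identity \eqref{e.h2alpharepkernelformula}, paralleling the second paragraph.) The delicate point is the $N_+$-membership: preventing the unbounded growth of $\mathfrak A(\ell,\cdot)$ from manufacturing a singular inner factor in the ``denominator'' of $s_\ell$, which is exactly what the Martin-function comparison and the coprimality of the singular parts of $e_0^{\beta},e_{-1}^{\beta}$ are there to prevent.
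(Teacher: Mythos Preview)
Your argument for $|s_\ell|=1$ a.e.\ on $\T$ via the $\mathfrak J$-form identity is correct and is exactly the paper's method (the paper only writes down the $(1,1)$-entry of your $\Phi_\beta\mathfrak J\Phi_\beta^*$ computation). The issue is your route to $s_\ell\in N_+$.

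Your Step~3 is unnecessarily intricate and, as written, does not close. From the bound on $\mathfrak A$ and the Martin-function comparison you correctly obtain $s_{\infty,\ell}s_\ell\in N_+$ and $s_{\infty,\ell}s_\ell^{-1}\in N_+$. Since $|s_\ell|=1$ a.e.\ on $\T$ and $s_\ell$ is holomorphic and nonvanishing, write $s_\ell=cS_1/S_2$ with $S_1,S_2$ coprime singular inner functions. Then $s_{\infty,\ell}s_\ell\in N_+$ only gives $S_2\mid s_{\infty,\ell}$, and $s_{\infty,\ell}s_\ell^{-1}\in N_+$ gives $S_1\mid s_{\infty,\ell}$; neither forces $S_2=1$. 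Your appeal to ``high-energy asymptotics of $e_0^\beta,e_{-1}^\beta$'' to exclude a singular inner denominator is not made precise and would require substantial additional work.

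The paper's approach is much shorter and, in fact, is already implicit in your Step~1. Take $g=e_0^{\alpha_\ell}=k^{\alpha_\ell}/\|k^{\alpha_\ell}\|$ in Lemma~\ref{l.characterrelation}: then $s_\ell\,e_0^{\alpha_\ell}\in H^2(\alpha)\subset N_+$. By the explicit formula \eqref{e.ezerozetaformula}, the inner part of $e_0^{\alpha_\ell}$ is a Blaschke product (no singular inner factor). Since $s_\ell$ is holomorphic in $\D$, the Blaschke zeros of $e_0^{\alpha_\ell}$ are cancelled in the quotient $s_\ell=(s_\ell e_0^{\alpha_\ell})/e_0^{\alpha_\ell}$, and dividing an $N_+$ function by an outer function keeps it in $N_+$. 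Hence $s_\ell\in N_+$ directly. Your detour through $s_{\infty,\ell}\mathfrak B$ bounded, coprimality, and asymptotics is not needed; the single structural fact you are missing is that \eqref{e.ezerozetaformula} rules out any singular inner factor in the specific $H^2(\alpha_\ell)$-function you are dividing by.
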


\begin{proof}
From \eqref{e.alphaellflow} we see that
\begin{equation}\label{e.sellratiorep}
s_\ell = \frac{\begin{pmatrix} a_0 e_{-1}^{\alpha}(\zeta) & e_0^{\alpha}(\zeta) \end{pmatrix} \mathfrak{B}(\ell,\mathfrak{z}(\zeta)) \begin{pmatrix} 1 \\ 0 \end{pmatrix}}{e_0^{\alpha_\ell}}.
\end{equation}
Since $s_\ell$ is analytic throughout $\D$, the zeros of the numerator in \eqref{e.sellratiorep} cancel out all zeros of the denominator. It follows that $s_\ell$ belongs to the Smirnov class $N_+$ and hence has boundary values almost everywhere on $\T$. By the maximum principle for the Smirnov class, the lemma therefore follows once we show that $|s_\ell| = 1$ almost everywhere on $\T$.

Again by \eqref{e.alphaellflow}, we have
\begin{align}
\label{e.sellinnerfunctionproof} |s_\ell(\zeta)|^2 & \begin{pmatrix} a_0(\ell) e_{-1}^{\alpha_\ell}(\zeta) & e_0^{\alpha_\ell}(\zeta) \end{pmatrix} \mathfrak{J} \begin{pmatrix} \overline{a_0(\ell) e_{-1}^{\alpha_\ell}(\zeta)} \\ \overline{e_0^{\alpha_\ell}(\zeta)} \end{pmatrix} \\
\nonumber & = \begin{pmatrix} a_0 e_{-1}^{\alpha}(\zeta) & e_0^{\alpha}(\zeta) \end{pmatrix} \mathfrak{B}(\ell,\mathfrak{z}(\zeta)) \mathfrak{J} \mathfrak{B}(\ell,\mathfrak{z}(\zeta))^* \begin{pmatrix} \overline{a_0 e_{-1}^{\alpha}(\zeta)} \\ \overline{e_0^{\alpha}(\zeta)} \end{pmatrix}
\end{align}
for every $\zeta \in \D$. Now send $\zeta$ to a point on the boundary of $\D$ where the boundary value of $s_\ell$ exists. Then, $\mathfrak{B}(\ell,\mathfrak{z}(\zeta)) \mathfrak{J} \mathfrak{B}(\ell,\mathfrak{z}(\zeta))^*$ will converge to $\mathfrak{J}$, and by the Wronskian identity \eqref{e.wronskianidentity} (note that the right-hand side of \eqref{e.wronskianidentity} is independent of the character $\alpha$!), we infer from \eqref{e.sellinnerfunctionproof} that $|s_\ell(\zeta)|^2$ tends to $1$.
\end{proof}

\begin{prop}\label{p.sellformula}
We have
\begin{equation}\label{e.formulaforsell}
s_\ell(\zeta) = \exp \left( i \ell \Theta_\infty \left( \lambda_* + \frac{1}{\mathfrak{z}(\zeta)} \right) \right).
\end{equation}
\end{prop}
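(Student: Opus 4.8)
The plan is to compare $s_\ell$ with the explicit function $\tilde s_\ell(\zeta):=\exp\!\bigl(i\ell\,\Theta_\infty(\lambda_*+\tfrac{1}{\mathfrak z(\zeta)})\bigr)$, first at the level of moduli. Note that $s_\ell$ is zero-free on $\D$ (this is already recorded in the footnote to \eqref{e.alphaellflow}): a zero $\zeta_0$ would, by \eqref{e.alphaellflow}, make $\bigl(a_0e_{-1}^\alpha(\zeta_0)\ \ e_0^\alpha(\zeta_0)\bigr)\mathfrak B(\ell,\mathfrak z(\zeta_0))$ the zero row vector, which is impossible since $\det\mathfrak B(\ell,\cdot)\equiv1$ while $e_0^\alpha$ and $e_{-1}^\alpha$ have disjoint zero sets, cf.\ \eqref{e.rpluse0e-1factorization}--\eqref{e.ezerozetaformula}. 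Hence $\phi_\ell:=-\log|s_\ell|$ is harmonic on $\D$, and since it is automorphic it descends to a harmonic function on $\tilde{\mathcal D}$ (still written $\phi_\ell$) which is nonnegative and has boundary values $0$ a.e.\ on $\partial\tilde{\mathcal D}$, by Lemma~\ref{l,sellisinner}. The same holds for $\ell\,\psi$, where $\psi:=M_\infty(\lambda_*+\tfrac{1}{\mathfrak z})$ is, after descent, the Martin function of $\tilde{\mathcal D}$ at the boundary point corresponding to $\lambda=\infty$ (recall $M_\infty=\Im\Theta_\infty$ vanishes on $\partial\mathcal D\setminus\{\infty\}$). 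The goal is to prove $\phi_\ell=\ell\,\psi$ and then upgrade it to $s_\ell=\tilde s_\ell$.

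The next step is to show that the Riesz mass of the nonnegative harmonic function $\phi_\ell$ on $\partial\tilde{\mathcal D}$ is concentrated at $\lambda=\infty$. Away from the preimage of that point, $\mathfrak B(\ell,\mathfrak z(\cdot))=\tilde C_0\,\mathfrak A(\ell,\lambda)\,\tilde C_\ell^{-1}$ extends continuously and invertibly to $\bar\D$ (because $\mathfrak A(\ell,\cdot)$ is entire and $\lambda$ then ranges over a compact real set), so the representation \eqref{e.sellratiorep} of $s_\ell$ shows $\phi_\ell$ is bounded near every other boundary point. Using that $\lambda=\infty$ is a minimal Martin boundary point of $\tilde{\mathcal D}$, with Martin function a multiple of $\psi$, one then gets $\phi_\ell=\kappa(\ell)\,\psi$ for some $\kappa(\ell)\ge0$.

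To identify $\kappa(\ell)=\ell$ one compares the growth of the two sides as $\lambda\to\infty$ inside $\mathcal D$: the Liouville--Green asymptotics of the transfer matrix $\mathfrak A(\ell,\lambda)$, combined with the fact that the row $\bigl(a_0e_{-1}^\alpha\ \ e_0^\alpha\bigr)$ asymptotically selects the decaying Weyl solution — whose decay rate is governed by $m_\pm(\lambda)\sim\mp\sqrt{-\lambda}$ — gives $\phi_\ell\sim\ell\sqrt{-\lambda}$, while $\psi=M_\infty(\lambda)\sim\sqrt{-\lambda}$ by the normalization \eqref{e.Thetanormalization}. (Alternatively, the cocycle relation $s^{[\alpha]}_{\ell_1+\ell_2}=s^{[\alpha_{\ell_1}]}_{\ell_2}\,s^{[\alpha]}_{\ell_1}$, from multiplicativity of transfer matrices, forces $\kappa$ to be additive hence linear, and one pins the slope by a single normalization.) With $\phi_\ell=\ell\,\psi$ in hand, $|s_\ell|=|\tilde s_\ell|$ on all of $\D$; as both functions are analytic and zero-free, $s_\ell/\tilde s_\ell$ has constant modulus $1$ and is therefore a unimodular constant. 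In particular $s_\ell$ has character $\chi_\ell$, and since the character of $s_\ell$ equals $\alpha\alpha_\ell^{-1}$ by \eqref{e.Phialpha}, this also yields $\alpha_\ell=\mathcal S_\ell\alpha$, i.e.\ the induced action on $\Gamma^*$ is the translation \eqref{pyu2}. That the constant is $1$ follows by evaluating both sides at $\zeta=0$, where $\mathfrak z(0)=\infty$, $\lambda=\lambda_*$, and $\Re\Theta_\infty(\lambda_*)=0$ by the symmetry relation on $(-\infty,-1)$, so $\tilde s_\ell(0)=e^{-\ell M_\infty(\lambda_*)}>0$; a direct check using the explicit form of $\mathfrak B(\ell,\infty)$, the definition of $\tilde C_\ell$, and the sign conventions $e_0^\alpha(0)>0$, $(be_{-1}^\alpha)(0)>0$ shows $s_\ell(0)$ is likewise real and positive.

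I expect the main obstacle to be the asymptotic step: proving simultaneously that $\phi_\ell$ has no singular boundary mass away from $\lambda=\infty$ and that its mass there is \emph{exactly} $\ell$ times that of $M_\infty$. This is the point at which the Liouville--Green (WKB) control of the Schr\"odinger transfer matrix for large $|\lambda|$, the asymptotics $\Theta_\infty(\lambda)\sim\sqrt\lambda$, and the identification of $\bigl(a_0e_{-1}^\alpha\ \ e_0^\alpha\bigr)$ with the square-integrable solution of the spectral equation all need to be brought together quantitatively.
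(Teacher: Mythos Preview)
Your overall strategy coincides with the paper's: show that $-\log|s_\ell|$ is a nonnegative harmonic function on $\tilde{\mathcal D}$ with zero boundary values a.e., represent it by Martin functions at the exceptional boundary points, and then identify the coefficient at $\lambda=\infty$ as $\ell$ via the transfer-matrix asymptotics. The identification $\kappa(\ell)=\ell$ is handled in the paper exactly by the two-sided estimate you describe, using $\lim_{\lambda\to-\infty}\tfrac{\log\|\mathfrak A(\ell,\lambda)\|}{\sqrt{|\lambda|}}=\ell$ together with $\Theta_\infty(\lambda)\sim\sqrt\lambda$.

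There is, however, a genuine gap at the step where you assert that ``$\phi_\ell$ is bounded near every other boundary point.'' Boundedness of $\mathfrak B(\ell,\mathfrak z(\cdot))$ away from $\lambda=\infty$ is not enough: the representation \eqref{e.sellratiorep} (or, equivalently, \eqref{e.Phialpha}) also involves the character-automorphic functions $e_0^\alpha$, $e_{-1}^\alpha$, $e_0^{\alpha_\ell}$, $\tilde e_0^{\alpha_\ell}$, and their behavior near the \emph{accumulation point of gaps} $\lambda=0$ (equivalently $z=-1/\lambda_*$) is not controlled by your argument. The Martin boundary of $\tilde{\mathcal D}$ has a second exceptional point there, supporting a nontrivial symmetric Martin function $M_0$, and a priori $-\log|s_\ell|$ could carry mass at it. The paper therefore starts from the ansatz
\[
s_\ell=\exp\!\Big(i\ell_1\,\Theta_0\big(\lambda_*+\tfrac{1}{\mathfrak z}\big)\Big)\exp\!\Big(i\ell_2\,\Theta_\infty\big(\lambda_*+\tfrac{1}{\mathfrak z}\big)\Big)
\]
and rules out $\ell_1>0$ by a separate argument: using the \emph{dual} row of \eqref{e.Phialpha} one bounds $|s_\ell|^{-1}$ near $\lambda=0$ in terms of $|\tilde e_0^{\alpha}|^{-1}$, and since by \eqref{e.ezerozetaformula} the function $\tilde e_0^{\alpha}$ is (outer)$\times$(Blaschke) with no singular inner factor, it cannot absorb a nontrivial Martin singular mass at $\lambda=0$. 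Your write-up needs either this argument or an independent proof that $\phi_\ell$ genuinely stays bounded near $\lambda=0$; the entireness of $\mathfrak A(\ell,\cdot)$ alone does not deliver it.
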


\begin{proof}
First of all we note that each of the functions involved in the definition of $s_{\ell}(\zeta)$ can be extended by continuity to the boundary points $\zeta\in \T$ for which $\mathfrak{z}(\zeta)\in E\setminus\{0,-\frac{1}{\lambda_*}\}$ (the first exceptional point $0$ corresponds to the point of singularity of $\mathfrak{B}(\ell,z)$ and the second one is the accumulation point of the intervals which form the set $\tilde E$). In fact, $s_{\ell}(\zeta)$ has an analytic extension to the complement of the unit disk through each arc in the preimage of the interval $[z_{j+1}^+,z_j^-]$ in the fundamental domain of the group $\Gamma$. Therefore, due to Lemma~\ref{l,sellisinner}, $s_{\ell}(\zeta)$ is necessarily of the form
\begin{equation}\label{e.formulaforsell2}
s_{\ell}(\zeta)= \exp \left( i \ell_1 \Theta_0 \left( \lambda_* + \frac{1}{\mathfrak{z}(\zeta)} \right) \right) \exp \left( i \ell_2 \Theta_\infty \left( \lambda_* + \frac{1}{\mathfrak{z}(\zeta)} \right) \right),
\end{equation}
where $M_0(\lambda) = \Im \Theta_0(\lambda)$ and $M_\infty(\lambda) = \Im \Theta_\infty(\lambda)$ are the symmetric Martin functions in the domain $\mathcal{D}$, which correspond to the two possible singularities of the function $s_{\ell}(\zeta)$ on the boundary, that is, the origin and infinity.

The formula \eqref{e.formulaforsell2} becomes the asserted formula \eqref{e.formulaforsell} once we show that $\ell_1 = 0$ and $\ell_2 = \ell$. From \eqref{e.Phialpha} we have
$$
\frac{1}{s_\ell} = \frac{\begin{pmatrix} a_0(\ell) \tilde e_0^{\alpha_\ell} & \tilde e_{-1}^{\alpha_\ell} \end{pmatrix} \mathfrak B \begin{pmatrix} 1 \\ 0 \end{pmatrix}}{a_0(\ell) \tilde e_0^{\alpha}}.
$$
Since $\mathfrak B(\frac{1}{\lambda-\lambda_*})$ is uniformly bounded in the vicinity of $0$ and the functions $|\tilde e_0^{\alpha_\ell}|^2 \circ \mathfrak z^{-1}(\frac{1}{\lambda-\lambda_*})$ and $|(b\tilde e_{-1}^{\alpha_\ell})|^2\circ \mathfrak z^{-1}(\frac{1}{\lambda-\lambda_*})$ have harmonic majorants in $\Im \lambda>0$, we have
$$
\frac{1}{|s_\ell | \circ \mathfrak z^{-1}(\frac{1}{iy-\lambda_*})}\le \frac{C}{y}\frac{1}{{|\tilde e_0^{\alpha_\ell} | \circ \mathfrak z^{-1}(\frac{1}{iy-\lambda_*})}}.
$$
That is, if
$$
0<\ell_1=
\lim_{y\to +0}\frac{\log\frac{1}{|s_\ell | \circ \mathfrak z^{-1}(\frac{1}{iy-\lambda_*})}}{M_0(iy)}\le
\lim_{y\to +0}\frac{\log\frac{1}{|\tilde e_0^{\alpha_\ell} | \circ \mathfrak z^{-1}(\frac{1}{iy-\lambda_*})}}{M_0(iy)},
$$
then $\tilde e_0^{\alpha_\ell}$ has a non-trivial singular inner factor. But according to \eqref{e.ezerozetaformula} this is an outer function times, possibly, a Blaschke product. Thus $\ell_1=0$.

Finally, from \eqref{e.Phialpha} we have the two-sided estimate
$$
\frac{\|\mathfrak B(\ell,\mathfrak z(\zeta))\|}{\| \Phi_{\alpha}(\zeta)\| \cdot \| \Phi^{-1}_{\alpha_\ell}(\zeta)\|}
\le \frac{1}{|s_\ell(\zeta)|}\le \| \Phi_{\alpha}(\zeta)\| \cdot \| \Phi^{-1}_{\alpha_\ell}(\zeta)\| \cdot \|\mathfrak B(\ell,\mathfrak z(\zeta))\|.
$$
Since for all $H^2(\alpha)$
$$
\lim_{\lambda \to -\infty} \frac{\log \| \Phi^{\pm1}_\alpha(\mathfrak z^{-1}(\frac{1}{\lambda-\lambda_*})) \|}{\sqrt{|\lambda|}} = 0,
$$
and, as is well-known,
$$
\lim_{\lambda \to -\infty} \frac{\log \| \mathfrak A(\ell,\lambda) \|}{\sqrt{|\lambda|}} = \ell,
$$
due to the normalization \eqref{e.Thetanormalization}, we get $\ell_1 = \ell$.
\end{proof}

As an immediate corollary, we have the following result, which provides an answer to the question raised in Remark~\ref{r.subspacechain}.

\begin{theorem}\label{t.subspacechain}
For every $\ell \in \R$, we have
\begin{equation}\label{e.subspacechain}
\mathcal{F} \left( L^2(\R_{+,\ell}) \right) = \exp \left( i \ell \Theta_\infty \left( \lambda_* + \frac{1}{\mathfrak{z}} \right) \right) H^2(\alpha \chi_\ell^{-1}),
\end{equation}
where $\mathcal{F}$ is the map from Theorem~\ref{t,spectral}.

In particular, the chain of subspaces $\{ \mathcal{F} \left( L^2(\R_{+,\ell}) \right) \}_{\ell \in \R}$ is transformed into the chain of subspaces $\left\{ \exp \left( i \ell \Theta_\infty \left( \lambda_* + \frac{1}{\mathfrak{z}} \right) \right) H^2(\alpha \chi_\ell^{-1}) \right\}_{\ell \in \R}$.
\end{theorem}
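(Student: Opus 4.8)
The plan is to read off the right-hand side of \eqref{e.subspacechain} from Proposition~\ref{p.sellformula}, and then to recognize the left-hand side through the translation equivariance already recorded in \eqref{e.mplusellmplus}--\eqref{e.Phialpha}.

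First I would identify the target space. By Proposition~\ref{p.sellformula} we have $s_\ell(\zeta)=\exp\bigl(i\ell\,\Theta_\infty(\lambda_*+1/\mathfrak z(\zeta))\bigr)$, which by \eqref{e.chielldef} is character-automorphic with character $\chi_\ell$ (so $\chi_\ell(\gamma_j)=e^{2i\omega_j\ell}$). Comparing the characters of the two sides of \eqref{e.alphaellflow}, and using that the entries of $\mathfrak B(\ell,\mathfrak z(\cdot))$ are $\Gamma$-automorphic, then forces $\alpha=\chi_\ell\,\alpha_\ell$, i.e.\ $\alpha_\ell=\alpha\chi_\ell^{-1}$. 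Hence $s_\ell\,H^2(\alpha_\ell)$ carries character $\alpha$, and by Lemmas~\ref{l,sellisinner} and \ref{l.characterrelation} it is a closed subspace of $L^2(\alpha)$; it is exactly $\exp\bigl(i\ell\,\Theta_\infty(\lambda_*+1/\mathfrak z)\bigr)H^2(\alpha\chi_\ell^{-1})$, the right-hand side of \eqref{e.subspacechain}. Thus the theorem reduces to the identity $\mathcal F\bigl(L^2(\R_{+,\ell})\bigr)=s_\ell\,H^2(\alpha_\ell)$.

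For that identity I would use the translation that turns the half-line based at $\ell$ into the half-line based at $0$. Let $V$ be the corresponding unitary on $L^2(\R)$, so that $V$ maps $L^2(\R_+)$ onto $L^2(\R_{+,\ell})$ and conjugates the Schr\"odinger operator of the appropriately shifted potential $q_\ell$ into $L_q$; let $\mathcal F_{q_\ell}\colon L^2(\R)\to L^2(\alpha_\ell)$ be the unitary of Theorem~\ref{t,spectral} for $q_\ell$, so $\mathcal F_{q_\ell}(L^2(\R_+))=H^2(\alpha_\ell)$. Then $\mathcal F(L^2(\R_{+,\ell}))=\Psi\bigl(H^2(\alpha_\ell)\bigr)$ with $\Psi:=\mathcal F\,V\,\mathcal F_{q_\ell}^{-1}$, and $\Psi$ commutes with multiplication by $\mathfrak z$ because $\mathcal F$ intertwines $L_q$, $\mathcal F_{q_\ell}$ intertwines $L_{q_\ell}$, and $V$ conjugates $L_{q_\ell}$ to $L_q$ (all with multiplication by $\lambda_*+1/\mathfrak z$ in between). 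Since, by Proposition~\ref{p.fromJtoH2alpha}, every element of $L^2(\alpha_\ell)$ is uniquely $e_{-1}^{\alpha_\ell}P_{-1}(\mathfrak z)+e_0^{\alpha_\ell}P_0(\mathfrak z)$ and $\Psi$ commutes with bounded functions of $\mathfrak z$, it is enough to evaluate $\Psi$ on the two central basis vectors $e_{-1}^{\alpha_\ell},e_0^{\alpha_\ell}$. Tracing these through the identifications of Sections~\ref{s.2}--\ref{s.3}: they correspond to the Jost vectors $e_\pm$ for $q_\ell$, the shift $V$ turns these into the Jost solutions of $L_q$ based at the appropriate point, and $\mathcal F$ carries the latter---by exactly the similarity relations \eqref{e.mplusellmplus}--\eqref{e.alphaellflow}, equivalently by the frame identity \eqref{e.Phialpha}, $\mathrm{diag}(s_\ell,s_\ell^{-1})\Phi_{\alpha_\ell}=\Phi_\alpha\,\mathfrak B(\ell,\mathfrak z)$---onto $s_\ell\,e_0^{\alpha_\ell}$ and $s_\ell\,e_{-1}^{\alpha_\ell}$. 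Hence $\Psi$ is multiplication by $s_\ell$, so $\Psi(H^2(\alpha_\ell))=s_\ell\,H^2(\alpha_\ell)$, which with the previous paragraph gives \eqref{e.subspacechain}. The ``in particular'' assertion then follows by letting $\ell$ range over $\R$.

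The analytic substance is already in place: Proposition~\ref{p.sellformula}, Lemma~\ref{l.characterrelation}, and the cocycle identity \eqref{e.Phialpha} do the real work, which is why the statement is an immediate corollary. The one point that requires care---and which I expect to be the main obstacle in writing it out---is the purely bookkeeping task of matching conventions in the last step: fixing the direction of the translation $V$ and the precise meaning of ``$q_\ell$'', the normalizations of the Jost solutions $u_\pm(\cdot,\ell,\lambda_*)$, and the constant $\mathrm{SL}(2,\R)$ matrices $\tilde C_0,\tilde C_\ell$ from \eqref{e.mplusrplussimilarity}, so that $\Psi$ indeed sends the central basis vectors exactly to $s_\ell$ times themselves rather than to $s_\ell$ times some unimodular rotation of them. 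Once the signs are pinned down consistently with Section~\ref{s.4}, the computation is routine.
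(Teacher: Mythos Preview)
Your proposal is correct and is precisely the argument the paper has in mind. The paper gives no explicit proof, stating the theorem only as ``an immediate corollary'' of Proposition~\ref{p.sellformula}; you have correctly unpacked this by combining the explicit formula for $s_\ell$ with the translation-equivariance encoded in \eqref{e.alphaellflow}--\eqref{e.Phialpha} and the spectral identification of Theorem~\ref{t,spectral}, and you have also correctly flagged the sign/direction bookkeeping for $T_\ell$ versus the translation $V$ as the only point requiring care.
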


\begin{remark}
(a) On the right-hand side of \eqref{e.subspacechain}, we do not specify which intermediate Hardy space $H^2(\alpha \chi_\ell^{-1})$ arises in situations where $\alpha \chi_\ell^{-1} \in \mathcal{NTF}$.

(b) Theorem~\ref{t.subspacechain} should be compared with the classical Payley-Wiener result stating that
$$
F \left( L^2(\R_{+,\ell}) \right) = \exp \left( i \ell z \right) H^2
$$
for every $\ell \in \R$, where $F$ denotes the Fourier transform on $L^2(\R)$.
\end{remark}

\begin{prop}\label{p.flowrelation}
Via the composition of the maps
$$
q(E) \to J(\tilde E) \to \Gamma^*, \quad q \mapsto J \mapsto \alpha,
$$
the push-forward of the flow $T_\ell$ on $q(E)$ is the flow $\mathcal{S}_\ell$ on $\Gamma^*$. In particular, the push-forward is minimal and uniquely ergodic with respect to the Haar measure on $\Gamma^*$.
\end{prop}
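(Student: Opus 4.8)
The plan is to establish the intertwining identity $\pi \circ T_\ell = \mathcal{S}_\ell \circ \pi$, where $\pi$ denotes the composition $q \mapsto J \mapsto \alpha$ of the statement, by a direct comparison of characters on the two sides of \eqref{e.alphaellflow}, and then to read off minimality and unique ergodicity of the resulting factor from the independence hypothesis \eqref{e.frequenciesindependent}. Write $\alpha = \pi(q)$ and $\alpha_\ell = \pi(T_\ell q)$, so that these are exactly the characters featuring in \eqref{e.alphaellflow}. The one substantive step is to apply an arbitrary $\gamma \in \Gamma$ (i.e.\ to replace $\zeta$ by $\gamma(\zeta)$) in \eqref{e.alphaellflow} and track how each factor transforms. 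On the right-hand side, $e_0^\alpha$ and $e_{-1}^\alpha$ both pick up the character $\alpha$ (recall $e_0^\alpha \in H^2(\alpha)$ and $b e_{-1}^\alpha \in H^2(\alpha\mu)$), while every entry of $\mathfrak{B}(\ell,\mathfrak{z}(\cdot)) = \tilde C_0\, \mathfrak{A}(\ell,\lambda_*+1/\mathfrak{z}(\cdot))\, \tilde C_\ell^{-1}$ is $\Gamma$-automorphic, since $\mathfrak{A}(\ell,\cdot)$ is entire and $\mathfrak{z}\circ\gamma = \mathfrak{z}$; hence the right-hand side transforms with $\alpha$. On the left-hand side, $e_0^{\alpha_\ell}$ and $e_{-1}^{\alpha_\ell}$ pick up $\alpha_\ell$, and here Proposition~\ref{p.sellformula} enters: since $s_\ell = \exp(i\ell\Theta_\infty(\lambda_*+1/\mathfrak{z}))$, it picks up $\chi_\ell$ by \eqref{e.chielldef}, so the left-hand side transforms with $\chi_\ell\alpha_\ell$. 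Because $e_0^{\alpha_\ell}$ is not identically zero, comparing (say) the second components of \eqref{e.alphaellflow} forces $\chi_\ell\alpha_\ell = \alpha$, i.e.\ $\alpha_\ell = \chi_\ell^{-1}\alpha$, which by \eqref{pyu2} is precisely $\alpha_\ell = \mathcal{S}_\ell\alpha$. Thus $\pi(T_\ell q) = \mathcal{S}_\ell(\pi(q))$ for every $q \in q(E)$ and every $\ell \in \R$.

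Next I would check that $\mathcal{S}_\ell$ is genuinely the push-forward of $T_\ell$: by Remark~\ref{r.topologies} the map $\pi \colon q(E) \to \Gamma^*$ is continuous, and by Theorem~\ref{t,spectral} every intermediate Hardy space $H^2(\alpha)$, and hence every character $\alpha \in \Gamma^*$, is realized by some $q \in q(E)$, so $\pi$ is surjective. Combined with the equivariance from the previous step, this presents $(\Gamma^*,\mathcal{S}_\ell)$ as the topological factor of $(q(E),T_\ell)$ under $\pi$; that is, the push-forward of $T_\ell$ is the flow $\mathcal{S}_\ell$.

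It then remains to verify that $\mathcal{S}_\ell$ is minimal and uniquely ergodic with respect to the Haar measure on $\Gamma^*$. Minimality is immediate: \eqref{e.frequenciesindependent} is exactly the assertion that each $\mathcal{S}_\ell$-orbit is dense in $\Gamma^* \simeq \T^\infty$. For unique ergodicity I would invoke the standard Fourier-analytic criterion on the compact abelian group $\Gamma^*$, whose Pontryagin dual is generated by the coordinate characters and hence consists of the characters $\psi_{(n_k)}(\alpha) = \prod_k \alpha_k^{n_k}$ indexed by finitely supported $(n_k) \in \bigoplus_{k\ge1}\Z$, with $\psi_{(n_k)}(\chi_\ell) = \exp(2i\ell\sum_k n_k\omega_k)$ by \eqref{e.chielldef}. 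By the Kronecker--Weyl theorem, \eqref{e.frequenciesindependent} is equivalent to the rational independence of $\{\omega_k\}_{k\ge1}$, so for every $(n_k) \ne 0$ there is an $\ell$ with $\psi_{(n_k)}(\chi_\ell) \ne 1$. If $\mu$ is any Borel probability measure on $\Gamma^*$ invariant under the whole flow $\{\mathcal{S}_\ell\}$, then $\widehat{\mu}(\psi_{(n_k)}) = \psi_{(n_k)}(\chi_\ell^{-1})\,\widehat{\mu}(\psi_{(n_k)})$ for every $\ell$, whence $\widehat{\mu}(\psi_{(n_k)}) = 0$ for all $(n_k) \ne 0$, so $\mu$ is the Haar measure; this is unique ergodicity.

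Finally, a word on where the difficulty sits: the genuine analytic content---the identification of $s_\ell$ with $\exp(i\ell\Theta_\infty(\lambda_*+1/\mathfrak{z}))$---has already been carried out in Proposition~\ref{p.sellformula}, so the present proposition reduces to character bookkeeping. The two small points I would be careful to make explicit are (i) that the entries of $\mathfrak{B}(\ell,\mathfrak{z}(\cdot))$ carry no character, which is what pins down the character of the right-hand side of \eqref{e.alphaellflow} and is a direct consequence of $\mathfrak{A}(\ell,\cdot)$ being entire in $\lambda$; and (ii) the equivalence of the independence hypothesis \eqref{e.frequenciesindependent} with rational independence of the frequencies $\omega_k$, which is the engine behind both minimality and unique ergodicity.
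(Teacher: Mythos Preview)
Your argument is correct and follows the same overall arc as the paper: identify the character of $s_\ell$ as $\chi_\ell$ via Proposition~\ref{p.sellformula} and \eqref{e.chielldef}, deduce $\alpha_\ell=\chi_\ell^{-1}\alpha$, and then invoke \eqref{e.frequenciesindependent} for minimality and unique ergodicity.

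There is one minor methodological difference worth noting. The paper obtains the relation $\chi_\ell\alpha_\ell=\alpha$ by citing Lemma~\ref{l.characterrelation}, i.e.\ the Hardy-space inclusion $s_\ell H^2(\alpha_\ell)\subseteq H^2(\alpha)$, and then reading off characters. You bypass Lemma~\ref{l.characterrelation} entirely and extract the character relation directly from \eqref{e.alphaellflow}, using only that $\mathfrak{B}(\ell,\mathfrak{z}(\cdot))$ is $\Gamma$-automorphic (since $\mathfrak{A}(\ell,\cdot)$ is entire and $\mathfrak{z}\circ\gamma=\mathfrak{z}$). For the present proposition your route is more economical; Lemma~\ref{l.characterrelation} is of course still needed elsewhere (e.g.\ for Theorem~\ref{t.subspacechain}), so nothing is lost in the paper's development. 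Your more explicit treatment of minimality and unique ergodicity via the Kronecker--Weyl argument on $\Gamma^*\simeq\T^\infty$ simply unpacks what the paper leaves as an immediate consequence of \eqref{e.frequenciesindependent}.
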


\begin{proof}
Recall from \eqref{e.chielldef} that $\chi_\ell$ is the character of $s_\ell$. Recall also that the generalized Abel map $\tilde \pi$ sends the Jacobi matrix corresponding to $T_\ell(q)$ to the character $\alpha_\ell \in \Gamma^*$, that is, the flow $\ell \mapsto \alpha_\ell$ is precisely the one associated with the translation flow on $q(E)$ with initial condition $q$. From Lemma~\ref{l.characterrelation} we see that $\chi_\ell \alpha_\ell = \alpha$, that is,
\begin{equation}\label{e.alphaellchiell}
\alpha_\ell = \chi_\ell^{-1} \alpha.
\end{equation}
Combining \eqref{e.chielldef} and \eqref{e.alphaellchiell}, the first statement of the proposition follows. The second statement follows from the first statement along with the assumption \eqref{e.frequenciesindependent}.
\end{proof}

\begin{remark}\label{r.measures}
The map $\pi : q(E) \to \Gamma^*$, $q \mapsto \alpha$ from Proposition~\ref{p.flowrelation} is called the generalized Abel map. It is continuous with respect to the compact-open topology on $q(E)$. Since $q(E)$ is compact in the compact-open topology, it admits a translation invariant probability measure $dq$. By Proposition~\ref{p.flowrelation} the push-forward of this measure, $\pi_*(dq)$, is a translation invariant probability measure on $\Gamma^*$. Due to assumption \eqref{e.frequenciesindependent}, $\pi_*(dq)$ is equal to normalized Haar measure on $\Gamma^*$. By \eqref{e.aentf}, the generalized Abel map $\pi$ is almost everywhere 1-1. Thus, the measure $dq$ is actually uniquely determined by $\pi$ and Haar measure on $\Gamma^*$. In particular, $q(E)$ is uniquely ergodic with respect to the shift action. The corresponding measure on $D(E)$ (namely, the push-forward of the measure $dq$ under the map $q(E) \to D(E)$ discussed in Remark~\ref{r.topologies}) was shown in \cite[Section~7]{VY14} to assign positive measure to any non-empty open set. Pulling back this property, we find that $dq$ assigns positive weight to any non-empty open set in $q(E)$.
\end{remark}

\section{Proof of the Main Theorem and Corollary}\label{s.5}

In this section we prove Theorem~\ref{t.main} and Corollary~\ref{c.main}. Given our results up to this point, the proofs of these statements are analogous to the proofs given in \cite{VY14} of the corresponding statements in the Jacobi matrix case. We provide the details for the convenience of the reader.

\begin{proof}[Proof of Theorem~\ref{t.main}]
(a) Assume that $E$ satisfies DCT. Then, by \eqref{e.dctcharacterization}, we have $\mathcal{NTF} = \emptyset$. Thus, the map between $q \in q(E)$ and $\alpha \in \Gamma^*$ is a homeomorphism; compare, for example, \cite{SY95}. Since the translation flow on $q(E)$ corresponds to a strictly ergodic translation flow on the compact Abelian group $\Gamma^*$ via this homeomorphism, it follows that every $q \in q(E)$ is almost periodic.

(b) Assume that $E$ does not satisfy DCT. We have to show that every $q \in q(E)$ is not almost periodic. We will proceed in three steps. In the first step, we show that there exists a $q \in q(E)$ that is not almost periodic. In the second step we show that almost every $q \in q(E)$ is not almost periodic. In the third and final step, we then show the result in complete generality, that is, indeed every $q \in q(E)$ is not almost periodic. The reason for breaking up the proof into three steps is that each step relies on the previous one.

Let us begin with the first step. As $E$ does not satisfy DCT, by \eqref{e.dctcharacterization}, we have $\mathcal{NTF} \not= \emptyset$. Choose any $\alpha \in \mathcal{NTF}$ and consider the (distinct!) potentials $q(\hat H^2(\alpha)), q(\check H^2(\alpha)) \in q(E)$ corresponding to the spaces $\check H^2(\alpha) \subsetneq \hat H^2(\alpha)$. We claim that at least one of these potentials is not almost periodic. Assume to the contrary that both are almost periodic. By Proposition~\ref{p.flowrelation}, \eqref{e.frequenciesindependent}, and \eqref{e.aentf} it is possible to choose $\beta \in \Gamma^* \setminus \mathcal{NTF}$ and a sequence $\ell_j \to \infty$ such that $\mathcal{S}_{\ell_j} \alpha \to \beta$ as $j \to \infty$, and $\mathcal{S}_{\ell_j} \alpha$ is the character associated with both $T_{\ell_j} q(\hat H^2(\alpha))$ and $T_{\ell_j} q(\check H^2(\alpha))$. Since $\beta \in \Gamma^* \setminus \mathcal{NTF}$, there exists a unique $q_\beta \in q(E)$ associated with this character. By the almost periodicity of $q(\hat H^2(\alpha)), q(\check H^2(\alpha))$ and the choices above, we can (twice) pass to a subsequence of $\{ \ell_j \}$, which we still denote by $\{ \ell_j \}$, such that
$$
\lim_{j \to \infty} \| T_{\ell_j} q(\hat H^2(\alpha)) - q_\beta \|_\infty = \lim_{j \to \infty} \| T_{\ell_j} q(\check H^2(\alpha)) - q_\beta \|_\infty = 0.
$$
But this is a contradiction since
\begin{align*}
0 & < \| q(\hat H^2(\alpha)) - q(\check H^2(\alpha)) \|_\infty \\
& = \| T_{\ell_j} q(\hat H^2(\alpha)) - T_{\ell_j} q(\check H^2(\alpha)) \|_\infty \\
& \le \| T_{\ell_j} q(\hat H^2(\alpha)) - q_\beta \|_\infty + \| q_\beta - T_{\ell_j} q(\check H^2(\alpha)) \|_\infty \\
& \to 0 \text{ as } j \to \infty.
\end{align*}
This completes the first step.

For the second step, fix some $q_0 \in q(E)$ that is not almost periodic. By the first step, such a $q_0$ exists. Choose open (in the compact-open topology) sets $O_n$, $n \ge 1$, in $q(E)$ with
$$
O_1 \supset O_2 \supset \cdots \supset O_n \supset \cdots, \quad \bigcap_{n \ge 1} O_n = \{ q_0 \}.
$$
By Remark~\ref{r.measures}, the measure $dq$ on $q(E)$ assigns strictly positive weight to each $O_n$. Thus, if we consider the shift-invariant set
$$
V_n = \bigcup_{\ell \in \R} T_\ell(O_n),
$$
it follows from ergodicity of $dq$ that $dq$ assigns full weight to each $V_n$, $n \ge 1$. As a consequence, the set
$$
V := \bigcap_{n \ge 1} V_n
$$
has full $dq$ measure. We claim that every $q \in V$ is not almost periodic. Suppose to the contrary that there exists $q_1 \in V$ that is almost periodic. By construction of the set $V$, $q_0$ is an accumulation point (in the compact-open topology) of the set of translates of the almost periodic function $q_1$, which in turn implies that $q_0$ is almost periodic as well;\footnote{If $q_0$ is the limit of $T_{\ell_k} q_1$ with respect to uniform convergence on compact subsets, by almost periodicity of $q_1$ we can choose a subsequence of $\{ T_{\ell_k} q_1 \}$ that converges uniformly to an almost periodic limit. This limit must of course also be the limit in the compact-open topology and hence be equal to $q_0$. This implies that $q_0$ is almost periodic.} contradiction. This shows that every $q$ in the full measure subset $V$ of $q(E)$ is not almost periodic, and this completes the second step.

For the third step, let $q \in q(E)$ be arbitrary and assume that $q$ is almost periodic. Then any accumulation point (in the compact-open topology) of translates of $q$ must be almost periodic as well. Let us force accumulation on a point that is already known to be not almost periodic in order to get a contradiction. With the generalized Abel map $\pi : q(E) \to \Gamma^*$, let $\alpha = \pi(q)$. By Proposition~\ref{p.flowrelation}, we have $\pi(T_\ell q) = \mathcal{S}_\ell \alpha$. Recall from \eqref{e.frequenciesindependent} that $\{\mathcal S_\ell\alpha : \ell\in\R\}$ is dense. Since both $\pi(V)$ and $\Gamma^* \setminus \mathcal{NTF}$ have full Haar measure, their intersection has full Haar measure, and in particular it is not empty. Thus, let us choose $\beta$ in this intersection, and then $\{ \ell_n \} \subset \R$ with $\mathcal S_{\ell_n} \alpha \to \beta$. Now, since $\beta \in \Gamma^* \setminus \mathcal{NTF}$, there is a unique $\tilde q \in q(E)$ with $\pi(\tilde q) = \beta$, and since $\beta \in \pi(V)$, $\tilde q$ is not almost periodic by the previous step. Since $T_{\ell_n} q \to \tilde q$ by construction, we have accomplished what we wanted, namely, we have found a non-almost periodic accumulation point of translates of an almost periodic function, which is impossible. This contradiction shows that $q$ is indeed not almost periodic, and this completes the third step and the proof of the theorem.
\end{proof}

\begin{proof}[Proof of Corollary~\ref{c.main}]
Consider a comb domain $\Pi(\{\omega_k,h_k\}_{k=1}^\infty)$ subject to the assumptions \eqref{e.discreteomegaks}--\eqref{e.frequenciesindependent}. Suppose $E$ is the image of $\R_+$ under {\rm (}the continuous extension to the closure of{\rm )} a conformal map sending the comb domain to the upper half-plane. The corollary follows from Theorem~\ref{t.main} as soon as it is established that under these assumptions it is possible that the set $E$ does not satisfy DCT. One can use, for example, conformal images of the sets discussed in \cite[Proposition~1.10]{VY14} (note that the condition \eqref{e.frequenciesindependent} can indeed be ensured for appropriate choices of gap boundaries within the framework of \cite[Proposition~1.10]{VY14}), which were shown there to satisfy the Widom condition but not DCT. Of course, the Widom condition and DCT are conformally invariant. This concludes the proof.
\end{proof}

\begin{appendix}

\section{The Kotani-Last Conjecture for Extended CMV Matrices}

In this appendix we discuss the Kotani-Last conjecture for extended CMV matrices and how to disprove it by following the strategy we employed in the main body of the paper for continuum Schr\"odinger operators.

CMV matrices arise in the study of orthogonal polynomials on the unit circle, and they are the canonical matrix representation of a unitary operator on a separable Hilbert space with a cyclic vector. By the spectral theorem, the latter operator is unitarily equivalent to multiplication by the independent variable in $L^2(\T,d\nu)$. Applying the Gram-Schmidt orthonormalization procedure to $1,z,z^{-1},z^2,z^{-2},\ldots$, we obtain a basis of $L^2(\T,d\nu)$ with respect to which the operator is represented by the matrix
$$
\mathcal{C} = \begin{pmatrix}
{}& \bar\upsilon_0 & \bar\upsilon_1 \rho_0 & \rho_1
\rho_0
& 0 & 0 & \dots & {} \\
{}& \rho_0 & -\bar\upsilon_1 \upsilon_0 & -\rho_1
\upsilon_0
& 0 & 0 & \dots & {} \\
{}& 0 & \bar\upsilon_2 \rho_1 & -\bar\upsilon_2 \upsilon_1 &
\bar\upsilon_3 \rho_2 & \rho_3 \rho_2 & \dots & {} \\
{}& 0 & \rho_2 \rho_1 & -\rho_2 \upsilon_1 &
-\bar\upsilon_3
\upsilon_2 & -\rho_3 \upsilon_2 & \dots & {} \\
{}& 0 & 0 & 0 & \bar\upsilon_4 \rho_3 & -\bar\upsilon_4
\upsilon_3
& \dots & {} \\
{}& \dots & \dots & \dots & \dots & \dots & \dots & {}
\end{pmatrix},
$$
where $\upsilon_n \in \D = \{ w \in \C : |w| < 1 \}$ and $\rho_n = (1-|\upsilon_n|^2)^{1/2}$. The parameters $\{ \upsilon_n \}_{n \in \Z_+}$ are called Verblunsky coefficients. A matrix $\mathcal{C}$ of this form is called a CMV matrix. We refer the reader to \cite{S05, S05b} for background on orthogonal polynomials on the unit circle and CMV matrices.

The matrix $\mathcal{C}$ acts in $\ell^2(\Z_+)$. Of course, there is a natural two-sided extension of $\mathcal{C}$, which acts in $\ell^2(\Z)$. Such a two-sided infinite matrix is determined by Verblunsky coefficients $\{ \upsilon_n \}_{n \in \Z}$, it is denoted by $\mathcal{E}$, and it is called an extended CMV matrix.

The two-sided situation is natural if the coefficients are defined by suitable sampling along the orbits of an invertible map. To fix notation, let $\Omega$ be a compact metric space, $T : \Omega \to \Omega$ a homeomorphism, $d\mu$ a $T$-ergodic Borel probability measure on $\Omega$, and $f : \Omega \to \D$ measurable. Then, we have coefficient sequences
$$
\upsilon_n(\omega) = f(T^n \omega), \quad \omega \in \Omega, \; n \in \Z.
$$
The extended CMV matrix corresponding to the sequence $\upsilon(\omega) = \{ \upsilon_n(\omega) \}_{n \in \Z}$ is denoted by $\mathcal{E}_\omega$. As in the Schr\"odinger case, we have the following general result. There are sets $\Sigma, \Sigma_\mathrm{ac}, \Sigma_\mathrm{sc}, \Sigma_\mathrm{pp} \subseteq \T$ and a set $\Omega_0 \subseteq \Omega$ with $\mu(\Omega_0) = 1$ such that $\sigma(\mathcal{E}_\omega) = \Sigma$ and $\sigma_\bullet(\mathcal{E}_\omega) = \Sigma_\bullet$, $\bullet \in \{ \mathrm{ac}, \mathrm{sc}, \mathrm{pp} \}$, for every $\omega \in \Omega_0$.

In this setting, the Kotani-Last conjecture takes the same form as before:

\begin{klc}
If the family $\{ \mathcal{E}_\omega \}_{\omega \in \Omega}$ is such that $\Sigma_\mathrm{ac} \not= \emptyset$, then the sequences $\{ \upsilon(\omega) \}_{\omega \in \Omega}$ are almost periodic.
\end{klc}

As before this means that for each $\upsilon(\omega)$, the set of its translates is relatively compact in $\ell^\infty(\Z)$ or, equivalently, that we can choose $\Omega$ to be a compact abelian group, $T$ a minimal translation, $\mu$ Haar measure, and $f$ continuous. Slightly abusing terminology, we will say that an extended CMV matrix $\mathcal{E}$ is almost periodic if its Verblunsky coefficients form an almost periodic sequence.

Our goal is to show that the Kotani-Last conjecture for extended CMV matrices fails:

\begin{theorem}\label{tecmvklcfails}
There are families $\{ \mathcal{E}_\omega \}_{\omega \in \Omega}$ of the form above such that for every $\omega \in \Omega$, the extended CMV matrix $\mathcal{E}_\omega$ has purely absolutely continuous spectrum and is not almost periodic.
\end{theorem}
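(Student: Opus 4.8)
The plan is to transfer, essentially verbatim, the machinery of Sections~\ref{s.2}--\ref{s.5} to the CMV setting, taking advantage of the fact that extended CMV matrices are already discrete objects, so the shift is a genuine $\Z$-action and the argument runs even closer to the original Jacobi matrix analysis of Volberg--Yuditskii than the continuum Schr\"odinger case does. First I would fix the class of admissible spectra: a closed set $E \subset \T$ that is the complement of countably many disjoint open arcs, subject to a Widom-type condition on $\mathcal{D} = \bar\C \setminus E$ (the analog of \eqref{e.widomcondition}: the critical values of the Green function of $\mathcal D$ are summable) and an independence condition on the harmonic-measure ``frequencies'' ensuring that the orbit closure of the induced $\Z$-action on $\Gamma^*$ is all of $\Gamma^*$. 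The data can be packaged by an analogous conformal ``circular comb'' model, but this is inessential. Let $\D/\Gamma \simeq \mathcal D$ be a uniformization with character group $\Gamma^*$, and let $v(E)$ denote the set of (two-sided Verblunsky sequences of) extended CMV matrices $\mathcal E$ with $\sigma(\mathcal E)=E$ that are reflectionless on $E$, the reflectionlessness being phrased as usual in terms of the half-line Carath\'eodory (equivalently Schur) functions. As in the Schr\"odinger case, every such $\mathcal E$ automatically has purely absolutely continuous spectrum equal to $E$, which is exactly what Theorem~\ref{tecmvklcfails} requires.

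The second and most substantial step is the static correspondence $v(E) \leftrightarrow \{H^2(\alpha) : \alpha \in \Gamma^*\}$, the CMV analog of the combined content of Section~\ref{s.2} together with Propositions~\ref{p.fromh2alphatoJ}--\ref{p.fromJtoH2alpha}. There are two routes. The first is to Cayley-transform: after a rotation one may assume $1 \notin E$, so $\mathcal E$ is a bounded self-adjoint operator with the natural cyclic pair $\{\delta_{-1},\delta_0\}$, whence a two-sided Jacobi matrix $J$ with $\sigma(J)=\widetilde E$, the Cayley image of $E$; the reflectionless condition on $E$ for $\mathcal E$ translates into reflectionlessness on $\widetilde E$ for $J$, and one invokes \cite{VY14} directly. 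The second, cleaner route is to build the character-automorphic functional model on $\mathcal D$ directly from the matrix-valued Carath\'eodory function of $(\delta_{-1},\delta_0)$, exactly as in Proposition~\ref{p.fromJtoH2alpha} and the reproducing-kernel formula \eqref{e.h2alpharepkernelformula}, and to check that $\ell^2(\Z_+)$ is carried onto an intermediate Hardy space $\check H^2(\alpha) \subseteq H^2_{\mathcal E} \subseteq \hat H^2(\alpha)$. Either way one obtains the generalized Abel map $\pi : v(E) \to \Gamma^*$ together with a spectral representation: a unitary $\mathcal F : \ell^2(\Z) \to L^2(\alpha)$ with $\mathcal F(\ell^2(\Z_+)) = H^2(\alpha)$ intertwining $\mathcal E$ with the appropriate multiplication operator. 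I expect the verification of the reflectionlessness dictionary and the attendant Smirnov-class bookkeeping to be the main technical obstacle here --- this is where the circle-versus-line distinction really bites --- although it is entirely parallel to what has already been carried out, and substantial parts of it are available in the OPUC literature.

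The third step is the dynamics, the analog of Section~\ref{s.4}. The Verblunsky shift $S:v(E)\to v(E)$, $\{\upsilon_n\}\mapsto\{\upsilon_{n+1}\}$, is studied through the Szeg\H{o} transfer matrices: as in the derivation of \eqref{e.Phialpha}, the shifted basis is related to the original one by multiplication by a scalar $s$ together with a transfer-matrix factor that is $\mathfrak{J}$-contractive, which (via the transcriptions of Lemmas~\ref{l.Jcontraction}--\ref{l,sellisinner}) forces $s$ to be an inner function with $sH^2(S\alpha)\subseteq H^2(\alpha)$, hence $S\alpha = \chi^{-1}\alpha$ where $\chi\in\Gamma^*$ is the character of $s$; the analog of Proposition~\ref{p.sellformula} identifies $s$ and $\chi$ explicitly in terms of the harmonic measures of $E$. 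Thus $\pi$ conjugates $S$ to the rotation $\alpha\mapsto\chi^{-1}\alpha$ on $\Gamma^*$, which under the independence condition is minimal and uniquely ergodic.

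Finally, the dichotomy and the construction of counterexamples go through exactly as in Section~\ref{s.5}. If $\mathcal D$ satisfies DCT, then $\mathcal{NTF}=\emptyset$, $\pi$ is a homeomorphism, and every $\mathcal E\in v(E)$ is almost periodic. If DCT fails, then $\mathcal{NTF}\neq\emptyset$, and the three-step argument from the proof of Theorem~\ref{t.main} --- using that $v(E)$ is compact, uniquely ergodic, and that the invariant measure charges every nonempty open set --- shows that \emph{every} $\mathcal E\in v(E)$ fails to be almost periodic. Since there exist sets $E\subset\T$ that are Widom but violate DCT, e.g.\ conformal images of the sets from \cite[Proposition~1.10]{VY14}, for which the independence condition can also be arranged (the Widom condition and DCT being conformally invariant), such $E$ produce families $\{\mathcal E_\omega\}_{\omega\in\Omega}$ of the required form, establishing Theorem~\ref{tecmvklcfails}.
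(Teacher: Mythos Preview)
Your overall strategy matches the paper's: set up the functional model on $\mathcal D_2 = \bar\C \setminus E_2$, identify the induced shift on characters, prove the DCT dichotomy, and exhibit Widom-but-not-DCT sets on the circle. The paper takes a shortcut you do not: rather than redeveloping the correspondence and the shift action from scratch (or via a Cayley reduction to Jacobi matrices), it invokes Peherstorfer--Yuditskii \cite{PY06} directly for both ingredients and then runs the argument of Section~\ref{s.5}.

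There is, however, one substantive point you miss. In the CMV setting the generalized Abel map lands not in $\Gamma^*$ but in $\Gamma^* \times \T$: the bijection from \cite{PY06} is between $\{H^2(\alpha)\} \times \T$ and $D(E_2)$, and the push-forward of the Verblunsky shift is translation by $(\mu_i, e^{i\psi_1})$ for an explicit unimodular constant $e^{i\psi_1}$ given by \eqref{e.psionedef}. Unlike the Schr\"odinger case, where the spectrum contains a half-line and the unbounded gap carries no divisor, here every gap of $E_2 \subset \T$ is a finite arc, and the zeroth one contributes a coordinate not captured by $\Gamma^*$. Consequently the correct independence hypothesis is \eqref{e.frequenciesindependent2}, which couples $\psi_1$ with the frequencies $\omega_k - \omega_0$; minimality and unique ergodicity must be checked on $\Gamma^* \times \T$, not on $\Gamma^*$ alone. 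Your formulation as stated identifies the wrong flow, though the three-step argument of Section~\ref{s.5} goes through verbatim once this is corrected.

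A minor slip: $\mathcal E$ is unitary, not self-adjoint. In your first route you presumably mean its Cayley transform, which is bounded self-adjoint once $1 \notin E_2$; but that operator is not tridiagonal in the standard basis, so extracting a Jacobi matrix in $J(\tilde E)$ requires more than the cyclic-pair remark you make.
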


As mentioned already, the strategy will be the same as the one employed above for continuum Schr\"odinger operators. Thus, consider a set $E_2 \subset \T$ in the complex ($\varphi$-) plane. We write
\begin{equation}\label{e.e2def}
\T \setminus E_2 = \bigcup_{k = 0}^\infty (\varphi_k^-,\varphi_k^+),
\end{equation}
where the arc $(\varphi_k^-,\varphi_k^+) \subset \T$ represents the $k$-th gap of $E_2$. Let $\mathcal{D}_2 := \bar \C \setminus E_2$.

Now let us consider the extended CMV matrices we want to associate with the set $E_2$. We will require that their spectrum is equal to $E_2$ and that they are reflectionless on $E_2$. To make the latter condition explicit, let us recall that we can associate a Schur function $s_+$ with the coefficients $\{ \upsilon_n \}_{n \in \Z_+}$ so that
$$
s_+(\varphi) = \frac{\upsilon_0 + \varphi s^{(1)}_+(\varphi)}{1 + \varphi \bar \upsilon_0 s_+^{(1)}(\varphi)}, \; \upsilon_0 = s_+(0), \quad s_+^{(1)}(\varphi) = \frac{\upsilon_1 + \varphi s^{(2)}_+(\varphi)}{1 + \varphi \bar \upsilon_1 s_+^{(2)}(\varphi)}, \; \upsilon_1 = s_+^{(1)}(0), \ldots .
$$
Similarly, we can associate a Schur function $s_-$ with the coefficients $\{ \upsilon_n \}_{n \in \Z_-}$. The extended CMV matrix corresponding to the Verblunsky coefficients $\{ \upsilon_n \}_{n \in \Z}$ is called reflectionless on $E_2$ if we have $\overline{\varphi s_+(\varphi)} = s_-(\varphi)$ Lebesgue almost everywhere on $E_2$. We will denote the set of extended CMV matrices which are reflectionless on $E_2$ and have spectrum $E_2$ by $\mathcal{E}(E_2)$. We equip the set $\mathcal{E}(E_2)$ with the topology of strong operator convergence, which on the level of Verblunsky sequences means that they converge pointwise.

If all reflectionless measures on $E_2$ are absolutely continuous, the set $\mathcal{E}(E_2)$ may be parametrized, as usual, in terms of divisors $D \in D(E_2)$. Each gap $(\varphi_k^-,\varphi_k^+)$ of $E_2$ corresponds to a circle, namely $\{ (\varphi_k , \varepsilon_k) : \varphi_k \in [\varphi_k^-,\varphi_k^+], \varepsilon_k = \pm 1 \}$ with the identifications $(\varphi_k^-,-1) = (\varphi_k^-,1)$ and $(\varphi_k^+,-1) = (\varphi_k^+,1)$, and with the standard topology on this circle. The space $D(E_2)$ of divisors $\{ (\varphi_k , \varepsilon_k) : k \in \Z_+ \}$ is the product of these circles equipped with the product topology. It is homeomorphic to $\mathcal{E}(E_2)$.

As before we will use uniformization for the given domain and elements of potential theory and character-automorphic functions. For the uniformization we can use the same map $\mathfrak{z}$ as before once we apply a suitable fractional linear transformation. Choose $\varphi_* \in (\varphi_0^-,\varphi_0^+)$ and define the new variable
\begin{equation}\label{e.efromvtoz}
z = i \frac{\varphi_* + \varphi}{\varphi_* - \varphi}.
\end{equation}
Its inverse is given by
\begin{equation}\label{e.efromztov}
\varphi = \varphi_* \frac{z - i}{z + i}.
\end{equation}

Note that \eqref{e.efromvtoz} maps $\varphi_* \mapsto \infty$, $0 \mapsto i$, $\infty \mapsto - i$, and it maps the set $E_2$ to the compact set
$$
\tilde E_2 = \left\{ i \frac{\varphi_* + \varphi}{\varphi_* - \varphi} : \varphi \in E_2 \right\}
$$
in the $z$-plane. We consider a uniformization  $\D / \Gamma \simeq \tilde{\mathcal{D}}_2$ of $\tilde{\mathcal{D}}_2 := \bar \C \setminus \tilde E_2$ with  a Fuchsian group $\Gamma$ and a meromorphic function $\mathfrak{z} : \D \to \tilde{\mathcal{D}}_2$ with the same properties as before. There is $\kappa \in \D \cap \C_-$ with $\mathfrak{z}(\kappa) = i$ and $\mathfrak{z}(\bar \kappa) = - i$. Composing the maps, we obtain the uniformization $\D / \Gamma \simeq \mathcal{D}_2$ via $\varphi_* \frac{\mathfrak{z} - i}{\mathfrak{z} + i} : \D \to \mathcal{D}_2$.

With the Green function $b_{z_0}$ of the group $\Gamma$ from before, we have
\begin{equation}\label{e.bibminusi}
\mathfrak{v} := \varphi_* \frac{\mathfrak{z} - i}{\mathfrak{z} + i} = e^{i \psi_1} \frac{b_i}{b_{-i}},
\quad b_i(\bar\kappa)>0,\ b_{-i}(\kappa)>0,
\end{equation}
for some $\psi_1 \in [0,2\pi)$. Note that $\mathfrak{v}$ is automorphic, that is, $\mathfrak{v} \circ \gamma = \mathfrak{v}$ for every $\gamma \in \Gamma$. Due to \eqref{e.bibminusi} this implies that the characters of $b_i$ and $b_{-i}$ coincide; we still denote this common character by $\mu_i$.

In order to describe the character $\mu_i$ explicitly, we parametrize the set $E_2$, up to a rotation, using a periodic comb domain. Let $\Pi_2 = \Pi_2 (\{\omega_k, h_k\}_{k=1}^\infty)$,
\begin{equation}\label{e.Thetadefinition2}
\Pi_2 := \{ w \in \C : \Im w > 0 \} \setminus \{ \omega_k + 2 j \pi + i y : 0 < y \le h_k : k \in \Z_+, \; j \in \Z \}
\end{equation}
with frequencies $\{ \omega_k \}_{k \in \Z_+} \subset [0,2\pi)$, $\omega_0 < \omega_1 < \dots < \omega_k < \omega_{k+1} < \dots$, and heights $\{ h_k \}_{k \in \Z_+} \subset (0, \infty)$ such that
\begin{equation}\label{e.discreteomegaks2}
\lim_{k \to \infty} \omega_k =: \omega_*
\end{equation}
and
\begin{equation}\label{e.widomcondition2}
\sum_{k \ge 1} h_k < \infty.
\end{equation}

\begin{lemma}\label{l.periodictheta}
Let $\Pi_2$ be a periodic comb domain as in \eqref{e.Thetadefinition2}. Let $\Theta_2$ be a conformal mapping from $\C_+$ onto $\Pi_2$, which is normalized by
$$
\Theta_2(iy) = iy + \dots, \quad y \to \infty.
$$
Then $\Theta_2$ is $2\pi$ periodic, that is, $\Theta_2 (\psi + 2\pi) = \Theta_2(\psi) + 2 \pi$ for every $\psi \in \C_+$.
\end{lemma}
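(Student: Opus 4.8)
The plan is to transport the translation symmetry of the periodic comb $\Pi_2$ through $\Theta_2$ and use uniqueness of the Riemann map. First I would record that, directly from \eqref{e.Thetadefinition2}, the domain $\Pi_2$ is invariant under the translation $w\mapsto w+2\pi$ (this shift merely permutes the slits via $j\mapsto j+1$ and preserves $\{\Im w>0\}$), and of course $\C_+$ is invariant under $\psi\mapsto\psi+2\pi$. Consequently the map
$$
A:=\Theta_2^{-1}\circ(w\mapsto w+2\pi)\circ\Theta_2
$$
is a composition of the conformal bijection $\Theta_2\colon\C_+\to\Pi_2$, an automorphism of $\Pi_2$, and $\Theta_2^{-1}$, hence an automorphism of $\C_+$, i.e.\ a real M\"obius transformation. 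By construction $\Theta_2\circ A=(w\mapsto w+2\pi)\circ\Theta_2$, so the assertion $\Theta_2(\psi+2\pi)=\Theta_2(\psi)+2\pi$ is equivalent to $A(\psi)=\psi+2\pi$, and the whole proof reduces to identifying $A$.

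To identify $A$ I would look at the behavior at $\infty$. The normalization $\Theta_2(iy)=iy+\dots$ forces $\Theta_2(\psi)\to\infty$ as $\psi\to\infty$ in $\C_+$, so $\Theta_2$ (and likewise $\Theta_2^{-1}$) sends $\infty$ to $\infty$; since $w\mapsto w+2\pi$ fixes $\infty$, the automorphism $A$ fixes $\infty$ and is therefore affine, $A(\psi)=a\psi+b$ with $a>0$ and $b\in\R$. The values of $a$ and $b$ are then read off from the sharp asymptotic expansion
$$
\Theta_2(\psi)=\psi+c+o(1)\qquad(\psi\to\infty\text{ in }\C_+)
$$
for some constant $c$ (this is where the precise meaning of the normalization and the structure of $\Pi_2$ near $\infty$ enter; see below). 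Inverting this gives $\Theta_2^{-1}(w)=w-c+o(1)$, and hence
$$
A(\psi)=\Theta_2^{-1}\bigl(\Theta_2(\psi)+2\pi\bigr)=\Theta_2^{-1}\bigl(\psi+c+2\pi+o(1)\bigr)=\psi+2\pi+o(1).
$$
Comparing with $A(\psi)=a\psi+b$ yields $a=1$ and $b=2\pi$, so $A(\psi)=\psi+2\pi$, and therefore $\Theta_2(\psi+2\pi)=\Theta_2(A(\psi))=\Theta_2(\psi)+2\pi$ for every $\psi\in\C_+$, as claimed.

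The main obstacle is the displayed asymptotic $\Theta_2(\psi)=\psi+c+o(1)$, as opposed to the weaker $\Theta_2(\psi)\sim\psi$: it is precisely the boundedness and convergence of the correction term that distinguishes $b=2\pi$ from an arbitrary real $b$. This is not immediate, because $\infty$ is not a smooth boundary point of $\Pi_2$ --- after the inversion $w\mapsto -1/w$ the slits of $\Pi_2$ far out on the real axis become slits accumulating at the origin, so $\Theta_2$ does not extend conformally across $\infty$. The estimate is instead the standard description of conformal maps onto comb domains: the summability condition \eqref{e.widomcondition2} ($\sum_k h_k<\infty$), which is the Widom condition for $\Pi_2$, is exactly what guarantees that $\Theta_2(\psi)-\psi$ is bounded on $\{\Im\psi>R\}$ for large $R$ and tends to a constant as $\Im\psi\to\infty$; one can prove this by representing $\Theta_2(\psi)-\psi$ through a Poisson-type integral over the slit data and estimating, or simply invoke the classical theory of comb mappings (cf.\ the normalization \eqref{e.Thetanormalization} and the references around it). Granting this input, the rest of the argument above is routine.
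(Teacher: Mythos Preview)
Your reduction to an automorphism of $\C_+$ is exactly the paper's first move: the paper also observes that $\Theta_2^{-1}(\Theta_2(\psi)+2\pi)$ is a M\"obius self-map of $\C_+$ and, from the normalization $\Theta_2(iy)\sim iy$, concludes immediately that it equals $\psi+x_0$ for some real $x_0$. (Note that your $a=1$ already follows from the \emph{leading} asymptotic $\Theta_2(iy)\sim iy$, not from any second-order information: $\Theta_2(a\,iy+b)=\Theta_2(iy)+2\pi$ gives $a\,iy\sim iy$.) The entire content of the lemma is then the identification $x_0=2\pi$, and this is where your argument has a genuine gap.

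You want to conclude $b=2\pi$ from the expansion $\Theta_2(\psi)=\psi+c+o(1)$, and you justify this expansion by invoking the Widom condition \eqref{e.widomcondition2} and ``the classical theory of comb mappings (cf.\ \eqref{e.Thetanormalization})''. Neither applies here. The comb in \eqref{e.Thetanormalization} sits in a quarter-plane and has \emph{finite} total slit height $\sum_k h_k$; the periodic comb $\Pi_2$ has infinitely many translates of every slit, so the total height is infinite and the finite-height comb estimates say nothing. More to the point, from the Herglotz representation
\[
\Theta_2(\psi)-\psi \;=\; B+\frac{1}{\pi}\int_\R\Big(\frac{1}{x-\psi}-\frac{x}{1+x^2}\Big)\rho(x)\,dx,\qquad \rho(x)=\Im\Theta_2(x),
\]
boundedness of $\rho$ (which is all that $\sup_k h_k<\infty$ gives) does \emph{not} force $\Theta_2(iy)-iy$ to converge: a bounded $\rho$ can easily produce a Poisson integral $\frac{1}{\pi}\int\frac{y\rho(x)}{x^2+y^2}\,dx$ that oscillates as $y\to\infty$. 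The expansion $\Theta_2(\psi)=\psi+c+o(1)$ is in fact true, but to prove it one must first use that $\rho$ is periodic (with the as-yet-unknown period $x_0$), and the resulting estimate is essentially the computation the paper carries out. So the step you label ``routine once granted'' is precisely the substance of the proof.

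For comparison, the paper avoids the second-order expansion entirely: from $\Theta_2(\psi+x_0)-\Theta_2(\psi)=2\pi$ and the Herglotz representation it gets
\[
\frac{2\pi}{x_0}-1=\frac{1}{\pi}\int_\R\frac{\rho(x)\,dx}{(x-\psi)(x-x_0-\psi)},
\]
sets $\psi=iy$, and shows the right-hand side tends to $0$ by an elementary estimate using only that $\rho$ is bounded (so only $\sup_k h_k<\infty$ is needed, not the Widom summability). That yields $x_0=2\pi$ directly.
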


\begin{proof}
The conformal mapping $\Theta_2^{-1}(\Theta_2(\psi) + 2\pi)$ makes sense and maps $\C_+$ onto itself. Therefore by the normalization condition, $\Theta_2^{-1} (\Theta_2(\psi) + 2 \pi) = \psi + x_0$. Our claim is $x_0 = 2\pi$.

We use the integral representation for $\Theta_2(\psi)$,
$$
\frac{\Theta_2(\psi + x_0) - \Theta_2(\psi)}{x_0} = 1 + \frac{1}{\pi} \int_{\R} \frac{\rho(x) \, dx}{(x-\psi) (x-(x_0+\psi))},
$$
where $\rho(x) = \Im \Theta_2(x)$. Note that $\rho(x)$ is uniformly bounded. Since $\Theta_2(\psi + x_0) - \Theta_2(\psi) = 2\pi$, we get
$$
\frac{1}{\pi} \int_{\R} \frac{\rho(x) \, dx}{(x-\psi) (x-(x_0+\psi))} = \frac{2\pi}{x_0} - 1.
$$
We put $\psi = iy$ and pass to the limit as $y \to \infty$. Our goal is to show that the limit of the integral is zero.

We separate real and imaginary parts,
$$
\frac{1}{\pi} \int_{\R} \frac{x - x_0 + iy}{(x - x_0)^2 + y^2} \frac{x + iy}{x^2 + y^2} \rho(x) \, dx = I_1 + i I_2,
$$
where
$$
I_1 = \frac{1}{\pi} \int_{\R} \frac{x (x - x_0) - y^2}{((x - x_0)^2 + y^2)(x^2 + y^2)} \rho(x) \, dx
$$
and
$$
I_2 = \frac{1}{\pi} \int_{\R} \frac{y(2x - x_0)}{((x - x_0)^2 + y^2)(x^2 + y^2)} \rho(x) \, dx.
$$
Taking into account $I_2 = 0$, we get
$$
I_1 = \frac{1}{\pi} \int_{\R} \frac{x^2 - x_0^2/2 - y^2}{((x - x_0)^2 + y^2)(x^2 + y^2)} \rho(x) \, dx = \frac{2\pi}{x_0} - 1.
$$
Since $x^2 \le x^2+ y^2$ and $y^2 \le x^2 + y^2$, we have
$$
0 \le \frac{1}{\pi} \int_{\R} \frac{x^2}{((x - x_0)^2 + y^2)(x^2 + y^2)} \rho(x) \, dx \le \frac{1}{\pi} \int_{\R} \frac{\rho(x) \, dx}{(x - x_0)^2 + y^2}
$$
and
$$
0 \le \frac{1}{\pi} \int_{\R} \frac{y^2}{((x - x_0)^2 + y^2)(x^2 + y^2)} \rho(x) \, dx \le \frac{1}{\pi} \int_{\R} \frac{\rho(x) \, dx}{(x - x_0)^2 + y^2}.
$$
In the last integral we can pass to the limit, say due to the Lebesgue theorem,
$$
\lim_{y \to \infty} \frac{1}{\pi} \int_{\R} \frac{\rho(x) \, dx}{(x - x_0)^2 + y^2} = \frac{1}{\pi} \int_{\R} \lim_{y \to \infty} \frac{\rho(x) \, dx}{(x - x_0)^2 + y^2} = 0.
$$
Thus, $2\pi/x_0 - 1 = 0$.
\end{proof}

Note that $\Theta_2$ in Lemma~\ref{l.periodictheta} is uniquely defined only modulo an additive real constant. We fix a unique such function by requiring in addition that $\Theta_2(0) = 0$. Subject to this condition, the comb domain $\Pi_2$ and the map $\Theta_2$ are in 1-1 correspondence. We can define a set $E_2$ of the form \eqref{e.e2def} as follows. The definition will depend on another parameter $e^{i\psi_0} \in \T$. Let
$$
E_2 = E_2(\Pi_2,e^{i\psi_0}) = \{ e^{i(\psi + \psi_0)} : \psi \in \Theta_2^{-1}(\R) \}.
$$

The fact that $\Theta_2$ is $2\pi$-periodic allows us to define a function $f$ on $\bar \D$ by
$$
f(\varphi) = f \big( e^{i(\psi + \psi_0)} \big) = e^{i \Theta_2(\psi)}.
$$
We claim that this function can be extended analytically as a multi-valued function in $\mathcal{D}$. Moreover, this function has a simple relation to our complex Green functions $b_i$ and  $b_{-i}$.

\begin{lemma}
$f$ can be extended to $\mathcal{D}$ as a multi-valued function. In other words, $f \circ \mathfrak{v}$ is a character-automorphic function on $\D$. Moreover,
$$
f \circ \mathfrak{v} = \frac{b_i b_{-i}}{b_i(t_0) b_{-i}(t_0)},
$$
where $\mathfrak{v}(t_0) = e^{i \psi_0}$.
\end{lemma}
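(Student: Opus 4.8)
The plan is to recognize $f\circ\mathfrak v$ as the character-automorphic Blaschke product whose zero divisor is exactly $\mathrm{orb}(\kappa)+\mathrm{orb}(\bar\kappa)$, normalized so as to equal $1$ at $t_0$; that product is precisely $b_ib_{-i}/(b_i(t_0)b_{-i}(t_0))$, since $b_i$ and $b_{-i}$ are the Blaschke products with zero sets $\mathrm{orb}(\kappa)=\mathfrak z^{-1}(i)$ and $\mathrm{orb}(\bar\kappa)=\mathfrak z^{-1}(-i)$.

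First I would examine $f$ near $\T$. Since $\Theta_2$ maps $\C_+$ into $\Pi_2\subset\{\Im w>0\}$, the function $f(\varphi)=e^{i\Theta_2(\psi)}$ (with $\varphi=e^{i(\psi+\psi_0)}$) is analytic in $\D$ with $|f|\le1$, and $|f|=1$ exactly on $E_2$, where $\Theta_2$ is real. On the arc of $\T$ forming the $k$-th gap of $E_2$ the values $\Theta_2(\psi)$ sweep the segment $\{\omega_k+iy:0\le y\le h_k\}$, so $f$ takes its values on the line $e^{i\omega_k}\R$ through the origin; hence $f$ continues analytically across that gap by the Schwarz reflection principle ($\varphi\mapsto 1/\bar\varphi$ on the source, $w\mapsto e^{2i\omega_k}\bar w$ on the target). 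Performing all these reflections, the monodromy around a generator of $\Gamma_{\mathcal D_2}$ is a composition of two reflections in lines through the origin, hence a rotation; therefore $f$ becomes single-valued on the universal cover, i.e. $f\circ\mathfrak v$ is analytic on $\D$ and character-automorphic. Since the reflections preserve $|f|\le1$ and $\mathfrak v(\T)=E_2$, the function $f\circ\mathfrak v$ is inner.

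Next I would locate the zeros. As $f=e^{i\Theta_2}$ has no poles and no zeros for finite $\Theta_2$, its only zeros on $\mathcal D_2$ sit where a branch of $\Theta_2$ runs off to $i\infty$. By the normalization $\Theta_2(iy)=iy+\cdots$ and $2\pi$-periodicity, $\Theta_2(\psi)=\psi+O(1)$ as $\Im\psi\to+\infty$, whereas the branch obtained by reflecting through the $k$-th gap, $2\omega_k-\overline{\Theta_2(\bar\psi)}$, behaves like $2\omega_k-\psi+O(1)$ as $\Im\psi\to-\infty$. Translating via $\varphi=e^{i(\psi+\psi_0)}$, this yields a simple zero of $f$ at $\varphi=0$ (where $\psi\to i\infty$) and a simple zero at $\varphi=\infty$ (where $\psi\to-i\infty$), and no others. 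Since $\mathfrak v^{-1}(0)=\mathfrak z^{-1}(i)=\mathrm{orb}(\kappa)$ and $\mathfrak v^{-1}(\infty)=\mathfrak z^{-1}(-i)=\mathrm{orb}(\bar\kappa)$, the zero divisor of $f\circ\mathfrak v$ is $\mathrm{orb}(\kappa)+\mathrm{orb}(\bar\kappa)$, with all zeros simple. Thus $f\circ\mathfrak v=c\,b_ib_{-i}\,S$ for some $c\in\T$ and some character-automorphic singular inner function $S$, and it remains to show that $S$ is trivial.

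This last point is the main obstacle, and I would handle it by pushing $-\log|S|$ to the boundary. On $\mathcal D_2$ one has $-\log|S|=-\log|f|-G_0-G_\infty$, where $-\log|f|=\Im\Theta_2(\psi)$ and $G_0,G_\infty$ are the Green functions of $\mathcal D_2$ with poles at $0$ and $\infty$ (subtracted off to cancel the logarithmic poles of $-\log|f|$ there), so $-\log|S|$ is a nonnegative harmonic function on all of $\mathcal D_2$. At every boundary point both $G_0$ and $G_\infty$ vanish continuously (the points of $E_2$ lie on nondegenerate arcs and are regular), and $\Im\Theta_2$ extends continuously with boundary value $0$ (there $\Theta_2$ is real), in particular at the single accumulation point of the gaps, where the Widom condition $\sum_k h_k<\infty$ supplies the needed continuity of $\Theta_2$ with a real value. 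Hence $-\log|S|$ extends continuously to $\overline{\mathcal D_2}$ with boundary value $0$, so by the maximum principle $-\log|S|\equiv0$; absorbing the unimodular constant $S$ into $c$ gives $f\circ\mathfrak v=c\,b_ib_{-i}$. (Alternatively, one argues exactly as at the end of the proof of Proposition~\ref{p.sellformula}: a nontrivial $S$ would be a Martin-function exponential based at the accumulation point of the gaps, which is incompatible with $-\log|f|\to0$ there while the Martin function blows up.) Finally, evaluating at $t_0$, where $\mathfrak v(t_0)=e^{i\psi_0}$ corresponds to $\psi=0$, gives $f\circ\mathfrak v(t_0)=e^{i\Theta_2(0)}=1$, whence $c=1/(b_i(t_0)b_{-i}(t_0))$ and the asserted formula follows.
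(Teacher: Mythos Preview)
Your proof is correct and follows the same approach as the paper's: extend $f$ by Schwarz reflection across the gaps of $E_2$, identify the zeros of $f\circ\mathfrak v$ as the orbits $\mathfrak z^{-1}(i)\cup\mathfrak z^{-1}(-i)$, and fix the unimodular constant by evaluating at $t_0$. You are more careful than the paper in explicitly ruling out a singular inner factor via the boundary/maximum-principle argument (using that $\Im\Theta_2$ and the Green functions vanish continuously on $E_2$, including at the accumulation point of the gaps thanks to \eqref{e.widomcondition2}); the paper's brief proof simply passes directly from ``$|f\circ\mathfrak v|=1$ on $\T$ with the stated zeros'' to $f\circ\mathfrak v=b_ib_{-i}e^{i\psi_2}$ without isolating this step.
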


\begin{proof}
$f$ can be extended analytically through the gap $(\varphi_0^-,\varphi_0^+)$ of $E_2$ by the symmetry principle since $\Theta_2 + \bar \Theta_2 = 2 \omega_0$ on the image of this gap. Note that by definition, $f$ has a simple zero at $0$. Therefore, it also has a simple zero at $\infty$. Moreover, $f$ has no other zeros and we have $|f \circ \mathfrak{v}| = 1$ on $\T$. This shows that $f \circ \mathfrak{v} = b_i b_{-i} e^{i \psi_2}$. To determine the unimodular constant $e^{i \psi_2}$, we note that for $t_0$ with $\mathfrak{v}(t_0) = e^{i \psi_0}$, we have
$$
b_i(t_0) b_{-i}(t_0) e^{i \psi_2} = f(e^{i \psi_0}) = e^{i \Theta_2(0)} = 1
$$
by our normalization condition. This completes the proof.
\end{proof}

\begin{coro}
We have $\mu_i(\gamma_k) = e^{i(\omega_k - \omega_0)}$.
\end{coro}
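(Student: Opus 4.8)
We indicate the plan. The corollary is meant to be read off from the preceding lemma together with the reflection‑principle bookkeeping already carried out to obtain \eqref{e.chielldef}.

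\emph{Step 1 (produce $\mu_i^2$).} By the preceding lemma, $f\circ\mathfrak v = b_i b_{-i}/\bigl(b_i(t_0)b_{-i}(t_0)\bigr)$. Since $b_i$ and $b_{-i}$ transform under $\Gamma$ with one and the same character $\mu_i$ (this is exactly the content of the paragraph around \eqref{e.bibminusi}), the character of the character‑automorphic function $f\circ\mathfrak v$ equals $\mu_i^2$.

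\emph{Step 2 (compute that character via reflection).} I would compute the same character directly from $f\bigl(e^{i(\psi+\psi_0)}\bigr)=e^{i\Theta_2(\psi)}$, mimicking the derivation of \eqref{e.chielldef}. Transporting the generator $\gamma_k$ of $\Gamma$ to the corresponding element of $\Gamma_{\mathcal D_2}$, it is a loop obtained from the reference gap $(\varphi_0^-,\varphi_0^+)$ (the one containing $\varphi_*$) and the $k$‑th gap $(\varphi_k^-,\varphi_k^+)$ by two applications of the Schwarz reflection principle. On the image under $\Theta_2$ of the $j$‑th gap one has $\Theta_2+\overline{\Theta_2}=2\omega_j$, so reflecting first across the reference gap ($\Theta_2\mapsto 2\omega_0-\overline{\Theta_2}$) and then across the $k$‑th gap ($\Theta_2\mapsto 2\omega_k-\overline{\Theta_2}$) produces $\Theta_2\mapsto\Theta_2+2(\omega_k-\omega_0)$. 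Hence $(f\circ\mathfrak v)\circ\gamma_k=e^{2i(\omega_k-\omega_0)}(f\circ\mathfrak v)$, i.e.\ the character of $f\circ\mathfrak v$ sends $\gamma_k$ to $e^{2i(\omega_k-\omega_0)}$. Combining with Step 1 gives $\mu_i(\gamma_k)^2=e^{2i(\omega_k-\omega_0)}$, so $\mu_i(\gamma_k)=\pm e^{i(\omega_k-\omega_0)}$.

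\emph{Step 3 (fix the sign).} To rule out the minus sign I would use the harmonic‑measure representation of the character of a Green function recorded before \eqref{e.bdefinition}, which here reads $\mu_i(\gamma_k)=e^{2\pi i\,\omega(i,\tilde E_k,\tilde{\mathcal D}_2)}$. Since $z=i$ corresponds to $\varphi=0$ and, in the $w=\Theta_2$ picture, to $w=i\infty$, while $\tilde E_k$ is precisely the portion of $\tilde E_2$ lying between the reference gap and the $k$‑th gap, one transports this harmonic measure through the comb uniformization. The periodic comb $\Pi_2$ is built so that the $k$‑th slit sits on the line $\Re w=\omega_k$; using that the reflection symmetry of $\tilde{\mathcal D}_2$ across $\R$ turns $\omega(\cdot,\tilde E_k,\tilde{\mathcal D}_2)$ into a mixed Dirichlet--Neumann datum (Dirichlet on $\tilde E_2$, Neumann on the gaps, hence Neumann on the slits after transport), and evaluating at $w=i\infty$ (a constant that is insensitive to the bounded slits, up to the standard disc computation via $w\mapsto e^{iw}$), one arrives at $\omega(i,\tilde E_k,\tilde{\mathcal D}_2)=\tfrac{\omega_k-\omega_0}{2\pi}\in(0,1)$, which together with Step 2 forces $\mu_i(\gamma_k)=e^{i(\omega_k-\omega_0)}$.

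Steps 1 and 2 are routine and just repackage the reflection‑principle argument behind \eqref{e.chielldef}. The step I expect to be the main obstacle is Step 3: the squaring in Steps 1--2 determines $\mu_i(\gamma_k)$ only up to a sign (equivalently, determines the harmonic measure only modulo $\tfrac12$), so pinning it down honestly requires controlling the multivalued function $\Re\Theta_2$ on $\mathcal D_2$ and carefully computing the harmonic measure of $\tilde E_k$ in the periodic slit domain uniformized by $\Theta_2$.
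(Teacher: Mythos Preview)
Your Steps 1 and 2 are exactly the paper's proof: the paper fixes the contour $(\gamma_{\mathcal D})_k$, extends $f$ along it via the reflection principle to get $(f\circ\mathfrak v)\circ\gamma_k=e^{2i(\omega_k-\omega_0)}(f\circ\mathfrak v)$, and then observes that by the preceding lemma this equals $\mu_i(\gamma_k)^2$. The paper's proof stops right there; it does not address the sign ambiguity you raise in Step~3.

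So you have correctly reproduced the paper's argument and, in addition, correctly identified that the squaring only pins down $\mu_i(\gamma_k)$ up to sign. Your Step~3 plan (reading off the harmonic measure $\omega(i,\tilde E_k,\tilde{\mathcal D}_2)$ via the comb picture and the map $w\mapsto e^{iw}$) is a reasonable route to fix the sign, though as you yourself note it needs a careful computation to be made rigorous. For the purposes of the paper the distinction is immaterial---what is actually used downstream is the translation by $(\mu_i,e^{i\psi_1})$ on $\Gamma^*\times\T$ and the independence condition \eqref{e.frequenciesindependent2}, and a global sign flip on each $\mu_i(\gamma_k)$ (i.e.\ replacing $\omega_k-\omega_0$ by $\omega_k-\omega_0+\pi$) does not affect minimality or unique ergodicity of that translation---but you are right that, as a standalone statement, the corollary as written requires the extra work you outline.
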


\begin{proof}
Fix a contour $(\gamma_{\mathcal{D}})_k$ corresponding to the generator $\gamma_k$ of the group $\Gamma$. We extend $f$ analytically along this contour. Due to the reflection property, we obtain
$$
(f \circ \mathfrak{v})(\gamma_k) = e^{2i(\omega_k - \omega_0)}.
$$
On the other hand, by the properties of the functions $b_i$, $b_{-i}$ this is $\mu_i(\gamma_k)^2$.
\end{proof}

To formulate our main result we need one more unimodular constant. From the representation \eqref{e.bibminusi} we get
\begin{equation}\label{e.psionedef}
e^{i \psi_1} = e^{i \psi_0} \frac{b_{-i}(t_0)}{b_i(t_0)}.
\end{equation}

\begin{theorem}\label{t.maincmv}
Consider $e^{i \psi_0} \in \T$, a periodic comb domain $\Pi_2 = \Pi_2(\{\omega_k,h_k\}_{k=1}^\infty)$ as in \eqref{e.Thetadefinition2} satisfying \eqref{e.discreteomegaks2} and \eqref{e.widomcondition2}, and the associated set $E_2 = E_2(\Pi_2,e^{i \psi_0}) \subset \T$. With $e^{i\psi_1} \in \T$ from \eqref{e.psionedef}, we assume in addition that
\begin{align}\label{e.frequenciesindependent2}
& \{ n_k \}_{k \in \Z_+} \subseteq \Z, \; \exists k_0 : \forall k \ge k_0, \; n_k = 0, \; \text{and} \\
\nonumber & \exp \Big( i \Big( n_0 \psi_1 + \sum_{k \ge 1} n_k (\omega_k - \omega_0 \Big) \Big) = 1\Rightarrow \forall k \in \Z_+, \; n_k = 0 .
\end{align}
Then we have the following dichotomy:
\begin{itemize}

\item[{\rm (a)}] If $E_2$ satisfies DCT, then every $\mathcal{E} \in \mathcal{E}(E_2)$ is almost periodic.

\item[{\rm (b)}] If $E_2$ does not satisfy DCT, then every $\mathcal{E} \in \mathcal{E}(E_2)$ is not almost periodic.

\end{itemize}
\end{theorem}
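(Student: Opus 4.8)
The plan is to transpose the proof of Theorem~\ref{t.main} to the extended CMV setting, ingredient by ingredient. The first ingredient is the analog of the chain of bijections \eqref{e.correspondence}, namely
$$
\mathcal{E}(E_2) \leftrightarrow J(\tilde E_2) \leftrightarrow \{ H^2(\alpha) : \alpha \in \Gamma^* \},
$$
where the middle term is a space of reflectionless two-sided Jacobi matrices on a set $\tilde E_2 \subset \R$ derived from $E_2$. Concretely, using the change of variables \eqref{e.efromvtoz} and a Szeg\H{o}-type correspondence between reflectionless extended CMV matrices on $E_2 \subset \T$ and reflectionless Jacobi matrices on the M\"obius image of $E_2$ (carrying along one scalar parameter that records the phase $\psi_0$), one reduces the inverse-spectral bookkeeping to the machinery of \cite{VY14}, which applies to the uniformization $\D/\Gamma \simeq \tilde{\mathcal{D}}_2$ exactly as in Section~\ref{s.3}. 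Alternatively one can bypass the Jacobi intermediary and build the generalized Abel map $\pi : \mathcal{E}(E_2) \to \Gamma^*$ directly from the Schur functions $s_\pm$: the reflectionlessness relation $\overline{\varphi s_+(\varphi)} = s_-(\varphi)$ on $E_2$ lifts, under $\mathfrak{z}$, to a factorization of a reproducing kernel of the form \eqref{e.h2alpharepkernelformula}, and $\pi(\mathcal{E})$ is the character of the resulting function $e_0$. In either route the DCT dichotomy \eqref{e.dctcharacterization} and the smallness \eqref{e.aentf} of $\mathcal{NTF}$ carry over verbatim, since they are statements about the domain $\tilde{\mathcal{D}}_2$ alone.

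The second ingredient is the identification of the shift. The one-step shift $T$ on $\mathcal{E}(E_2)$, $\{\upsilon_n\} \mapsto \{\upsilon_{n+1}\}$, should push forward under $\pi$ to translation by a fixed character: $\pi(T\mathcal{E}) = \chi^{-1}\pi(\mathcal{E})$, where $\chi \in \Gamma^*$ is the character of the generalized inner multiplier associated with one step of the Szeg\H{o} recursion. The crucial point is the analog of Proposition~\ref{p.sellformula}: by the reflection/symmetry argument used there---now powered by Lemma~\ref{l.periodictheta} and the structural identities established above for $\mathfrak{v}$, $b_i$, $b_{-i}$---one shows that this multiplier is (a normalized power of) $\mathfrak{v} = e^{i\psi_1} b_i/b_{-i}$, so that $\chi$ has the values $e^{i\psi_1}$ in the $0$-th coordinate and $e^{i(\omega_k-\omega_0)}$, $k\ge 1$, in the remaining coordinates of $\Gamma^* \cong \T^{\Z_+}$ (the values on the generators $\gamma_k$ being exactly those furnished by the Corollary above). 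Consequently the orbit $\{T^n\mathcal{E}\}_{n\in\Z}$ corresponds under $\pi$ to the translation orbit $\{\chi^{-n}\alpha\}_{n\in\Z}$, whose closure is all of $\Gamma^*$ precisely when every nontrivial character of $\Gamma^*$ is nontrivial on $\chi$---which is exactly hypothesis \eqref{e.frequenciesindependent2}. Thus, under \eqref{e.frequenciesindependent2}, the induced translation is minimal and uniquely ergodic with respect to Haar measure on $\Gamma^*$; this is the CMV analog of Proposition~\ref{p.flowrelation}.

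With these two ingredients, part (a) is immediate: if $E_2$ satisfies DCT then $\mathcal{NTF} = \emptyset$, so $\pi$ is a homeomorphism (as in Remark~\ref{r.topologies} and \cite{SY95}) conjugating the shift on $\mathcal{E}(E_2)$ to a minimal translation on the compact abelian group $\Gamma^*$; hence every $\mathcal{E} \in \mathcal{E}(E_2)$ has almost periodic Verblunsky coefficients. Part (b) is the three-step argument from the proof of Theorem~\ref{t.main}(b), with $q(E)$ replaced by $\mathcal{E}(E_2)$: (i) for $\alpha \in \mathcal{NTF}$ the two distinct matrices $\mathcal{E}(\hat H^2(\alpha)) \ne \mathcal{E}(\check H^2(\alpha))$ cannot both be almost periodic, since one can steer $\mathcal{S}_{n_j}\alpha$ to some $\beta \notin \mathcal{NTF}$, forcing both translated coefficient sequences to converge to the unique $\mathcal{E}_\beta$ and contradicting the shift-invariance of $\|\mathcal{E}(\hat H^2(\alpha)) - \mathcal{E}(\check H^2(\alpha))\|_\infty > 0$; (ii) unique ergodicity, together with the fact that the natural invariant measure on $\mathcal{E}(E_2)$ charges every nonempty open set (the CMV analog of Remark~\ref{r.measures}, via \cite[Section~7]{VY14}), upgrades this to ``almost every $\mathcal{E}$ is not almost periodic''; (iii) density of $\{\mathcal{S}_n\alpha\}$ and full Haar measure of $\pi(V) \cap (\Gamma^*\setminus\mathcal{NTF})$ force the orbit of any hypothetically almost periodic $\mathcal{E}$ to accumulate at a matrix already known not to be almost periodic, a contradiction. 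Finally, Theorem~\ref{tecmvklcfails} follows by exhibiting a periodic comb $\Pi_2$ for which $E_2 = E_2(\Pi_2,e^{i\psi_0})$ is a Widom set failing DCT---conformal images of the examples in \cite[Proposition~1.10]{VY14}---with $\psi_0$ and the $\omega_k$ chosen so that \eqref{e.frequenciesindependent2} holds.

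The main obstacle is the first ingredient: verifying that the circle/CMV versions of the structural results of \cite{VY14}---the bijection with intermediate character-automorphic Hardy spaces, the reproducing-kernel identity, and especially the inner-multiplier computation pinning $\chi$ down in terms of $\psi_1$ and the $\omega_k$---go through with the periodic-comb parametrization $E_2 = E_2(\Pi_2,e^{i\psi_0})$ introduced above. Once the multiplier is identified, the flow identification and the entire dichotomy are a transcription of the Schr\"odinger argument.
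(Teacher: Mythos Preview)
Your overall strategy---transpose the proof of Theorem~\ref{t.main} to the CMV setting---is exactly what the paper does, and your steps (b)(i)--(iii) are a faithful transcription. But your first ingredient contains a genuine structural error that breaks the argument as written.

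The target of the generalized Abel map is $\Gamma^* \times \T$, not $\Gamma^*$. The Fuchsian group $\Gamma$ for $\tilde{\mathcal D}_2$ has generators $\gamma_k$ indexed by $k\ge 1$ (the non-base gaps), so $\Gamma^*\cong\T^{\{k\ge 1\}}$; there is no ``$0$-th coordinate'' of $\Gamma^*$ on which $e^{i\psi_1}$ could live. Correspondingly, $D(E_2)\cong\T^{\Z_+}$ has one more circle than $J(\tilde E_2)\cong D(\tilde E_2)\cong\T^{\{k\ge 1\}}$, because the M\"obius map \eqref{e.efromvtoz} sends the $0$-th gap (which contains $\varphi_*$) to the unbounded complement of $\tilde E_2$, so that gap contributes no divisor coordinate on the Jacobi side. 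Hence your proposed chain $\mathcal{E}(E_2)\leftrightarrow J(\tilde E_2)\leftrightarrow\{H^2(\alpha)\}$ \emph{cannot} consist of bijections, and the ``scalar parameter'' you propose to carry along is not the fixed datum $\psi_0$ (which defines $E_2$ and does not vary over $\mathcal{E}(E_2)$) but an honest extra $\T$-coordinate in the parametrization. This is exactly why the independence hypothesis \eqref{e.frequenciesindependent2} involves $n_0\psi_1$ in addition to the $n_k(\omega_k-\omega_0)$: it is the minimality condition for translation by $(\mu_i,e^{i\psi_1})$ on $\Gamma^*\times\T$, not for translation by a character on $\Gamma^*$ alone.

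The paper sidesteps your Jacobi intermediary entirely and invokes Peherstorfer--Yuditskii \cite{PY06}: there is a bijection $\{H^2(\alpha)\}\times\T\leftrightarrow D(E_2)\cong\mathcal{E}(E_2)$, yielding $\pi_2:\mathcal{E}(E_2)\to\Gamma^*\times\T$, and the shift pushes forward to translation by $(\mu_i,e^{i\psi_1})$. With that correction your parts (a) and (b) go through verbatim, replacing $\Gamma^*$ by $\Gamma^*\times\T$ throughout.
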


The proof of Theorem~\ref{t.maincmv} goes along the same lines as the proof of Theorem~\ref{t.main}. The basis for the proof is the following result by Peherstorfer and Yuditskii from \cite{PY06}.

\begin{theorem}[a reformulation of Theorem~1.5 in  \cite{PY06}]
There is a bijection between $\{ H^2(\alpha) \} \times \T$ and $D(E_2)$. Since the latter is homeomorphic to $\mathcal{E}(E_2)$, this gives rise to a continuous generalized Abel map $\pi_2 : \mathcal{E}(E_2) \to \Gamma^* \times \T$. The push-forward of the shift operation on the sequence of Verblunsky coefficients of a matrix in $\mathcal{E}(E_2)$ by the generalized Abel map is the translation by $(\mu_i, e^{i \psi_1})$.
\end{theorem}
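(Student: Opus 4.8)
This is, as the parenthetical title indicates, essentially a transcription of \cite[Theorem~1.5]{PY06} into the framework set up above, so the plan is to recall the relevant pieces of that construction, reconcile the normalizations, and then read off the two structure constants from the lemmas just established in this appendix.

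First I would set up the coordinatization of $\mathcal{E}(E_2)$. Since $E_2$ has the form \eqref{e.e2def} and the Widom condition \eqref{e.widomcondition2} holds, the CMV analog of the Poltoratski--Remling argument (cf.\ \cite{PR09} and the corresponding step in \cite{VY14}) guarantees that every reflectionless measure on $E_2$ is purely absolutely continuous; hence, as already noted above, $\mathcal{E}(E_2)$ is homeomorphic to the divisor torus $D(E_2) = \prod_{k \in \Z_+} \T$. On the other hand, \cite[Theorem~1.5]{PY06} identifies $D(E_2)$ with $\{H^2(\alpha)\} \times \T$: the character-automorphic Hardy space $H^2(\alpha)$, built on the uniformization $\D/\Gamma \simeq \tilde{\mathcal{D}}_2$ exactly as in Section~\ref{s.3}, records the divisor data carried by the gaps $(\varphi_k^-,\varphi_k^+)$ with $k \ge 1$ (one generator $\gamma_k$ of $\Gamma$ per such gap), while the extra coordinate in $\T$ records the divisor on the distinguished gap $(\varphi_0^-,\varphi_0^+)$ containing $\varphi_*$ --- equivalently, the value at a base point of a Schur function attached to $\mathcal{E}$. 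Here one must check that the normalizations used in \cite{PY06} coincide with those fixed above, namely $\mathfrak{z}(0) = \infty$, $(\zeta\mathfrak{z})(0) > 0$, $\mathfrak{z}(\kappa) = i$, together with $b_i(\bar\kappa) > 0$ and $b_{-i}(\kappa) > 0$ from \eqref{e.bibminusi}; this is routine bookkeeping. Composing the homeomorphism $\mathcal{E}(E_2) \simeq D(E_2)$ with the bijection $D(E_2) \simeq \{H^2(\alpha)\} \times \T$ and then with $H^2(\alpha) \mapsto \alpha$ produces the generalized Abel map $\pi_2 : \mathcal{E}(E_2) \to \Gamma^* \times \T$; its continuity is inherited from continuity of each constituent map, in complete parallel with Remark~\ref{r.topologies}.

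Second I would compute the push-forward of the shift. Shifting the Verblunsky sequence $\{\upsilon_n\}$ by one corresponds, after passing to the spectral representation of $\mathcal{E}$ and uniformizing, to multiplication by the independent variable $\varphi$ --- that is, by $\mathfrak{v} = \varphi_*\frac{\mathfrak{z}-i}{\mathfrak{z}+i}$ --- on the relevant character-automorphic functions, accompanied by the induced re-indexing of the chain of reproducing kernels attached to $H^2(\alpha)$; this is precisely the shift computation carried out in \cite{PY06}, and the $2$-periodic half-step structure intrinsic to CMV matrices has to be tracked at this stage. By the factorization \eqref{e.bibminusi}, $\mathfrak{v} = e^{i\psi_1}\, b_i / b_{-i}$, where $b_i$ and $b_{-i}$ share the common character $\mu_i$ and the residual unimodular constant is $e^{i\psi_1}$. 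Feeding this factorization through the coordinatization of the previous paragraph shows that $\pi_2$ intertwines the Verblunsky shift with the translation by $(\mu_i, e^{i\psi_1})$ on $\Gamma^* \times \T$. The explicit value $\mu_i(\gamma_k) = e^{i(\omega_k - \omega_0)}$ obtained in the Corollary above, together with the formula \eqref{e.psionedef} for $e^{i\psi_1}$, then render this translation completely explicit in terms of the periodic comb data, which is exactly what the hypothesis \eqref{e.frequenciesindependent2} and the subsequent proof of Theorem~\ref{t.maincmv} require.

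I expect the main obstacle to be not any isolated deep step but rather the careful dictionary between \cite{PY06} and the present formulation: verifying that the base points, normalizations, and orientation conventions agree; keeping precise track of which character-automorphic function implements the one-step shift, and of the $2$-periodic half-step structure of CMV matrices; and checking that the phase and character bookkeeping produce precisely the translation by $(\mu_i, e^{i\psi_1})$, with the correct signs. A secondary point demanding care is the verification that every reflectionless measure on $E_2$ is absolutely continuous in the present setting, since the divisor parametrization of $\mathcal{E}(E_2)$ --- and hence the whole construction --- rests on it.
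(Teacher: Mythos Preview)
Your proposal is correct and aligns with the paper's own treatment: the paper does not give an independent proof but simply explains that the statement is extracted from \cite[Theorem~1.5]{PY06}, noting that the homogeneity hypothesis there can be dropped in favor of the $\{H^2(\alpha)\}$-parametrization and that the second translation coordinate is identified as $e^{i\psi_1}$ via the preceding lemmas in the appendix. Your sketch is a more fleshed-out version of exactly this extraction, with the same ingredients (the factorization \eqref{e.bibminusi}, the Corollary computing $\mu_i(\gamma_k)$, and the normalization bookkeeping).
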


The theorem is not stated in this form in \cite{PY06}, but it may be extracted from that paper. First of all, \cite[Theorem~1.5]{PY06} assumes that $E_2$ is homogeneous, but this assumption is actually not necessary for the modified statement given above. Homogeneity implies that, in our notation, $\mathcal{NTF} = \emptyset$, and hence one gets a bijection between $\Gamma^* \times \T$ and $\mathcal{E}(E_2)$ in this case (\cite[Theorem~1.5]{PY06} is formulated in such a way that $\Gamma^* \times \T$ is mapped onto $\mathcal{E}(E_2)$). Of course, one does not have such a bijection when $\mathcal{NTF} \not= \emptyset$. Secondly, the second component of $(\mu_i, e^{i \psi_1})$ is not described explicitly there but it may be identified as $e^{i \psi_1}$ given our discussion above, see also  the second remark in \cite[Section~4.3]{PY06}.

Once Theorem~\ref{t.maincmv} is obtained, one shows that the second alternative is possible and hence derives Theorem~\ref{tecmvklcfails} in the same way Corollary~\ref{c.main} was derived above.

Concerning an explicit formula for the Verblunsky coefficients in terms of the reproducing kernels $k^\alpha(\zeta,\kappa)$ and $k^\alpha(\zeta,\bar \kappa)$ and the related orthonormal basis in $H^2(\alpha)$, see Theorems~4.1 and 4.4 in \cite{PVY09}.

\end{appendix}

\section*{Acknowledgment}

D.\ D.\ would like to express his gratitude to the Institut f\"ur Analysis, Abteilung f\"ur Dynamische Systeme und Approximationstheorie at the Johannes Kepler University, Linz for the hospitality during a visit in the spring of 2014 where this work was done, and for financial support through the Austrian Science Fund FWF, project no: P22025-N18.


\begin{thebibliography}{00}

\bibitem{AD08} D.\ Arov, H.\ Dym, \textit{$J$-Contractive Matrix Valued Functions and Related Topics}, Encyclopedia of Mathematics and its Applications \textbf{116}, Cambridge University Press, Cambridge, 2008.

\bibitem{A14} A.\ Avila, On the Kotani-Last and Schr\"odinger conjectures, to appear in \textit{J.\ Amer.\ Math.\ Soc.}\  (arXiv:1210.6325).

\bibitem{AD05} A.\ Avila, D.\ Damanik, Generic singular spectrum for ergodic Schr\"odinger operators, \textit{Duke Math.\ J.}\ \textbf{130} (2005), 393--400.

\bibitem{AS81} J.\ Avron, B.\ Simon, Almost periodic Schr\"odinger operators. I.~Limit periodic potentials, \textit{Commun.\ Math.\ Phys.}\ \textbf{82} (1981), 101--120.

\bibitem{C81} V.\ Chulaevskii, Perturbations of a Schr\"odinger operator with periodic potential, \textit{Uspekhi Mat.\ Nauk} \textbf{36} (1981), 203--204.

\bibitem{C89} W.\ Craig, The trace formula for Schr\"odinger operators on the line, \textit{Commun.\ Math.\ Phys.}\ \textbf{126} (1989), 379--407.

\bibitem{D07} D.\ Damanik, Lyapunov exponents and spectral analysis of ergodic Schr\"odinger operators: a survey of Kotani theory and its applications, in \textit{Spectral Theory and Mathematical Physics: a Festschrift in Honor of Barry Simon's 60th Birthday}, 539--563, Proc.\ Sympos.\ Pure Math.\ \textbf{76}, Part 2, Amer. Math. Soc., Providence, RI, 2007.

\bibitem{DG14} D.\ Damanik, M.\ Goldstein, On the inverse spectral problem for the quasi-periodic Schr\"odinger equation, \textit{Publ.\ Math.\ Inst.\ Hautes \'Etudes Sci.}\ \textbf{119} (2014), 217--401.

\bibitem{DK05} D.\ Damanik, R.\ Killip, Ergodic potentials with a discontinuous sampling function are non-deterministic, \textit{Math.\ Res.\ Lett.}\ \textbf{12} (2005), 187--192.

\bibitem{DS75} E.\ Dinaburg, Ya.\ Sinai, The one-dimensional Schr\"odinger equation with quasiperiodic potential, \textit{Functional Anal.\ Appl.}\ \textbf{9} (1975), 279--289.

\bibitem{E92} L.\ H.\ Eliasson, Floquet solutions for the $1$-dimensional quasi-periodic Schr\"odinger equation, \textit{Commun.\ Math.\ Phys.}\ \textbf{146} (1992), 447--482.

\bibitem{G93} J.\ Geronimo, Polynomials orthogonal on the unit circle with random recurrence coefficients, \textit{Methods of Approximation Theory in Complex Analysis and Mathematical Physics} (\textit{Leningrad, 1991}), 43--61, Lecture Notes in Math.\ \textbf{1550}, Springer, Berlin, 1993.

\bibitem{GT94} J.\ Geronimo, A.\ Teplyaev, A difference equation arising from the trigonometric moment problem having random reflection coefficients -- an operator-theoretic approach, \textit{J.\ Funct.\ Anal.}\ \textbf{123} (1994), 12--45.

\bibitem{H83} M.\ Hasumi, \textit{Hardy Classes on Infinitely Connected Riemann Surfaces}, Lecture Notes in Math.\ \textbf{1027}, Springer-Verlag, Berlin, 1983.

\bibitem{J07} S.\ Jitomirskaya, Ergodic Schr\"odinger operators (on one foot), in \textit{Spectral Theory and Mathematical Physics: a Festschrift in Honor of Barry Simon's 60th Birthday}, 613--647, Proc.\ Sympos.\ Pure Math.\ \textbf{76}, Part 2, Amer. Math. Soc., Providence, RI, 2007.

\bibitem{K84} S.\ Kotani, Ljapunov indices determine absolutely continuous spectra of stationary random one-dimensional Schr\"odinger operators, in \textit{Stochastic Analysis} (\textit{Katata/Kyoto, 1982}), North Holland, Amsterdam (1984), 225--247.

\bibitem{K89} S.\ Kotani, Jacobi matrices with random potentials taking finitely many values, \textit{Rev.\ Math.\ Phys.} {\bf 1} (1989), 129--133.

\bibitem{K97} S.\ Kotani, Generalized Floquet theory for stationary Schr\"odinger operators in one dimension, \textit{Chaos Solitons Fractals} \textbf{8} (1997), 1817--1854.

\bibitem{K08} S.\ Kotani, KdV flow on generalized reflectionless potentials, \textit{Zh.\ Mat.\ Fiz.\ Anal.\ Geom.}\ \textbf{4} (2008), 490--528.

\bibitem{K14} S.\ Kotani, On limit set of KdV flow: An extension of Remling theorem, preprint (arXiv:1304.6785).

\bibitem{KK88} S.\ Kotani, M.\ Krishna, Almost periodicity of some random potentials, \textit{J.\ Funct.\ Anal.}\ \textbf{78} (1988), 390--405.

\bibitem{L87} B.\ Levitan, \textit{Inverse Sturm-Liouville Problems}, VSP, Zeist, 1987.

\bibitem{M86} N.\ Minami, An extension of Kotani's theorem to random generalized Sturm-Liouville operators, \textit{Commun.\ Math.\ Phys.}\ \textbf{103} (1986), 387--402.

\bibitem{PT88} L.\ Pastur, V.\ Tkachenko, Spectral theory of a class of one-dimensional Schr\"odinger operators with limit-periodic potentials, \textit{Trudy Moskov.\ Mat.\ Obshch.} \textbf{51} (1988), 114--168.

\bibitem{PVY09} F.\  Peherstorfer, A.\ Volberg, P.\ Yuditskii, CMV matrices with asymptotically constant coefficients. Szeg\H{o}-Blaschke class, scattering theory, \textit{J.\ Funct.\ Anal.}\ \textbf{256} (2009), 2157--2210.

\bibitem{PY06} F.\  Peherstorfer, P.\ Yuditskii, Almost periodic Verblunsky coefficients and reproducing kernels on Riemann surfaces, \textit{J.\ Approx.\ Theory} \textbf{139} (2006), 91--106.

\bibitem{PR09} A.\ Poltoratski, C.\ Remling, Reflectionless Herglotz functions and Jacobi matrices, \textit{Commun.\ Math.\ Phys.}\ \textbf{288} (2009), 1007--1021.

\bibitem{Pom} Ch.\ Pommerenke, On the Green's function of Fuchsian groups, \textit{Ann.\ Acad.\ Sci.\ Fenn.}\ \textbf{2} (1976), 409--427.

\bibitem{S83} B.\ Simon, Kotani theory for one-dimensional stochastic Jacobi matrices, \textit{Commun.\ Math.\ Phys.}\ \textbf{89} (1983), 227--234.

\bibitem{S05} B.\ Simon, \textit{Orthogonal Polynomials on the Unit Circle. Part 1. Classical Theory}, American Mathematical Society Colloquium Publications \textbf{54}, Part 1, American Mathematical Society, Providence, RI, 2005.

\bibitem{S05b} B.\ Simon, \textit{Orthogonal Polynomials on the Unit Circle. Part 2. Spectral Theory}, American Mathematical Society Colloquium Publications \textbf{54}, Part 2, American Mathematical Society, Providence, RI, 2005.

\bibitem{S07} B.\ Simon, Equilibrium measures and capacities in spectral theory, \textit{Inverse Probl.\ Imaging} \textbf{1} (2007), 713--772.

\bibitem{SY95} M.\ Sodin, P.\ Yuditskii, Almost periodic Sturm-Liouville operators with Cantor homogeneous spectrum, \textit{Comment.\ Math.\ Helv.}\ \textbf{70} (1995), 639--658.

\bibitem{T62} E.\ Titchmarsh, \textit{Eigenfunction Expansions Associated With Second-Order Differential Equations, Part I.}, Second Edition, Clarendon Press, Oxford 1962.

\bibitem{VY14} A.\ Volberg, P.\ Yuditskii, Kotani-Last problem and Hardy spaces on surfaces of Widom type, to appear in \textit{Invent.\ Math.}\ (arXiv:1210.7069).

\end{thebibliography}
\end{document}